\newtheorem{theorem}{Theorem}
\newtheorem{proposition}{Proposition}
\newtheorem{lemma}{Lemma}
\newtheorem{definition}{Definition}
\newtheorem{corollary}{Corollary}
\newtheorem*{theoremA}{Theorem A}
\newtheorem*{theoremB}{Theorem B}
\DeclareMathOperator{\Pic}{Pic}
\DeclareMathOperator{\id}{id}
\DeclareMathOperator{\rk}{rank}
\DeclareMathOperator{\codim}{codim}
\DeclareMathOperator{\im}{im}
\DeclareMathOperator{\coker}{coker}
\DeclareMathOperator{\Tor}{Tor}
\DeclareMathOperator{\Spec}{Spec}
\DeclareMathOperator{\Ext}{Ext}
\DeclareMathOperator{\Hom}{Hom}
\newtheorem*{subject}{2000 Mathematics Subject Classification}
\newtheorem*{keywords}{Keywords}
\theoremstyle{remark}
\newtheorem{notation}{Notation}
\newtheorem{remark}{Remark}
\author{Marc Coppens\footnote{KU  Leuven, Department of Mathematics, Section of Algebra,
Celestijnenlaan 200B bus 2400 B-3001 Leuven and Technologiecampus Geel, department Elektrotechniek (ESAT),Kleinhoefstraat 4, B-2440 Geel Belgium; email: marc.coppens@kuleuven.be.}}
\title{A picture of the irreducible components of $W^r_d(C)$ for a general $k$-gonal curve $C$ }
\date{}
\begin{document}
\maketitle \noindent

\begin{abstract}
Based on results on Hurwitz-Brill-Noether theory obtained by H. Larson we give a picture of the irreducible components of $W^r_d(C)$ for a general $k$-gonal curve of genus $g$.
This picture starts from irreducible components of $W^r_d(C)$ restricted to an open subset of $\Pic (C)$ satisfying Brill-Noether theory as in the case of a general curve of genus $g$.
We obtain some degeneracy loci associated to a morphism of locally-free sheaves on them of the expected dimension.
All the irreducible components of the schemes $W^r_d(C)$ are translates of their closures in $\Pic (C)$.
We complete the proof that the schemes $W^r_d(C)$ are generically smooth in case $C$ is a general $k$-gonal curve (claimed but not completely proved before).
We obtain some results on the tangent spaces to the splitting degeneracy loci for an arbitrary $k$-gonal curve and we obtain some new smoothness results in case $C$ is a general $k$-gonal curve.
\end{abstract}

\begin{subject}
14H51
\end{subject}

\begin{keywords}
Hurwitz-Brill-Noether theory, splitting degeneracy loci, special divisors, Brill-Noether varieties, gonality, smoothness
\end{keywords}

\section{Introduction}\label{section1}

Let $C$ be a smooth curve of genus $g$ defined over an algebraically closed field $K$ and let $f:C \rightarrow \mathbb{P}^1$ be a morphism on degree $k$.
Let $M$ be the invertible sheaf on $C$ associated to $f$.

In \cite{ref7} Hannah Larson introduces so-called splitting loci $\Sigma_{\overrightarrow {e}}(C,f) \subset \Pic^d(C)$ (see Definition \ref{defs.l}) which gives a refinement of the stratification of $\Pic^d(C)$ by the sets $W^r_d(C) \setminus W^{r+1}_d(C)$ using the morphism $f$.
She also introduces associated splitting degeneracy loci $\overline{\Sigma}_{\overrightarrow{e}}(C,f)$ (see Definition \ref{defs.d.l}) containing $\Sigma_{\overrightarrow{e}}(C,f)$ as an open subset and being a closed subset of $\Pic (C)$.

In case $f$ is general she obtains results on so-called Hurwitz-Brill-Noether theory (see Theorem \ref{theorem1}).
Associated to $\overrightarrow {e}$ there is a number $u(\overrightarrow {e})$ (see Formula \ref{vgl.u}) and it is proved $\Sigma _{\overrightarrow {e}}(C,f)$ is not empty if and only if $u(\overrightarrow {e}) \leq g$ and in case $u(\overrightarrow {e})\leq g$ then $\dim (\Sigma _{\overrightarrow {e}}(C,f))=g-u(\overrightarrow {e})$.
It implies that in case $f$ is general one has $\overline {\Sigma}_{\overrightarrow {e}}(C,f)$ is the closure of $\Sigma_{\overrightarrow {e}}(C,f)$.
Also in case $u(\overrightarrow {e}) \leq g$ it is proved the set $\Sigma _{\overrightarrow {e}}(C,f)$ is smooth.
In particular, in case $C$ is a general $k$-gonal curve, one gets a description of the irreducible components of $W^r_d(C)$ as being certain splitting degeneracy loci $\overline{\Sigma}_{\overrightarrow {e}}(C,f)$ with $\overrightarrow {e}$ being of balanced plus balanced type (see Definition \ref{definition2} and Theorem \ref{theorem3}).

In this paper we introduce the open subspace $\Pic^d_f(C)$ of $\Pic^d(C)$ of line bundles free from $M$ (see Definition \ref{definition4}).
We consider the intersections $W^r_{d,f}(C)=W^r_d(C) \cap \Pic^d_f(C)$ and the results of Larson imply, in case $C$ is a general $k$-gonal curve, they satisfy the Brill-Noether Theory of a general smooth curve of genus $g$ in case $r \leq k-2$.
We show that all irreducible components of $W^r_d(C)$ are obtained from $W^r_{d,f}(C)$ in a natural way in case $C$ is a general $k$-gonal curve.

\begin{theoremA}
Let $C$ be a general $k$-gonal curve.
Then $W^r_{d,f}(C)$ is not empty if and only if $r \leq k-2$ and $\rho^r_d(g) \geq 0$ and if those conditions hold then $W^r_{d,f}(C)$ has dimension $\rho^r_d(g)$.
In such cases, on $W^r_{d,f}(C) \setminus W^{r+1}_{d,f}(C)$ there exists a morphism $u : F_1 \rightarrow F_2$ of locally free sheaves such that the associated degeneracy loci have the expected codimension.
Those degeneracy loci do measure the dimensions $h^0(C,L+M)$ for $L \in W^r_{d,f}(C) \setminus W^{r+1}_{d,f}(C)$.
The closures in $\Pic (C)$ of $W^r_{d,f}(C)$, those degeneracy loci and the translations of them by multiples of $M$ contain all irreducible components of all subsets $W^s_e(C)$ of all $\Pic^e(C)$.
\end{theoremA}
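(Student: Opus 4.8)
The plan is to read every locus off the splitting type of $f_*L$ and to feed Larson's dimension formula into each step. Writing $f_*L\cong\bigoplus_{i=1}^k\mathcal O(e_i)$ with $e_1\le\cdots\le e_k$ and using that $M=f^*\mathcal O(1)$ shifts the splitting type by $(1,\dots,1)$, the projection formula gives $h^0(C,L)=\sum_i\max(0,e_i+1)$, $h^0(C,L-M)=\sum_i\max(0,e_i)$ and $h^0(C,L+M)=\sum_i\max(0,e_i+2)$. The free-from-$M$ condition of Definition~\ref{definition4} reads $h^0(C,L-M)=0$, i.e. $e_k\le 0$, so that $L\in W^r_{d,f}(C)\setminus W^{r+1}_{d,f}(C)$ forces exactly $r+1$ of the $e_i$ to equal $0$ and the remaining $k-r-1$ to satisfy $e_i\le -1$. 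First I would record that at least one strictly negative entry is needed (equivalently $r\le k-2$; the value $r=k-1$ gives only the all-zero, nonspecial type ruled out by the definition), that such balanced types exist for every $r\le k-2$, and that among all admissible types the balanced ones minimise $u(\overrightarrow e)$ (Formula~\ref{vgl.u}). A short computation matching $u(\overrightarrow e)$ for the balanced free-from-$M$ type against the Brill--Noether number gives $g-u(\overrightarrow e)=\rho^r_d(g)$; feeding this into Theorem~\ref{theorem1} (nonempty iff $u(\overrightarrow e)\le g$, with $\dim\Sigma_{\overrightarrow e}(C,f)=g-u(\overrightarrow e)$) yields the nonemptiness criterion $r\le k-2$ and $\rho^r_d(g)\ge 0$ together with the dimension $\rho^r_d(g)$, the balanced stratum being dense and the unbalanced ones of strictly smaller dimension.

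On the open stratum the formula above specialises to $h^0(C,L+M)=2(r+1)+\#\{i:e_i=-1\}$, so the jumps of $h^0(C,L+M)$ are governed by the single invariant $\#\{i:e_i=-1\}$. I would obtain the required $u\colon F_1\to F_2$ as the restriction to $W^r_{d,f}(C)\setminus W^{r+1}_{d,f}(C)$ of the bundle map on $\Pic(C)$ used to define the splitting degeneracy loci in Definition~\ref{defs.d.l}; equivalently one may relativise the multiplication map $H^0(C,L)\otimes H^0(C,M)\to H^0(C,L+M)$, realising the cohomology as the kernel and cokernel of a two-term complex of locally free sheaves in the usual way, whose corank at $L$ is exactly $\#\{i:e_i=-1\}$. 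Because the degeneracy loci of this map coincide with the intersections of the splitting loci $\Sigma_{\overrightarrow e}(C,f)$ for the sub-balanced types with $\Pic^d_f(C)$, their dimension is again computed by $g-u(\overrightarrow e)$ via Theorem~\ref{theorem1}; checking that this value equals the expected one settles both that the degeneracy loci have the expected codimension and that they measure $h^0(C,L+M)$.

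For the final and most important assertion I would exploit that translation by $M$ acts on splitting types by the global shift $\overrightarrow e\mapsto\overrightarrow e+(1,\dots,1)$, so that $\Sigma_{\overrightarrow e+(1,\dots,1)}(C,f)=\Sigma_{\overrightarrow e}(C,f)+M$, and that the window $e_k=0$ --- where every $W^r_{d,f}(C)$ with $r\ge 0$ lives --- is a fundamental domain for this shift: every $N$ with $h^0(N)>0$ has a unique $n$ with $N-nM$ of splitting type having $e_k=0$. By Larson's classification (Theorem~\ref{theorem3}) each irreducible component of each $W^s_e(C)$ is a splitting degeneracy locus $\overline\Sigma_{\overrightarrow e}(C,f)$ with $\overrightarrow e$ of balanced-plus-balanced type (Definition~\ref{definition2}), and for a general $f$ this locus is the closure of $\Sigma_{\overrightarrow e}(C,f)$. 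Shifting each such $\overrightarrow e$ into the window $e_k=0$ reduces the statement to the combinatorial claim that a balanced-plus-balanced type with $e_k=0$ is either the generic type of some $W^r_{d,f}(C)$ --- the balanced block sitting at level $0$, its complement balanced below --- or the special type carried by one of the degeneracy loci of $u$, obtained by pushing the complementary block up until extra entries reach $-1$. Matching the two balanced blocks of Definition~\ref{definition2} with the level-$0$ block and the level-$(-1)$ block read off from the splitting type then shows that the closures of the $W^r_{d,f}(C)$, the degeneracy loci, and their $M$-translates exhaust the components.

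The hard part will be this last identification, carried out at the level of schemes rather than sets: one must verify that Larson's balanced-plus-balanced types and the types ``balanced free from $M$, possibly degenerated so that further entries equal $-1$'' describe the same finite list, that the associated closed subschemes agree as the stated closures and degeneracy loci (not merely set-theoretically), and that the dimension bookkeeping via $g-u(\overrightarrow e)$ stays consistent so that each degeneracy locus is a genuine component --- neither absorbed into a larger $\overline\Sigma_{\overrightarrow e}(C,f)$ nor omitted. Getting this matching exactly right, so that no component of any $W^s_e(C)$ is missed and none is produced spuriously, is where the main effort lies.
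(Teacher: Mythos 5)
Your outline follows the same route as the paper: Larson's Theorem \ref{theorem1} together with the classification in Theorem \ref{theorem3} give the nonemptiness criterion, the dimension $\rho^r_d(g)$, and the reduction of all components of all $W^s_e(C)$ to $M$-translates of the free-from-$M$ picture (this is the content of Proposition \ref{prop1}, the discussion of type III components, and Remark \ref{rem3}), while the morphism $u:F_1\to F_2$ is obtained by relativizing the multiplication map $H^0(C,L)\otimes H^0(C,M)\to H^0(C,L+M)$ as in Proposition \ref{prop2}. Two steps, however, are not as routine as you present them. First, your ``equivalently'' conflates two different bundle maps, and only one of them works. The presentation $\pi_*(\mathcal{P}_d(1)(D))\to\pi_*(\mathcal{P}_d(1)(D)\vert_D)$ underlying Definitions \ref{defs.d.l} and \ref{definition6} has the right degeneracy loci as sets, but the expected codimension of $\{h^0(L+M)\ge 2r+2+t\}$ computed from its ranks is $(2r+2+t)(g-d-k+2r+1+t)$, which is far larger than the true codimension; the statement of Theorem A would simply be false for that choice of $u$. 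One must first divide $\pi_*(\mathcal{L}\otimes p^*(M+D))$ by the image of the relativized multiplication map $q:\pi_*(\mathcal{L}\oplus\mathcal{L})\to\pi_*(\mathcal{L}\otimes p^*(M+D))$ and take $F_1$ to be the quotient $N$, of rank $d+k+\deg(D)-g+1-2(r+1)$. That $N$ is locally free is exactly where the restriction to $\Pic^d_f(C)$ enters: by the base point free pencil trick the kernel of $q(L)$ is $H^0(C,L-M)=0$, so Lemma \ref{lemma4} applies. Only with these ranks does the expected codimension become $t(g-d-k+t+2r+1)$.

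Second, ``checking that this value equals the expected one'' is the main computational content of the paper's Proposition \ref{prop2}, not a formality: one must prove $u(\overrightarrow{e'})-u(\overrightarrow{e})=t(g-d-k+t+2r+1)$ for $\overrightarrow{e}$ of type $B(0,b,y,r+1,0)$ and $\overrightarrow{e'}$ of type $B(0,b',y',t,r+1)$ with $\overrightarrow{e'}-1\le\overrightarrow{e}$, which requires using Formula \ref{vgl1} for both splitting types to eliminate $b,b',y,y'$ and a careful expansion of the two values of $u(\cdot)$; one also needs the observation that any $L\in U$ with $h^0(L+M)\ge 2r+2+t$ automatically has splitting type $\le\overrightarrow{e'}-1$, so that the degeneracy locus really is $\overline{\Sigma}_{\overrightarrow{e'}-1}(C,f)\cap U$ and Theorem \ref{theorem1} can be invoked for its dimension. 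Your final ``window $e_k=0$'' normalization is a correct reformulation of the paper's trichotomy of Definition \ref{definition3} and of Remark \ref{rem3} (a type with $v=0$ shifts to a generic free-from-$M$ type, a type with $v>0$ shifts to a degeneracy-locus type one step below the window), so once the two items above are supplied your argument coincides with the paper's.
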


All statements in this theorem are contained in \cite{ref7}, except the description with the morphism $u$ of locally free sheaves (see Proposition \ref{prop2}).
On the other hand, this description starting with $W^r_{d,f}(C)$ gives a more natural description and shows in a more direct way how the other irreducible components show up.
It is remarkable that all irreducible components are obtained from special behaviour by adding $M$ only once (see also Remark \ref{rem3}).
This theorem also makes it natural to split up the irreducible components of $W^r_d(C)$ in case $C$ is a general $k$-gonal curve into three types (see Definition \ref{definition3}).

In \cite{ref7} it is also claimed that for a general $k$-gonal curve $C$ the schemes $W^r_d(C)$ are generically reduced (i.e. they have no multiple components).
However, as explained in Remark \ref{rem5}, the proof of that statement is not complete.
In this paper we finish the proof of that statement.

\begin{theoremB}
Let $C$ be a general $k$-gonal curve.
The schemes $W^r_d(C)$ are generically reduced.
\end{theoremB}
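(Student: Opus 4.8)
The plan is to establish generic reducedness through a tangent space computation at a general closed point of each irreducible component. Since $K$ is algebraically closed, hence perfect, it suffices to exhibit in each irreducible component $Z$ of $W^r_d(C)$ a closed point $L$ at which $\dim_K T_L W^r_d(C) = \dim_L W^r_d(C)$; such an $L$ is then a regular, hence smooth, point, and as smoothness is an open condition this forces $W^r_d(C)$ to be reduced at the generic point of $Z$. By Theorem A together with Theorem \ref{theorem3}, every such $Z$ is a translate by a multiple of $M$ of the closure of a splitting (degeneracy) locus $\overline{\Sigma}_{\overrightarrow{e}}(C,f)$ of balanced plus balanced type. Since $\otimes M$ is an automorphism of $\Pic(C)$ carrying the schemes $W^s_e(C)$ to one another, generic reducedness is preserved under translation, and we may reduce to the untranslated components, that is, to the first two of the three types recorded in Definition \ref{definition3}.

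First I would fix $L$ general in $Z$, so that $h^0(C,L)=r+1$ and $L$ lies in the open smooth stratum $\Sigma_{\overrightarrow{e}}(C,f)$, and invoke the classical identification of the tangent space to the determinantal scheme $W^r_d(C)$, namely $T_L W^r_d(C) = (\im \mu_0)^{\perp} \subseteq H^1(C,\mathcal{O}_C) = T_L \Pic^d(C)$, where $\mu_0 : H^0(C,L) \otimes H^0(C,K_C-L) \to H^0(C,K_C)$ is the cup product (Petri) map. Thus $\dim_K T_L W^r_d(C) = g - \rk \mu_0$, while $\dim_L Z = g - u(\overrightarrow{e})$. Because the tangent space dimension is always at least the local dimension, one automatically has $\rk \mu_0 \le u(\overrightarrow{e})$, so the whole statement reduces to the reverse inequality $\rk \mu_0 \ge u(\overrightarrow{e})$ at the general point $L$ of each component, equivalently to the upper bound $\dim_K \ker \mu_0 \le (r+1)\,h^1(C,L) - u(\overrightarrow{e})$.

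To prove this rank bound I would compute $\mu_0$ in terms of the map $f$. Writing $E = f_* L = \bigoplus_i \mathcal{O}_{\mathbb{P}^1}(e_i)$, relative duality and the projection formula identify $H^0(C,L)$, $H^0(C,K_C-L)$ and the cup product with the cohomology of the split bundle $E$ and of $f_*\mathcal{O}_C$ on $\mathbb{P}^1$, so that $\ker \mu_0$ decomposes according to the summands $\mathcal{O}(e_i)$ and the scrollar invariants of $f$. The gonality pencil $M$ produces, via the base-point-free pencil trick applied to $|M|$, an unavoidable subspace of $\ker \mu_0$; the content of the bound is that for a \emph{general} $k$-gonal curve this subspace, together with the part forced by the splitting type, is the whole kernel. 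This is the Hurwitz--Brill--Noether analogue of the Gieseker--Petri theorem, and I would reach it through the tangent space analysis of the splitting degeneracy loci valid for an \emph{arbitrary} $k$-gonal curve, which gives the inequality $\dim_K T_L \overline{\Sigma}_{\overrightarrow{e}}(C,f) \ge g - u(\overrightarrow{e})$ in general, combined with a specialization to a curve on which $\ker \mu_0$ attains its minimal dimension, forcing equality when $C$ is general.

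For components of the first type --- translates of closures of $W^r_{d,f}(C)$, which obey ordinary Brill--Noether theory --- this recovers Petri-type behaviour away from $M$. For components arising as degeneracy loci of the morphism $u : F_1 \to F_2$ of Theorem A, I would instead use that these loci have the expected codimension (Proposition \ref{prop2}) together with the local theory of determinantal schemes: a degeneracy locus of expected codimension attached to a sufficiently general bundle map is generically reduced, which again reduces to a tangent space transversality statement at a general point where $\rk u$ is exactly the prescribed value. The main obstacle is the kernel bound of the previous paragraph: unlike the classical situation, $\mu_0$ is never injective for a $k$-gonal curve, so one cannot simply invoke Gieseker--Petri, and the difficulty is to show that the generality of $C$ forces $\ker \mu_0$ to be no larger than the contribution dictated by the splitting type $\overrightarrow{e}$ and by the pencil $M$. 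Controlling this excess kernel uniformly over all balanced plus balanced types $\overrightarrow{e}$, and in particular in the cases left open in \cite{ref7} (see Remark \ref{rem5}), is the crux of the argument.
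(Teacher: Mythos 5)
Your reduction to a tangent-space count at a general point of each component, and the identification $T_LW^r_d(C)=(\im\mu_0)^{\perp}$ with the Petri map $\mu_0$, is the right frame and matches the paper's. But there are two genuine gaps. First, the opening reduction is based on a false premise: tensoring by $M$ is an isomorphism $\Pic^d(C)\rightarrow\Pic^{d+k}(C)$ of varieties, but it does \emph{not} carry the scheme $W^r_d(C)$ to a scheme $W^{s}_{d+k}(C)$ --- the translate $W^r_d(C)+M$ is only set-theoretically contained in some $W^{s}_{d+k}(C)$, and the two are cut out by different Fitting ideals (this is exactly the subtlety of Remark \ref{rem4}). So generic reducedness is not ``preserved under translation'' for free, and type III components cannot be dismissed at the outset; the paper has to prove an extra equality of tangent spaces under twisting (Proposition \ref{prop6} and Corollary \ref{cor2}, resting on the equality $\im(\mu_1)=\im(\mu_2)$ of images of twisted Petri-type maps) precisely to handle this.

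Second, and more seriously, the crux --- the bound $\rk\mu_0\geq u(\overrightarrow{e})$ at a general point of each component --- is not actually proved. You propose to get it from the inequality $\dim T_L\overline{\Sigma}_{\overrightarrow{e}}(C,f)\geq g-u(\overrightarrow{e})$ (which is automatic and points the wrong way) ``combined with a specialization to a curve on which $\ker\mu_0$ attains its minimal dimension,'' without exhibiting such a curve or computing anything there; you then concede this is the crux. The paper's mechanism is different and does not require a new degeneration: Larson's results already give $\dim T_L\overline{\Sigma}_{\overrightarrow{e}}(C,f)=g-u(\overrightarrow{e})$ (Proposition \ref{prop3}), where $T_L\overline{\Sigma}_{\overrightarrow{e}}(C,f)=\bigl(\sum_n\im\mu_n\bigr)^{\perp}$ is cut out by \emph{all} the twisted Petri maps $\mu_n$ of $L+nM$; what remains is to show that at a general point of a component the single image $\im\mu_0$ already equals $\sum_n\im\mu_n$. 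For type I this is the inclusion $\im\mu_1\subset\im\mu_2$ of Proposition \ref{prop4} (a base-point-free pencil trick computation), and for type II one needs in addition the $\mathbb{P}^1$-side argument of Lemma \ref{lemma6} and Proposition \ref{prop5}, analyzing the possible splitting types of the rank-$2k$ extension $\mathcal{E}_v$ to show that surjectivity of $H^0(\mathcal{E}_v)\rightarrow H^0(E)$ forces surjectivity of $H^0(\mathcal{E}_v(-1))\rightarrow H^0(E(-1))$. Neither of these steps, nor a workable substitute, appears in your proposal; the appeal to generic reducedness of degeneracy loci of expected codimension also does not apply, since the Fitting-ideal scheme structure on $W^r_d(C)$ is not the degeneracy-locus structure of the map $u:F_1\rightarrow F_2$ of Proposition \ref{prop2}.
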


In \cite{ref6} one defines a scheme structure on the splitting degeneracy loci $\overline{\Sigma}_{\overrightarrow{e}}(C,f)$.
As noticed to me by H. Larson the smoothness results from \cite{ref7}, together with earlier results from \cite{ref13}, imply $\Sigma_{\overrightarrow{e}}(C,f)$ is a smooth scheme if $f$ is general.
However, in case $\overline{\Sigma}_{\overrightarrow{e}}(C,f)$ is an irreducible component of $W^r_d(C)$ then from the definition of the scheme structure of $\overline{\Sigma}_{\overrightarrow{e}}(C,f)$ we only know that it is a closed subscheme of $W^r_d(C)$.
As a matter of fact, as a scheme $\overline{\Sigma}_{\overrightarrow{e}}(C,f)$ is an intersection of schemes defined by certain Fitting ideals, while $W^r_d(C)$ as a scheme is defined by only one of those Fitting ideals (see Definitions \ref{definition5} and \ref{definition6}).

For any $f$ we prove the tangent spaces to $\overline{\Sigma}_{\overrightarrow{e}}(C,f)$ at a point $L \in \Sigma_{\overrightarrow {e}}(C,f)$ is determined by a restricted number of Fitting ideals (see Proposition \ref{prop4}).
Using this we conclude that in case $C$ is a general $k$-gonal curve then irreducible components of $W^r_d(C)$ of type I are not multiple components (see Corollary \ref{cor1}).
In order to prove that irreducible components of $W^r_d(C)$ of type II are not multiple components, we first prove some results from \cite{ref13} mentioned above in a slightly different way (see Lemma \ref{lemma6} and Proposition \ref{prop5}).
Finally in order to prove that for a general $k$-gonal curve the irreducible components of type III of $W^r_d(C)$ are not multiple components (see Corollary \ref{cor2}), in general we prove some more equalities of tangent spaces to the schemes defined by the Fitting ideals (see Proposition \ref{prop6}) for arbitrary $f$.
Those Propositions \ref{prop4} and \ref{prop6} on the tangent spaces of schemes defined by the Fitting ideals might be useful in the study of linear systems on special types of $k$-gonal curves.

We already mentioned that from \cite{ref7} it follows the schemes $\overline{\Sigma}_{\overrightarrow {e}}(C,f)$ are smooth along $\Sigma_{\overrightarrow{e}}(C,f)$ in case $f$ is general.
As an application of the methods used in this paper we find some more splitting loci contained in the smooth locus of $\overline{\Sigma}_{\overrightarrow{e}}(C,f)$ in case this is a component of type I of some $W^r_d(C)$.
In a forthcoming paper I hope to be able to extend those results.

In Section \ref{section2} we recall the main definitions and results from \cite{ref7} (and a part of \cite{ref6}).
In Section \ref{section3} we prove Theorem A.
In Section \ref{section4} we prove Theorem B.
In Section \ref{section5} we apply the methods of this paper to give the new smoothness result on $\overline{\Sigma}_{\overrightarrow{e}}(C,f)$.

\subsection{Some notations}

Since $\Pic (C)$ is an abelian group using tensor products of line bundles we use the additive notation.
In particular we write $L_1 + L_2$ to denote the tensor product $L_1 \otimes _{\mathcal{O}_C} L_2$, $-L$ to indicate the dual line bundle $\Hom (\mathcal{O}_C,L)$, $nL$ with $n \in \mathbb{Z}$ to indicate the $n$-th tensor power of $L$ in case $n>0$ and of the $-n$-th tensor power of $-L$ in case $n<0$ (if the notation would occur for $n=0$ then it means the structure sheaf $\mathcal{O}_C$).
In case $D$ is a divisor on $C$ then we also use such notations replacing $\mathcal{O}_C(D)$ by $D$.
We use $K$ to denote the canonical sheaf on $C$.
We write $h^0(L)$ (resp. $h^1(L)$) to denote the dimension of $H^0(C,L)$ (resp. $H^1(C,L)$).
In case $W$ is a subset of $\Pic^d(C)$ then we write $W+nM \subset \Pic^{d+kn}(C)$ to denote the subset $\{L+nM : M \in W \}$.
In case $C=\mathbb{P}^1$ and $\mathcal{F}$ is a sheaf on $\mathbb{P}^1$ and $n \in \mathbb{Z}$, then we write $\mathcal{F} (n)$ to denote the sheaf $\mathcal{F} \otimes \mathcal{O}_{\mathbb{P}^1}(n)$.

In case $\mathcal{F}$ is a sheaf on a scheme $T$ of finite type over $K$ and $x$ is a closed point of $T$ then as usual we write $\mathcal{F}_x$ to denote the stalk of $\mathcal{F}$ at $x$.
We write $K(x) \cong K$ to denote the quotient $\mathcal{O}_{X,x} / \mathcal{M}_{X,x}$.
We call $\mathcal{F}_x \otimes _{\mathcal{O}_{X,x}} K(x)$ the fibre of $\mathcal{F}$ at $x$ and we denote it by $\mathcal{F}(x)$.
In case $u : \mathcal{F}_1 \rightarrow \mathcal{F}_2$ is a morphism of sheaves on $T$ then we write $u(x)$ to the denote the induced morphism $\mathcal{F}_1(x) \rightarrow \mathcal{F}_2(x)$.

\section{Definitions from Hurwitz-Brill-Noether theory}\label{section2}

Let $C$ be a smooth projective curve of genus $g$ defined over an algebraically closed field $K$.
Let $f : C \rightarrow \mathbb{P}^1$ be a fixed morphism of degree $k$.
Let $M$ be the line bundle of degree $k$ on $C$ corresponding to $f$.

Let $L$ be an invertible sheaf on $C$ of degree $d$.
From Grauert's Theorem (see e.g. \cite{ref1}, Chapter III, Corollary 12.9) it follows $E := f_*(L)$ is a vectorbundle of degree $k$ on $\mathbb{P}^1$.
From a theorem of Grothendieck (see e.g. \cite{ref2}, Chapter I, Theorem 2.1.1) it follows $E$ splits as a direct sum of line bundles on $\mathbb{P}^1$.
This means there exist a sequence of integers $e_1 \leq e_2 \leq \cdots \leq e_k$ such that $E \cong \mathcal{O}_{\mathbb{P}^1}(e_1) \oplus \mathcal{O}_{\mathbb{P}^1}(e_2) \oplus \cdots \oplus \mathcal{O}_{\mathbb{P}^1}(e_k)$.
We write $\overrightarrow{e}$ to denote such sequence and $\mathcal{O}(\overrightarrow{e})$ to denote that direct sum.
We call $\overrightarrow{e}$ the splitting type of $L$ with respect to $f$ (and often we just say the splitting type of $L$).
Such a sequence of integers without referring to some invertible sheaf is called a splitting type.

Applying the Grotendieck -Riemann-Roch Theorem (see e.g. \cite{ref3}, Theorem 15.2) to the morphism $f$ and the invertible sheaf $L$ on $C$ we obtain
\begin{equation}\label{vgl1}
\sum _{i=1}^k e_i = d-g+1-k \text { .}
\end{equation}
From the projection formula (see e.g. \cite{ref1}, Chapter 2, Exercise 5.1(d)) it follows that for each $n \in \mathbb{Z}$ we have
\[
f_*(L +nM)=\mathcal{O}(\overrightarrow {e}+n)
\]
where we write $\overrightarrow {e}+n$ to the denote the splitting type $(e_1+n, \cdots ,e_k+n)$.
This implies for each $n \in \mathbb{Z}$ we have
\begin{equation}\label{vgl2}
h^0(L +nM)=\sum_{i=1}^k \max \{ 0, e_i+n+1 \} \text { .}
\end{equation}

Associated to the curve $C$ we have the Brill-Noether schemes $W^r_d(C)$.
As sets (meaning with their reduced scheme structure) those are defined by
\[
W^r_d(C)=\{ L \in \Pic^d(C) : h^0(L)\geq r+1 \} \text { .}
\]
Given the morphism $f$ this is refined in \cite{ref7} using the so-called splitting loci.

\begin{definition}\label{defs.l}
Associated to a splitting type $\overrightarrow {e}$ we define the Brill-Noether splitting locus
\[
\Sigma _{\overrightarrow {e}}(C,f) =\{ L \in \Pic^d(C) : f_*(L) \cong \mathcal{O}(\overrightarrow {e}) \} \text{  .}
\]
\end{definition}

Using Equation \ref{vgl2} and the uppersemicontinuity of $h^0(L_t)$ (see Notation \ref{not1}) for a family of line bundles on $C$ (see e.g. \cite{ref1}, Chapter III, Theorem 12.8) one proves the following.
\begin{lemma}\label{lemma1}
Assume $L \in \Pic^d(C)$ with $L \in \overline{\Sigma _{\overrightarrow {e}}(C,f)}$ (the Zariski closure) then $f_*(L)\cong \mathcal{O}(\overrightarrow {e}')$ for some $\overrightarrow {e}' \leq \overrightarrow {e}$.
\end{lemma}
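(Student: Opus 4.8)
The plan is to read off the splitting type of the limit $L$ directly from the numbers $h^0(L+nM)$, using Equation \ref{vgl2} for both the generic and the special bundle, and to compare these numbers via upper-semicontinuity of $h^0$ in a family. By Grothendieck's theorem $f_*(L)$ splits, so there is a well-defined splitting type $\overrightarrow{e}'$ with $f_*(L) \cong \mathcal{O}(\overrightarrow{e}')$; the content of the lemma is precisely that $\overrightarrow{e}' \leq \overrightarrow{e}$.

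First I would realize $L$ as a limit of bundles of splitting type $\overrightarrow{e}$. Since $L$ lies in $\overline{\Sigma_{\overrightarrow{e}}(C,f)}$, I choose a smooth curve $B$ with a morphism $B \rightarrow \Pic^d(C)$ whose image meets $\Sigma_{\overrightarrow{e}}(C,f)$ in a dense subset and passes through $L$, say at $b_0 \in B$. Pulling back a Poincaré line bundle on $C \times \Pic^d(C)$ (which exists since $C$ has a $K$-rational point) yields a family $\mathcal{L}$ on $C \times B$, flat over $B$, whose fibre over $b_0$ is $L$ and whose fibre over a general $b \in B$ has splitting type $\overrightarrow{e}$. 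For each fixed $n \in \mathbb{Z}$ I then twist by $M$: tensoring $\mathcal{L}$ with the pullback of $M$ along the first projection gives a flat family with fibres $\mathcal{L}_b + nM$, and upper-semicontinuity of $b \mapsto h^0(C, \mathcal{L}_b + nM)$ gives
\[
h^0(L + nM) \;\geq\; h^0(\mathcal{L}_b + nM) \qquad \text{for } b \in B \text{ general.}
\]
Applying Equation \ref{vgl2} to the left-hand side with splitting type $\overrightarrow{e}'$ and to the right-hand side with splitting type $\overrightarrow{e}$ turns this into
\[
\sum_{i=1}^k \max\{0, e'_i + n + 1\} \;\geq\; \sum_{i=1}^k \max\{0, e_i + n + 1\},
\]
valid for every $n \in \mathbb{Z}$.

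Finally I would conclude $\overrightarrow{e}' \leq \overrightarrow{e}$. If the partial order on splitting types is defined by exactly these twisted $h^0$-inequalities, the previous display \emph{is} the conclusion. If instead $\leq$ is taken as the dominance order on the partial sums $e_1 + \cdots + e_j$, one checks that the system of inequalities above, together with $\sum_i e'_i = \sum_i e_i$ (forced by Equation \ref{vgl1}, since $\deg L = d$ is fixed for both the generic and the special member), is equivalent to the inequalities $e'_1 + \cdots + e'_j \leq e_1 + \cdots + e_j$ on partial sums. I expect this last identification to be the only genuine obstacle: the family construction and the semicontinuity step are routine, and the heart of the matter is the elementary but careful bookkeeping that encodes $\overrightarrow{e}' \leq \overrightarrow{e}$ in the piecewise-linear functions $n \mapsto \sum_i \max\{0, e_i + n + 1\}$.
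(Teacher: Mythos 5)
Your proposal is correct and follows essentially the same route as the paper, which proves the lemma in one line by combining Equation \ref{vgl2} with upper-semicontinuity of $h^0$ in a family. The combinatorial equivalence you defer does hold: writing $h_{\overrightarrow{e}}(n)=\sum_{i}\max\{0,e_i+n+1\}=\max_{0\le m\le k}\bigl(\sum_{i=k-m+1}^{k}e_i+m(n+1)\bigr)$ and, dually, $\sum_{i=k-m+1}^{k}e_i=\min_{n}\bigl(h_{\overrightarrow{e}}(n)-m(n+1)\bigr)$ (the minimum being attained at $n=-e_{k-m+1}-1$), one sees that the inequalities $h_{\overrightarrow{e}'}(n)\ge h_{\overrightarrow{e}}(n)$ for all $n$ are equivalent, given equal total degree, to the dominance inequalities of Definition \ref{definition1}.
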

In this lemma we use the following definition.
\begin{definition}\label{definition1}
Let $\overrightarrow {e}$,$\overrightarrow {e}'$ be two splitting types satisfying Formula \ref{vgl1} then
\[
\overrightarrow {e}' \leq \overrightarrow {e} \Leftrightarrow 1 \leq l \leq k : \sum_{i=1}^l e'_i \leq \sum_{i=1}^l e_i \text { .}
\]
\end{definition}
Now Lemma \ref{lemma1} motivates the following definition from \cite{ref7}.

\begin{definition}\label{defs.d.l}
Associated to a splitting type $\overrightarrow{e}$ we define the splitting degeneracy locus
\[
\overline {\Sigma}_{\overrightarrow {e}}(C,f)= \bigcup_{\overrightarrow {e'} \leq \overrightarrow {e}} \Sigma _{\overrightarrow {e}'}(C,f) \text { .}
\]
\end{definition}

The splitting degeneracy loci are closed subsets of $\Pic^d(C)$.
In general those closed subsets can be larger than the closure $\overline {\Sigma _{\overrightarrow {e}}(C,f)}$.

\begin{notation}\label{not1}
Now let $T$ be a scheme of finite type over $K$ and let $\mathcal{L}$ be a family of invertible sheaves of degree $d$ on $C$ over $T$ (hence $\mathcal{L}$ is an invertible sheaf on $C\times T$).
For a closed point $t \in T$ we write $L_t$ to denote $\mathcal{L} \vert _{C \times \{ t\}}$ and $E_t = f_*(L_t)$.
Let $f_T = f \times \id_T : C \times T \rightarrow \mathbb{P}^1 \times T$.
Although $T$ need not be integral, from the fact that $f$ is a finite morphism, it still follows $E_T:=(f_T)_*(\mathcal{L})$ is a locally free sheaf of rank $k$ on $\mathbb{P}^1 \times T$ and for each closed point $(y,t) \in \mathbb{P}^1 \times T$ we have $E_T (y,t) \rightarrow \mathcal{L} \vert _{f_T^{-1}(y,t)}$ is an isomorphism (here $f_T^{-1}(y,t)$ is considered as a scheme).
In case $\mathcal{L}$ is equal to the Poincar\'e sheaf $\mathcal{P}_d$ on $C \times \Pic^d(C)$ then we write $E_d$ instead of the locally free sheaf $E_{\Pic^d(C)}$ on $\mathbb{P}^1 \times \Pic^d(C)$.
\end{notation}

\begin{lemma}\label{lemma2}
Using Notation \ref{not1}, for a closed point $t \in T$ we have $E_T \vert _{\mathbb{P}^1 \times \{ t \}} \cong E_t$.
In particular we obtain
\[
\Sigma _{\overrightarrow {e}}(C,f) = \{ t \in \Pic^d(C) : E_d\vert _{\mathbb{P}^1 \times \{ t \}} \cong \mathcal{O}(\overrightarrow {e}) \} \text { .}
\]
\end{lemma}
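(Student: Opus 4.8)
The plan is to read $E_T|_{\mathbb{P}^1 \times \{t\}} \cong E_t$ off as a base change statement for pushforward along $f$, and then to obtain the description of $\Sigma_{\overrightarrow{e}}(C,f)$ by specializing to the universal family. The essential structural fact I would use is that $f$, being a degree $k$ morphism of smooth curves, is finite and hence affine, so $f_T = f \times \id_T$ is affine as well. For an affine morphism the pushforward of a quasi-coherent sheaf commutes with \emph{arbitrary} base change, with no flatness assumption on the base change map; this is precisely what is needed, because restricting to the fibre $\mathbb{P}^1 \times \{t\}$ is a closed immersion and is not flat in general.

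I would then set up the Cartesian square whose vertical arrows are $f$ and $f_T$ and whose horizontal arrows are the closed immersions $i_t : C \times \{t\} \hookrightarrow C \times T$ and $j_t : \mathbb{P}^1 \times \{t\} \hookrightarrow \mathbb{P}^1 \times T$ coming from the point $t$. The base change morphism $j_t^* (f_T)_* \mathcal{L} \to f_* i_t^* \mathcal{L}$ is exactly the map $E_T|_{\mathbb{P}^1 \times \{t\}} \to f_*(L_t) = E_t$, and I want it to be an isomorphism. To check this I would argue locally on $\mathbb{P}^1$: take an affine open $U = \Spec(A)$ with $f^{-1}(U) = \Spec(B)$ affine and $B$ a finite $A$-algebra, write $T = \Spec(R)$ locally, and let $N$ be the $(B \otimes_K R)$-module attached to $\mathcal{L}$. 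Then $(f_T)_* \mathcal{L}$ is $N$ viewed as an $(A \otimes_K R)$-module, and restriction to the fibre tensors with $K$ over $R$, giving the $A$-module $N \otimes_R K$; on the other hand $L_t$ is $N \otimes_R K$ as a $B$-module, and $f_*$ merely restricts scalars along $A \to B$, giving the same $A$-module. Since both $A$-module structures are induced by the single algebra map $A \to B$, the local identifications are canonical and glue to a global isomorphism over $\mathbb{P}^1$.

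Finally, applying the first part to $T = \Pic^d(C)$ and $\mathcal{L} = \mathcal{P}_d$ gives $E_T = E_d$ and identifies $L_t$ with the line bundle $L$ represented by $t$, so the isomorphism $E_d|_{\mathbb{P}^1 \times \{t\}} \cong f_*(L)$ turns the defining condition $f_*(L) \cong \mathcal{O}(\overrightarrow{e})$ of $\Sigma_{\overrightarrow{e}}(C,f)$ into $E_d|_{\mathbb{P}^1 \times \{t\}} \cong \mathcal{O}(\overrightarrow{e})$, which is the asserted equality of sets. I do not anticipate a serious difficulty, since once finiteness of $f$ is used the argument is formal; the single point demanding care --- and the only place a hasty proof could fail --- is that $j_t$ is not flat, so one must appeal to affineness of $f$ (equivalently, to the exactness of $f_*$ and the vanishing of the higher direct images $R^i (f_T)_* \mathcal{L}$ for $i > 0$) rather than to ordinary flat base change.
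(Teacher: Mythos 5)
Your proof is correct, but it reaches the isomorphism $E_T\vert_{\mathbb{P}^1\times\{t\}}\cong E_t$ by a different mechanism than the paper. The paper also starts from the canonical base change map $u_t : E_T\vert_{\mathbb{P}^1\times\{t\}}\to E_t$, but it verifies that $u_t$ is an isomorphism pointwise: for each $y\in\mathbb{P}^1$ the fibre isomorphism $E_T(y,t)\to\mathcal{L}\vert_{f_T^{-1}(y,t)}$ recorded in Notation \ref{not1} factors through $u_t(y,t)$ and the corresponding isomorphism for $E_t$, so $u_t$ is an isomorphism on every fibre; since both sheaves are locally free of the same rank on the integral scheme $\mathbb{P}^1\times\{t\}$, Lemma \ref{lemma4} upgrades this to an isomorphism of sheaves. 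You instead prove the stronger and more general statement that pushforward along the affine morphism $f_T$ commutes with arbitrary base change, by the explicit computation $N\otimes_{A\otimes R}A \cong N\otimes_R K$ on affine charts. Your route is more self-contained and does not need local freeness of $E_t$ or $E_T$, nor the fibrewise isomorphisms of Notation \ref{not1}, nor Lemma \ref{lemma4}; the paper's route is shorter on the page because it reuses exactly that machinery, which it has already set up and needs elsewhere. You are also right to flag that the closed immersion $j_t$ is not flat, so ordinary flat base change does not apply and affineness of $f$ is the essential input --- this is the same non-formal point the paper's fibrewise argument is implicitly exploiting. The deduction of the set-theoretic description of $\Sigma_{\overrightarrow{e}}(C,f)$ from the first part is identical in both treatments.
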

\begin{proof}
From e.g. \cite{ref1}, Chapter III, Remark 9.3.1, we obtain a natural map $u_t : E_T \vert _{\mathbb{P}^1 \times \{ t \}} \rightarrow E_t$.
For each $y \in \mathbb{P}^1$ the isomorphism $E_T (y,t) =(E_T \vert _{\mathbb{P}^1 \times \{ t \}})(y,t) \rightarrow \mathcal{L}\vert _{f_T^{-1}(y,t)}$ factors as a composition of $u_t(y,t)$ and the isomorphism $E_t(y,t) \rightarrow \mathcal{L}\vert _{f_T^{-1}(y,t)}$.
This shows $u_t(y,t)$ is an isomorphism for all $y \in \mathbb{P}^1$.
Since $\mathbb{P}^1 \times \{ t \}$ is integral and $E_T \vert _{\mathbb{P}^1 \times \{ t \}}$ and $E_t$ are locally free sheaves this implies $u_t$ is an isomophism (see further on Lemma \ref{lemma4}).
\end{proof}

For a splitting sequence $\overrightarrow {e}$ we define the number
\begin{equation}\label{vgl.u}
u(\overrightarrow {e})=\sum _{1 \leq i < j \leq k} \max \{0, e_j-e_i-1 \} \text { .}
\end{equation}

Using Lemma \ref{lemma2} then from the second part of Theorem 14.7 in \cite {ref4} it follows that, in case $\Sigma _{\overrightarrow {e}} (C,f) \neq \emptyset$,
\begin{equation}\label{vgl3}
\dim (\Sigma _{\overrightarrow {e} } (C,f)) \geq g-u(\overrightarrow{e}) \text { .}
\end{equation}
The proof of that theorem is only written down in case $k=2$, but for the most important cases in this paper we are going to find an alternative proof.
Also, using Formula \ref{vgl2} the degeneracy splitting loci can be described as follows.
\begin{equation}\label{vgl4}
\begin{split}
\overline{\Sigma}_{\overrightarrow {e}}(C,f) = \{ t \in \Pic^d(C) : h^0(E_t \otimes \mathcal{O}_{\mathbb{P}^1}(n)) \geq \sum _{i=1}^k \max \{0, e_i+n+1 \} \text { for all } n \in \mathbb{Z} \} \\
=\{ t \in \Pic^d(C) : h^1 (E_t \otimes \mathcal{O}_{\mathbb{P}^1}(n)) \geq \sum _{i=1}^k \max \{ 0, -e_i-n-1 \} \text { for all } n \in \mathbb{Z} \} \text { .}
\end{split}
\end{equation}

In so-called Hurwitz-Brill-Noether theory one studies those degeneracy splitting loci in case $f$ is general (so in case $k$ is small enough with respect to $g$, one considers general $k$-gonal curves).
In \cite {ref7} one obtains the following results.

\begin{theorem}\label{theorem1}
Let $f$ be a general morphism.
Then $\Sigma _{\overrightarrow{e}} (C,f)$ is non-empty if and only is $u(\overrightarrow {e})\leq g$ and if $u(\overrightarrow {e}) \leq g$ then $\Sigma _{\overrightarrow {e}}(C,f)$ is smooth of dimension $g-u(\overrightarrow {e})$.
\end{theorem}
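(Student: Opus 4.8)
The plan is to reduce the statement to a single transversality (maximal-rank) property of a Kodaira--Spencer-type map together with an existence statement, both of which I would attack by degenerating the cover $f$ to a special one. I would begin from the description in Lemma \ref{lemma2}: the set $\Sigma_{\overrightarrow{e}}(C,f)$ is exactly the locus in $\Pic^d(C)$ where the fibres of the locally free sheaf $E_d=(f_{\Pic})_*\mathcal{P}_d$ on $\mathbb{P}^1\times\Pic^d(C)$ acquire splitting type $\overrightarrow{e}$. Over $\mathbb{P}^1$ every bundle splits, so the governing invariant is the codimension of the locus of bundles of type $\overrightarrow{e}$ in a versal deformation, and a direct computation gives $h^1(\mathbb{P}^1,\mathcal{E}nd\,\mathcal{O}(\overrightarrow{e}))=\sum_{i<j}\max\{0,e_j-e_i-1\}=u(\overrightarrow{e})$. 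Thus $u(\overrightarrow{e})$ is the expected codimension of $\Sigma_{\overrightarrow{e}}(C,f)$ inside the $g$-dimensional variety $\Pic^d(C)$, and Formula \ref{vgl3} already supplies the lower bound $\dim\Sigma_{\overrightarrow{e}}(C,f)\geq g-u(\overrightarrow{e})$ whenever the locus is non-empty. It remains to show, for general $f$, that the locus is non-empty precisely when $u(\overrightarrow{e})\leq g$ and that in that case it is smooth of dimension exactly $g-u(\overrightarrow{e})$.

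The next step is a tangent space computation at a point $L\in\Sigma_{\overrightarrow{e}}(C,f)$. Writing $E=f_*L$, a first-order deformation of $L$ over $K[\epsilon]$ induces a first-order deformation of the $\mathcal{O}_{\mathbb{P}^1}$-module $E$, yielding a map $\mu_L:H^1(C,\mathcal{O}_C)\to\Ext^1_{\mathbb{P}^1}(E,E)=H^1(\mathbb{P}^1,\mathcal{E}nd\,E)$ out of the tangent space $T_L\Pic^d(C)=H^1(C,\mathcal{O}_C)$. Since $f$ is finite one has $H^1(\mathbb{P}^1,f_*\mathcal{O}_C)=H^1(C,\mathcal{O}_C)$, and $\mu_L$ is identified with $H^1$ of the natural inclusion of sheaves of algebras $f_*\mathcal{O}_C\hookrightarrow\mathcal{E}nd(f_*L)$, whose target has dimension $u(\overrightarrow{e})$. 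The Zariski tangent space to $\Sigma_{\overrightarrow{e}}(C,f)$ at $L$ is the kernel of $\mu_L$. Consequently $\Sigma_{\overrightarrow{e}}(C,f)$ is smooth of dimension exactly $g-u(\overrightarrow{e})$ at $L$ if and only if $\mu_L$ is surjective; moreover, if $\mu_L$ is surjective while $u(\overrightarrow{e})>g$, its kernel would have negative dimension, so no such $L$ can exist. In this way the emptiness half of the equivalence is forced by the same surjectivity that yields smoothness.

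The hard part is to prove that for a general morphism $f$ the map $\mu_L$ is surjective at every point of $\Sigma_{\overrightarrow{e}}(C,f)$ — equivalently, via the long exact sequence of $f_*\mathcal{O}_C\hookrightarrow\mathcal{E}nd(f_*L)$ and $H^2(\mathbb{P}^1,-)=0$, that $H^1(\mathbb{P}^1,\mathcal{E}nd(f_*L)/f_*\mathcal{O}_C)=0$ — together with the matching existence statement. For both I would degenerate $f$ to a suitable cover of a chain of low-genus components, for which the pushforwards, the splitting types of candidate line bundles, and the map $\mu_L$ admit a combinatorial description. On such a special cover I would exhibit, whenever $u(\overrightarrow{e})\leq g$, a line bundle whose pushforward has type $\overrightarrow{e}$ and for which $\mu_L$ is surjective. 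Non-emptiness and surjectivity of $\mu_L$ are open conditions on the family of covers, so they propagate to the general $f$; semicontinuity of the fibre dimensions of $E_d$ combined with the lower bound of Formula \ref{vgl3} then pins the dimension to $g-u(\overrightarrow{e})$ and rules out the locus when $u(\overrightarrow{e})>g$.

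Assembling these, surjectivity of $\mu_L$ gives smoothness of exact dimension $g-u(\overrightarrow{e})$ in the range $u(\overrightarrow{e})\leq g$, the existence construction gives non-emptiness in that range, and the tangent space dimension count gives emptiness when $u(\overrightarrow{e})>g$, completing the equivalence. I expect the combinatorial control of $\mu_L$ on the degenerate cover — the Petri-type vanishing $H^1(\mathbb{P}^1,\mathcal{E}nd(f_*L)/f_*\mathcal{O}_C)=0$ — to be the genuine obstacle, with the dimension bound, the tangent space identification, and the semicontinuity arguments being comparatively formal.
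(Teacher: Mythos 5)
First, note that this paper does not prove Theorem \ref{theorem1}; it is quoted from \cite{ref7}, and the paper only recalls the skeleton of Larson's argument in Notation \ref{not3} and Remark \ref{rem5}. Your proposal reproduces that skeleton faithfully: the identification of the relevant map with $H^1$ of $f_*\mathcal{O}_C\hookrightarrow\mathcal{E}nd(f_*L)$, the count $\dim\Ext^1_{\mathbb{P}^1}(E,E)=u(\overrightarrow{e})$, the lower bound from Formula \ref{vgl3}, and the deduction that surjectivity of $\mu_L$ forces both smoothness of dimension $g-u(\overrightarrow{e})$ and emptiness when $u(\overrightarrow{e})>g$. These formal reductions are correct. (One small caution: for the set-theoretic statement you only need the containment $T_L\Sigma_{\overrightarrow{e}}(C,f)\subseteq\ker\mu_L$, which is what Remark \ref{rem5} uses; the equality you assert is the stronger scheme-theoretic fact corresponding to Lemma 4.1 of \cite{ref13}, reproved here via Lemma \ref{lemma6}, and is not needed at this stage.)

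The genuine gap is that the two statements carrying all of the content of the theorem --- non-emptiness of $\Sigma_{\overrightarrow{e}}(C,f)$ when $u(\overrightarrow{e})\leq g$, and surjectivity of $\mu_L$ at \emph{every} point of $\Sigma_{\overrightarrow{e}}(C,f)$ for general $f$ --- are left as intentions ("I would degenerate\dots I would exhibit\dots") with no construction of the degenerate cover, no description of the line bundles on it, and no verification of the Petri-type vanishing $H^1(\mathbb{P}^1,\mathcal{E}nd(f_*L)/f_*\mathcal{O}_C)=0$. Moreover, the propagation step as you state it is logically insufficient: openness of surjectivity lets you propagate surjectivity \emph{at a single point} $(f_0,L_0)$ to nearby pairs, but the theorem requires surjectivity at all $L$ in the splitting locus of the general $f$. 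The locus of pairs $(f,L)$ where $\mu_L$ fails to be surjective is closed in the universal splitting locus over the Hurwitz space, and one must show it does not dominate the Hurwitz space; this needs a dimension or properness argument over the entire degenerate fibre (which is what the inductive analysis in \cite{ref7} supplies), not just exhibiting one good point. As it stands, the proposal reduces the theorem to its hard part rather than proving it.
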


By a general morphism we mean it corresponds to a general point of the Hurwitz space of degree $k$ and genus $g$.
Because of the knowledge of the irreducibility of that Hurwitz space, in \cite{ref7} (and also in \cite{ref6}) it is assumed that the characteristic of $K$ is equal to zero or it does not divide $k$.
Recently in \cite{refirrH} it is proved that this Hurwitz space is irreducible without any restriction on the characteristic of $K$.
Notice that up to now we only consider sets, but see also Proposition \ref{prop3}.
Using tropical geometry it is proved that, if $\Sigma _{\overrightarrow {e}}(C,f)$ is not empty then the dimension of it is at most $g-u(\overrightarrow {e})$ in \cite{ref12}.
It also follows from Theorem \ref{theorem1} that $\overline{\Sigma}_{\overrightarrow {e}}(C,f)=\overline{\Sigma _{\overrightarrow {e}}(C,f)}$ (i.e; the degeneracy splitting locus is the closure of the splitting locus in case $f$ is general).
In \cite{ref6} one also proves the following result.

\begin{theorem}\label{theorem2}
Let $f$ be a general morphism.
In case $u(\overrightarrow {e}) <g$ then $\overline{\Sigma}_{\overrightarrow {e}}(C,f)$ is irreducible.
\end{theorem}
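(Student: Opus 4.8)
The plan is to prove irreducibility by producing a connected resolving variety and pushing forward, with connectedness coming from a Fulton--Lazarsfeld type theorem for degeneracy loci; the numerical threshold $u(\overrightarrow{e})<g$ will turn out to be exactly the connectedness bound. First I would record the consequences of Theorem \ref{theorem1}: for general $f$ with $u(\overrightarrow{e})<g$ the locus $\Sigma_{\overrightarrow{e}}(C,f)$ is nonempty and smooth of pure dimension $g-u(\overrightarrow{e})\geq 1$, and $\overline{\Sigma}_{\overrightarrow{e}}(C,f)=\overline{\Sigma_{\overrightarrow{e}}(C,f)}$ is its closure. Since the closure of an irreducible set is irreducible and $\Sigma_{\overrightarrow{e}}(C,f)$ is dense in $\overline{\Sigma}_{\overrightarrow{e}}(C,f)$, it suffices to exhibit an irreducible variety that surjects onto $\overline{\Sigma}_{\overrightarrow{e}}(C,f)$; I will obtain such a variety as a smooth connected auxiliary space.

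To realize the locus I would work on $\mathbb{P}^1\times\Pic^d(C)$ with the locally free sheaf $E_d$ of Notation \ref{not1}, and let $\pi$ denote the projection to $\Pic^d(C)$. For each integer $n$ one represents the derived pushforward of the twist of $E_d$ by the pullback of $\mathcal{O}_{\mathbb{P}^1}(n)$ by a two-term complex of vector bundles $\phi_n:\mathcal{A}_n\to\mathcal{B}_n$ on $\Pic^d(C)$ whose cokernel computes $R^1\pi_*$; by cohomology and base change the fibre rank of $\phi_n$ at $t$ is governed by $h^1(E_t\otimes\mathcal{O}_{\mathbb{P}^1}(n))$. By the cohomological description in Equation \ref{vgl4}, the set $\overline{\Sigma}_{\overrightarrow{e}}(C,f)$ is then the intersection, over the finitely many corner values $n=-e_i-1$, of the degeneracy loci $\{\,t:\rk\phi_n(t)\leq r_n\,\}$, an intersection whose expected codimension is $u(\overrightarrow{e})$. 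I would then assemble this filtration data into a single zero/degeneracy locus $\widetilde{\Sigma}$ on a relative flag (or Quot-type) bundle over $\Pic^d(C)$ parametrising the subsheaf filtrations of the fibres $E_t$ that witness the splitting type $\overrightarrow{e}$, so that $\widetilde{\Sigma}\to\overline{\Sigma}_{\overrightarrow{e}}(C,f)$ is surjective and, for general $f$, $\widetilde{\Sigma}$ is smooth.

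For connectedness I would invoke the Fulton--Lazarsfeld connectedness theorem. The base is the abelian variety $\Pic^d(C)$, hence connected, and the required inequality is precisely $\dim\Pic^d(C)-\codim=g-u(\overrightarrow{e})\geq 1$, i.e.\ the hypothesis $u(\overrightarrow{e})<g$; this is the exact analogue of the classical threshold $\rho\geq 1$ for connectedness of $W^r_d(C)$. The positivity hypothesis amounts to ampleness of the twisting bundle $\mathcal{A}_n^{\vee}\otimes\mathcal{B}_n$ (and its analogue on the flag bundle), which I would establish by a Chern-class computation tying the sheaves $\pi_*$ of twists of $E_d$ back to the Poincar\'e sheaf and the theta polarisation, exactly as in the proof that $W^r_d(C)$ is connected when $\rho\geq 1$. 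Connectedness of $\widetilde{\Sigma}$ then follows, and a smooth connected variety is irreducible, so its image $\overline{\Sigma}_{\overrightarrow{e}}(C,f)$ is irreducible.

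I expect the main obstacle to be twofold. The first is the positivity step: verifying ampleness of $\mathcal{A}_n^{\vee}\otimes\mathcal{B}_n$ demands a careful identification of the construction with the theta class, and this is where the arithmetic of the scrollar invariants $\overrightarrow{e}$ enters. The second, and harder, difficulty is structural: $\overline{\Sigma}_{\overrightarrow{e}}(C,f)$ is genuinely an intersection of several degeneracy conditions — reflected by its scheme structure being cut out by several Fitting ideals — and not a single degeneracy locus, while the connectedness theorem applies cleanly only to a single map and yields connectedness of a \emph{closed} locus. This forces the passage to the smooth resolution $\widetilde{\Sigma}$, and the technical heart of the argument is proving that $\widetilde{\Sigma}$ is smooth, of the expected dimension, and surjects onto $\overline{\Sigma}_{\overrightarrow{e}}(C,f)$ for general $f$; without such a resolution, connectedness of the closed locus would not by itself upgrade to irreducibility of the smooth stratum $\Sigma_{\overrightarrow{e}}(C,f)$.
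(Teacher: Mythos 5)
First, a point of comparison: the paper does not prove Theorem~\ref{theorem2} at all — it is imported verbatim from \cite{ref6} — so there is no in-paper argument to measure your proposal against. Judged on its own terms, your strategy (resolve, prove the resolution is smooth and connected, push forward) is the right general shape, and your observation that each individual condition $h^1(E_t(n))\geq\cdots$ is, via the projection formula, just a translate $W^{r(n)}_{d+nk}(C)-nM$ of a classical Brill--Noether locus is correct and useful: it gives the Fulton--Lazarsfeld positivity for each single $n$ essentially for free. But the proposal has a genuine gap exactly at the step you defer as ``the technical heart.''

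The gap is that $\overline{\Sigma}_{\overrightarrow{e}}(C,f)$ is an intersection of several degeneracy loci for \emph{different} maps $\phi_n$, and the Fulton--Lazarsfeld connectedness theorem does not pass to such intersections. Your proposed fix — assemble the conditions into a single zero locus $\widetilde{\Sigma}$ on a relative flag or Quot bundle — runs into the standard obstruction: the tautological sub/quotient bundles on a Grassmannian or flag bundle are never ample along the fibres, so the bundle whose section cuts out $\widetilde{\Sigma}$ cannot be ample, and the ``zero locus of a section of an ample bundle of corank $\geq 1$'' connectedness theorem simply does not apply upstairs. Fulton and Lazarsfeld get around this for a \emph{single} degeneracy locus by a bespoke Lefschetz-type argument tailored to one map $\phi:E\to F$; there is no off-the-shelf analogue for a flagged or multi-map degeneracy condition, and producing one would be a substantial theorem in its own right rather than a routine verification. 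A second, related gap: even granting connectedness of $\widetilde{\Sigma}$, irreducibility of its image requires $\widetilde{\Sigma}$ to be smooth (or at least normal/unibranch) \emph{everywhere}, including over the boundary strata $\Sigma_{\overrightarrow{e}'}(C,f)$ with $\overrightarrow{e}'<\overrightarrow{e}$, where the resolution has positive-dimensional fibres; Theorem~\ref{theorem1} only gives smoothness of the open stratum $\Sigma_{\overrightarrow{e}}(C,f)$ and says nothing about the resolution over the boundary. The proof in \cite{ref6} supplies exactly these missing ingredients by working with the universal splitting locus over the Hurwitz space and a degeneration argument, rather than by positivity of degeneracy loci on the fixed abelian variety $\Pic^d(C)$.
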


\section{The irreducible components of $W^r_d(C)$ as sets}\label{section3}

From the previous theorems \ref{theorem1} and \ref{theorem2} one obtains in \cite{ref7} a description of the irreducible components of the spaces $W^r_d(C)$ in case $C$ is a general $k$-gonal curve.

\begin{definition}\label{definition2}
A splitting type $\overline{e}$ is called of balanced plus balanced type if it is of the form
\[
(\overbrace{-b-1, \cdots , -b-1} ^x, \overbrace{-b, \cdots, -b}^y, \overbrace {a, \cdots, a}^u, \overbrace {a+1, \cdots , a+1}^v)
\]
for some $b \geq 1$, $a \geq 0$, $y>0$, $u>0$ ($x$ and/or $v$ might be 0).
Shortly we say $\overrightarrow {e}$ is of type $B(a,b,y,u,v)$ (this is different from the notation used in \cite{ref7}).
\end{definition}

Notice that for $L \in \Sigma _{\overrightarrow {e}}(C,f)$ with $\overrightarrow {e}$ of type $B(a,b,y,u,v)$ it follows from Equation \ref{vgl2} that
\begin{equation}\label{vgl5}
h^0(L)=u(a+1)+v(a+2) \text { .}
\end{equation}

The following statement is another formulation for most part of Corollary 1.3 in \cite{ref7}.

\begin{theorem}\label{theorem3}
The irreducible components of $W^r_d(C)$ for a general $k$-gonal curve $C$ are the irreducible components of $\overline{\Sigma}_{\overrightarrow {e}}(C,f)$ with $\overrightarrow {e}$ satisfying equation \ref{vgl1} and being of type $B(a,b,y,u,v)$ with $u(a+1)+v(a+2)=r+1$ and $b \geq 2$ or $(b,v)=(1,0)$.
In case $u(\overrightarrow {e})=g$ then this a finite set of points, if $u(\overrightarrow {e})<g$ then this is irreducible.
\end{theorem}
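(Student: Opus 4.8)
The plan is to derive Theorem~\ref{theorem3} from the previously established structural results by reducing the question about irreducible components of $W^r_d(C)$ to the same question about the splitting loci $\Sigma_{\overrightarrow{e}}(C,f)$, and then invoking Theorems~\ref{theorem1} and~\ref{theorem2} to control the latter. First I would record the set-theoretic stratification: by Equation~\ref{vgl2}, every $L\in\Pic^d(C)$ has a well-defined splitting type $\overrightarrow{e}$ with $f_*(L)\cong\mathcal{O}(\overrightarrow{e})$, so that $\Pic^d(C)$ is the disjoint union of the splitting loci $\Sigma_{\overrightarrow{e}}(C,f)$ over all $\overrightarrow{e}$ satisfying Formula~\ref{vgl1}. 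The condition $L\in W^r_d(C)$, i.e. $h^0(L)\geq r+1$, depends only on the splitting type via $h^0(L)=\sum_{i=1}^k\max\{0,e_i+1\}$. Hence $W^r_d(C)$ is the union of those $\Sigma_{\overrightarrow{e}}(C,f)$ whose splitting type satisfies $\sum_{i=1}^k\max\{0,e_i+1\}\geq r+1$.

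Next I would use Theorem~\ref{theorem1} to identify which strata are nonempty and of maximal dimension. For a general $f$, the stratum $\Sigma_{\overrightarrow{e}}(C,f)$ is nonempty precisely when $u(\overrightarrow{e})\leq g$, and then it is smooth of dimension $g-u(\overrightarrow{e})$. Since an irreducible component of $W^r_d(C)$ must be the closure of some top-dimensional nonempty stratum contributing to $W^r_d(C)$, the combinatorial heart of the argument is to determine, among all splitting types with $h^0\geq r+1$, which ones minimize $u(\overrightarrow{e})$ subject to also being maximal with respect to the order of Definition~\ref{definition1} within their $h^0$-level set. The claim is that these minimizers are exactly the balanced plus balanced types $B(a,b,y,u,v)$ with $u(a+1)+v(a+2)=r+1$ and $b\geq 2$ or $(b,v)=(1,0)$; the equality $h^0=u(a+1)+v(a+2)$ is Equation~\ref{vgl5}. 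I would verify that balancing the two halves (the negative entries and the nonnegative entries separately) minimizes $u(\overrightarrow{e})$ among types with a prescribed value of $h^0$, using the convexity built into the definition of $u(\overrightarrow{e})$ as a sum of terms $\max\{0,e_j-e_i-1\}$. The side conditions $b\geq 2$ or $(b,v)=(1,0)$ are exactly what rule out types that are dominated (in the order $\leq$) by another admissible type of the same $h^0$, so that the corresponding $\Sigma_{\overrightarrow{e}}(C,f)$ lies in the closure of a larger stratum and does not give a genuine component.

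Having isolated the correct combinatorial family, I would then pass from strata to components. By Lemma~\ref{lemma1} and Definition~\ref{defs.d.l}, $\overline{\Sigma}_{\overrightarrow{e}}(C,f)$ is closed and, for general $f$, equals $\overline{\Sigma_{\overrightarrow{e}}(C,f)}$; by Theorem~\ref{theorem2} it is irreducible when $u(\overrightarrow{e})<g$, while when $u(\overrightarrow{e})=g$ the stratum has dimension $0$ and is a finite set of points. I would check that each such $\overline{\Sigma}_{\overrightarrow{e}}(C,f)$ of balanced plus balanced type is actually contained in $W^r_d(C)$ (using that $\overrightarrow{e}'\leq\overrightarrow{e}$ forces $h^0\geq r+1$ for the relevant types, again via Equation~\ref{vgl2}), that it is not contained in any other such locus, and conversely that $W^r_d(C)$ is covered by these loci, so no component is missed.

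\medskip

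The main obstacle I anticipate is the combinatorial optimization step: showing that balanced plus balanced types are precisely the maximal elements realizing a given $h^0$-value with minimal $u(\overrightarrow{e})$, and correctly extracting the boundary conditions $b\geq 2$ or $(b,v)=(1,0)$. This requires a careful analysis of how small perturbations of the splitting type (moving one unit of degree from one entry to another while preserving Formula~\ref{vgl1}) change both $h^0$ and $u(\overrightarrow{e})$, and of the dominance order $\leq$; the risk is overlooking a degenerate configuration that is either spuriously counted as a component or wrongly discarded. Everything downstream — nonemptiness, dimension, irreducibility — is then immediate from the cited theorems.
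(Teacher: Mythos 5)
The paper does not actually prove this statement: Theorem~\ref{theorem3} is presented as ``another formulation for most part of Corollary 1.3 in \cite{ref7}'' and is imported from Larson's work, with only the surrounding remarks (Definition~\ref{definition2}, Equation~\ref{vgl5}, and later Remark~\ref{rem3}) indicating how it follows from Theorems~\ref{theorem1} and~\ref{theorem2}. Your plan is the natural reconstruction of that derivation and its overall architecture is sound: stratify $\Pic^d(C)$ by splitting type, observe that $W^r_d(C)$ is the union of the strata with $\sum_i\max\{0,e_i+1\}\geq r+1$, use Theorem~\ref{theorem1} to see which strata are nonempty and that $\overline{\Sigma}_{\overrightarrow{e}}(C,f)=\overline{\Sigma_{\overrightarrow{e}}(C,f)}$ for general $f$, and use Theorem~\ref{theorem2} for irreducibility. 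Your identification of the side conditions ($b\geq 2$ or $(b,v)=(1,0)$) as precisely excluding types dominated by an admissible type with the same $h^0$ matches the mechanism spelled out in Remark~\ref{rem3} of the paper.

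Two points keep this from being a proof rather than a plan. First, the combinatorial heart --- that the maximal elements, under the dominance order of Definition~\ref{definition1}, of the set of types with $h^0\geq r+1$ and $u(\overrightarrow{e})\leq g$ are exactly the stated balanced plus balanced types --- is asserted, not carried out; you correctly flag it as the main obstacle, but without it the theorem is not established. Second, your framing of the selection criterion is imprecise in a way that could mislead the execution: a component of $W^r_d(C)$ need not come from a \emph{top-dimensional} stratum, and one should not ``minimize $u(\overrightarrow{e})$'' --- the components of $W^r_d(C)$ for a general $k$-gonal curve typically have several different dimensions (types I, II, III). The correct criterion is pure maximality in the dominance order among all admissible types (not within a fixed $h^0$-level set), after which one still needs the small extra argument that any maximal admissible type automatically has $h^0$ exactly $r+1$: a type with $h^0\geq r+2$ can always be strictly dominated by an admissible type obtained by lowering its largest entry and raising its smallest, which decreases $u(\overrightarrow{e})$ and drops $h^0$ by at most one. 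With these corrections your outline does reduce the theorem to a finite, checkable combinatorial verification, consistent with the argument of \cite{ref7} that the paper relies on.
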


Based on this theorem we distinguish between three types of irreducible components for the schemes $W^r_d(C)$ of a general $k$-gonal curve.

\begin{definition}\label{definition3}
Let $C$ be a general $k$-gonal curve and let $W$ be an irreducible of some $W^r_d(C)$ corresponding to a splitting sequence $\overrightarrow {e}$ of type $B(a,b,y,u,v)$.
\begin{enumerate}
\item $W$ is of type I if $v=0$ and $a=0$
\item $W$ is of type II if $v \neq 0$ and $a=0$
\item $W$ is of type III otherwise .
\end{enumerate}
\end{definition}

Suppose  $W=\overline{\Sigma}_{\overrightarrow {e}}(C,f)$ is of type I.
Then for $L \in \Sigma _{\overrightarrow {e}}(C,f)$ we have $r=u-1 \leq k-2$ and $h^0(L-M)=0$.
Motivated by this observation we introduce the following definition.

\begin{definition}\label{definition4}
We say $L \in \Pic^d(C)$ is free from $M$ in case $h^0(L-M)=0$.
\end {definition}

\begin{lemma}\label{lemma3}
If $L \in \Pic^d(C)$ with $h^1 (L) \neq 0$ is free from $M$ then $h^0(L) \leq k-1$.
\end{lemma}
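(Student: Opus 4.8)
The plan is to reduce everything to a count on the splitting type $\overrightarrow{e} = (e_1 \le \cdots \le e_k)$ of $L$ with respect to $f$, reading off both hypotheses and the desired conclusion from Equation \ref{vgl2} and its cohomological counterpart. First I would record that $E := f_*(L) \cong \mathcal{O}(\overrightarrow{e})$, so that Equation \ref{vgl2} gives $h^0(L+nM) = \sum_{i=1}^k \max\{0, e_i + n + 1\}$ for every $n \in \mathbb{Z}$, and that, since $f$ is finite (so $R^1 f_* = 0$), one has $h^1(L) = h^1(\mathbb{P}^1, E) = \sum_{i=1}^k \max\{0, -e_i - 1\}$, in accordance with the second line of Equation \ref{vgl4}. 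These two formulas are the only inputs I expect to need.

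Next I would translate the two hypotheses into inequalities on the $e_i$. Evaluating Equation \ref{vgl2} at $n = -1$ gives $h^0(L-M) = \sum_{i=1}^k \max\{0, e_i\}$, so by Definition \ref{definition4} the assumption that $L$ is free from $M$ is exactly the statement that $e_i \le 0$ for all $i$. On the other hand, $h^1(L) \neq 0$ forces $\max\{0, -e_i - 1\} > 0$ for some $i$, that is, $\mathcal{O}_{\mathbb{P}^1}(e_i)$ has nonvanishing $H^1$ for at least one index; equivalently there is an index $i_0$ with $e_{i_0} \le -2$, and in particular $e_{i_0} \neq 0$.

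Finally I would evaluate Equation \ref{vgl2} at $n = 0$. Because every $e_i \le 0$, each summand $\max\{0, e_i+1\}$ equals $1$ when $e_i = 0$ and vanishes otherwise, so $h^0(L) = \#\{i : e_i = 0\}$. Since the index $i_0$ found above satisfies $e_{i_0} \neq 0$, at most $k-1$ of the $e_i$ can be zero, whence $h^0(L) \le k-1$, as claimed.

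I do not expect a serious obstacle here: once both hypotheses are rephrased in terms of $\overrightarrow{e}$ the conclusion is immediate, and the only point requiring care is the bookkeeping that ``free from $M$'' pins all the $e_i$ at $\le 0$ while $h^1(L) \neq 0$ pushes at least one strictly below $-1$. As a sanity check, one can reprove the statement without the splitting type: choosing $D \in |M|$, the sequence $0 \to L - M \to L \to L|_D \to 0$ shows that $H^0(L) \hookrightarrow H^0(L|_D) \cong K^{k}$ once $h^0(L-M)=0$, so that $h^0(L)\le k$; and were this an isomorphism, the resulting $k$ sections would trivialise $f_*L$, forcing $h^1(L)=0$ and contradicting the hypothesis. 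This is the same computation in disguise, so I would present the splitting-type argument as the cleaner one.
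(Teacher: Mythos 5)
Your argument is correct, but it takes a genuinely different route from the paper. The paper stays on the curve: assuming $h^0(L)\geq k$, the base point free pencil trick (whose kernel $H^0(L-M)$ vanishes precisely because $L$ is free from $M$) gives $h^0(L+M)\geq 2h^0(L)\geq h^0(L)+\deg(M)$; since a divisor of $|M|$ can be chosen through any point $P\in C$, this forces $h^0(L+P)=h^0(L)+1$ for every $P$, which is only possible if $h^1(L)=0$ (a nonempty $|K-L|$ cannot have all of $C$ in its base locus) --- contradiction. You instead push everything down to $\mathbb{P}^1$ and read the three quantities off the splitting type: freeness from $M$ pins $e_i\leq 0$ for all $i$ via Equation \ref{vgl2} at $n=-1$, the hypothesis $h^1(L)\neq 0$ forces some $e_{i_0}\leq -2$, and then $h^0(L)=\#\{i: e_i=0\}\leq k-1$. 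Both proofs are short and complete; yours is more in the spirit of the splitting-type formalism the paper has already set up and in fact yields slightly finer information (it identifies exactly where the sections can live), while the paper's proof needs only the pencil trick and no decomposition of $f_*L$ --- it is closer in flavor to the restriction-sequence ``sanity check'' you sketch at the end than to your main argument. The one step you should justify explicitly is the identification $h^1(C,L)=h^1(\mathbb{P}^1,f_*L)$, which holds because $f$ is finite so $R^1f_*L=0$; the paper uses this silently in Equation \ref{vgl4} and spells it out in Lemma \ref{lemma5}, so a citation suffices.
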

\begin{proof}
Assume $h^0(L) \geq k$.
Because of the base point free pencil trick (see e.g. \cite{ref9}, p.126) we have $h^0(L+M) \geq 2h^0(L) \geq h^0(L)+\deg (M)$.
Then for each $P \in C$ we have $h^0(L+P)=h^0(L)+1$, this is only possible if $h^1(L)=0$.
\end{proof}

\begin{notation}\label{not2}
We write $\Pic^d_f(C)= \{ L \in \Pic^d(C) : L \text { is free from } M \}$ and we write $W^r_{d,f}(C)= \Pic^d_f(C) \cap W^r_d(C)$.
\end{notation}

\begin{proposition}\label{prop1}
Let $C$ be a general $k$-gonal curve and assume $r>d-g$ and $r \geq 1$.
The irreducible components of the schemes $W^r_d(C)$ of type I are the closures in $\Pic^d(C)$ of the irreducible components of $W^r_{d,f}(C)$.
\end{proposition}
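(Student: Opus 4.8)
The plan is to reduce the statement to a purely topological comparison and then supply the one non-formal input: that a component of $W^r_d(C)$ meets the open set $\Pic^d_f(C)$ exactly when it is of type I. First I would record that $\Pic^d_f(C)$ is open in $\Pic^d(C)$. Indeed $L \mapsto L-M$ is an isomorphism $\Pic^d(C) \to \Pic^{d-k}(C)$, and $h^0(L-M)$ is upper-semicontinuous, so $\Pic^d_f(C) = \Pic^d(C) \setminus (W^0_{d-k}(C)+M)$ is the complement of a closed set. Hence $W^r_{d,f}(C) = W^r_d(C) \cap \Pic^d_f(C)$ is open in $W^r_d(C)$. For an open subset $U$ of a variety $X$, the irreducible components of $U$ are precisely the nonempty intersections $Z \cap U$ with $Z$ a component of $X$, and since $Z \cap U$ is then open and dense in the irreducible set $Z$, its closure in $X$ is $Z$. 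Invoking Theorem \ref{theorem3} for the list of components $\overline{\Sigma}_{\overrightarrow{e}}(C,f)$ of $W^r_d(C)$, the Proposition reduces to the claim that $\overline{\Sigma}_{\overrightarrow{e}}(C,f) \cap \Pic^d_f(C) \neq \emptyset$ if and only if $\overrightarrow{e}$ is of type I in the sense of Definition \ref{definition3}.

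For the ``if'' direction, suppose $\overrightarrow{e}$ is of type I, that is of type $B(0,b,y,u,0)$, so every entry $e_i$ is $\leq 0$. For $L \in \Sigma_{\overrightarrow{e}}(C,f)$, Equation \ref{vgl2} with $n=-1$ gives $h^0(L-M) = \sum_{i=1}^k \max\{0,e_i\} = 0$, so $L$ is free from $M$ and $\Sigma_{\overrightarrow{e}}(C,f) \subseteq W^r_{d,f}(C)$. As $f$ is general, $\Sigma_{\overrightarrow{e}}(C,f)$ is a nonempty dense subset of $\overline{\Sigma}_{\overrightarrow{e}}(C,f)$ by Theorem \ref{theorem1}, so this component meets $\Pic^d_f(C)$.

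For the ``only if'' direction I would use the monotonicity of the largest splitting index under the order of Definition \ref{definition1}: if $\overrightarrow{e}' \leq \overrightarrow{e}$ and both satisfy Equation \ref{vgl1}, then the total sums agree, so $\sum_{i=1}^{k-1} e_i' \leq \sum_{i=1}^{k-1} e_i$ forces $e_k' \geq e_k$. Now if $\overrightarrow{e}$ is of type II or III, its top entry satisfies $e_k \geq 1$ (for type II because $v>0$ produces a top entry $a+1 = 1$; for type III because $a \geq 1$ makes the top entry $a$ or $a+1$ at least $1$). For any $L \in \overline{\Sigma}_{\overrightarrow{e}}(C,f)$, Lemma \ref{lemma1} gives $f_*(L) \cong \mathcal{O}(\overrightarrow{e}')$ with $\overrightarrow{e}' \leq \overrightarrow{e}$, whence $e_k' \geq e_k \geq 1$ and, again by Equation \ref{vgl2} with $n=-1$, $h^0(L-M) \geq \max\{0,e_k'\} \geq 1$. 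Thus no point of such a component is free from $M$, i.e. $\overline{\Sigma}_{\overrightarrow{e}}(C,f) \cap \Pic^d_f(C) = \emptyset$, completing the claim and hence the Proposition.

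I expect the main obstacle to be not a hard estimate but the care of the reduction itself: one must verify that $W^r_{d,f}(C)$ is genuinely open in $W^r_d(C)$ and that its components are the traces of the type I components, while the decisive ingredient is the elementary monotonicity $e_k' \geq e_k$, which is exactly what prevents a type II or III component from accidentally harbouring a point free from $M$. The hypotheses $r > d-g$ and $r \geq 1$ serve to place us in the regime where $W^r_d(C)$ is a proper subvariety of $\Pic^d(C)$ with component structure governed by Theorem \ref{theorem3}; in particular $r > d-g$ forces $h^1(L) \geq r-(d-g) \geq 1$ for every $L \in W^r_d(C)$, so that every such $L$ is special and Lemma \ref{lemma3} is available.
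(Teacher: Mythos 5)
Your proof is correct, and its first half (openness of $\Pic^d_f(C)$, the dictionary between irreducible components of an open subset and those components of the ambient space that meet it, and the computation $h^0(L-M)=\sum_i\max\{0,e_i'\}$ from Equation \ref{vgl2} showing that type I splitting loci lie in $\Pic^d_f(C)$) matches the paper's argument. Where you genuinely diverge is in the converse direction. The paper takes an irreducible component $Z$ of $W^r_{d,f}(C)$, passes to a component $W$ of $W^r_d(C)$ containing $\overline{Z}$, and argues by semicontinuity that a \emph{general} point of $W$ satisfies $h^0(L-M)=0$, which via Theorem \ref{theorem3} forces $W$ to be of type I. You instead prove the sharper dichotomy that a type II or III degeneracy locus is \emph{entirely} disjoint from $\Pic^d_f(C)$, using the elementary monotonicity that $\overrightarrow{e}'\leq\overrightarrow{e}$ together with equality of total degrees forces $e_k'\geq e_k\geq 1$, hence $h^0(L-M)\geq 1$ at every point of $\overline{\Sigma}_{\overrightarrow{e}}(C,f)$. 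Your version buys a cleaner classification (which components meet the open set is decided componentwise, with no appeal to genericity on $W$), at the cost of leaning slightly more on Definition \ref{defs.d.l} and the explicit list in Theorem \ref{theorem3}; the paper's version needs only the behaviour of a general point. One cosmetic remark: for the ``only if'' step you do not need Lemma \ref{lemma1} at all, since by Definition \ref{defs.d.l} every $L\in\overline{\Sigma}_{\overrightarrow{e}}(C,f)$ already has splitting type $\overrightarrow{e}'\leq\overrightarrow{e}$. Also, your closing observation about Lemma \ref{lemma3} is harmless but unused; the hypotheses $r>d-g$, $r\geq 1$ only ensure the setting of Theorem \ref{theorem3} applies.
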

\begin{proof}
Let $W \subset W^r_d(C)$ be an irreducible component of type I, so $W=\overline{ \Sigma}_{\overrightarrow {e}}(C,f)$ with $\overrightarrow {e}$ is of type $B(0,b,y,u,0)$.
Because of Formule \ref{vgl2}, if $L \in \Sigma_{\overrightarrow {e}}(C,f)$ then $h^0(L-M)=0$, hence $L \in W^r_{d,f}(C)$.
So $W \cap \Pic^d_f(C) \neq \emptyset$ and since $\Pic^d_f(C)$ is an open subset of $\Pic^d(C)$ it follows $W \cap \Pic^d_f$ is an irreducible subset of $W^r_{d,f}(C)$ and its closure in $\Pic^d(C)$ is equal to $W$.
Let $Z$ be the irreducible component of $W^r_{d,f}(C)$ containing $W \cap \Pic^d_f$. 
Its closure $\overline{Z}$ in $\Pic^d(C)$ is irreducible and satisfies $W \subset \overline{Z} \subset W^r_d(C)$.
It follows $W=\overline {Z}$ hence $Z=W \cap \Pic^d_f$.
This proves $W$ is the closure in $\Pic^d(C)$ of an irreducible component of $W^r_{d,f}$.

Let $Z$ be an irreducible component of $W^r_{d,f}(C)$, let $\overline {Z} \subset W^r_d(C)$ be its closure in $\Pic^d(C)$ and let $W$ be an irreducible component of $W^r_d(C)$ containing $\overline{Z}$.
For an element $L'$ of $Z$ we have $h^0(L'-M)=0$, hence for a general element $L$ of $W$ we also have $h^0(L-M)=0$.
This implies $W$ is an irreducible component of type I.
It follows $W \cap \Pic^d_f(C)$ is an irreducible subset of $W^r_{d,f}(C)$ containing $Z$, hence equal to $Z$.
Since the closure of $W \cap \Pic^d_f(C)$ in $\Pic^d (C)$ is equal to $W$ it follows $W=\overline{Z}$.
This proves $Z$ is the intersection of a component of type I of $W^r_d(C)$ with $\Pic^d_f(C)$.
\end{proof}

From Theorem \ref{theorem1} it follows that is $W \subset W^r_d(C)$ is an irreducible component of type I (and $C$ a general $k$-gonal curve) then $\dim (W)=\rho ^r_d(g)$.
From Theorem \ref{theorem2} it follows this component is unique (given $d$ and $r$) in case $\rho ^r_d(g)>0$.
Combining those remarks with Proposition \ref{prop1} we can conclude that the varieties $W^r_{d,f}(C)$ on a general $k$-gonal curve satisfy the classical Brill-Noether dimension and irreducibility statements in case $C$ is a general $k$-gonal curve.
The existence of components of type I satisfying the dimension statement was already proved in \cite{ref11}.
As it will be explained further on, those components of type I can be considered as the basic components for all other components taking into account the existence of $M$.

As a first remark concerning that claim, we consider irreducible components of type III.
Let $W=\overline{\Sigma}_{\overrightarrow {e}}(C,f)$ be a component of type III.
Then $\overrightarrow {e}=\overrightarrow {e'}+a$ with $\overrightarrow {e'}$ of type $B(0,b-a,y,u,v)$.
So $\overline{\Sigma}_{\overrightarrow {e'}}(C,f)$ is an irreducible component $W'$ of $W^{r-a(u+v)}_{d-ak}(C)$ of type I or II and we have $W=W'+aM$.
Therefore the irreducible components of type III can be considered as translations of the irreducible components of types I or II.

Up to now, this is a way to summarize the results on the irreducible components of $W^r_d(C)$ for a general $k$-gonal curve $C$ obtained in \cite{ref7}.
The following proposition gives a natural description of the irreducible components of type II not contained in \cite{ref7}.

\begin{proposition}\label{prop2}
Let $C$ be a general $k$-gonal curve and let $W$ be a component of $W^r_d(C)$ of type I.
On $U=(W \cap \Pic^d_f(C))\setminus W^{r+1}_d(C)$ there exists a map of locally free sheaves $u : F_1 \rightarrow F_2$ such that the degeneracy loci of $u$ have the expected codimension (and are empty if this expected codimension is larger than $\dim (W)$).
Let $U'$ be an irreducible component of such degeneracy locus different from $U$ and let $W'$ be its closure in $\Pic^d(C)$.
Then $W'+M \subset \Pic^{d+k}(C)$ is equal to a splitting degeneracy locus $\overline{\Sigma}_{\overrightarrow {e'}}(C)$ with $\overrightarrow {e'}$ of some type $B(0,b',y',u',r+1)$.
In particular in case $b' \geq 2$ (notation from Definition \ref{definition2}) then this is a component of type II.
All irreducible components of type II can be obtained in this way.
\end{proposition}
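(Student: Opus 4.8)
The plan is to realise $u$ as the connecting homomorphism computing $h^0(L+M)$ relatively over $U$, and then to translate its degeneracy loci by $M$ so that they become ordinary Brill--Noether loci in $\Pic^{d+k}(C)$, whose components are already described by Theorems \ref{theorem1} and \ref{theorem3}. Since $W$ is of type I we may write $\overrightarrow{e}=B(0,b,y,u,0)$, and Equation \ref{vgl5} gives $r+1=u$. For $L\in U$ we have $h^0(L)=u$ (as $L\in W^r_d(C)\setminus W^{r+1}_d(C)$) and $h^0(L-M)=0$; by Equation \ref{vgl2} this forces the splitting type of $L$ to have exactly $u$ entries equal to $0$ and all other entries $\le -1$, so that $h^0(L)=u$ and $h^1(L)=u-d+g-1$ are constant on $U$. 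Writing $\pi\colon\mathbb{P}^1\times U\to U$, cohomology and base change then show $\pi_*E_d$ and $R^1\pi_*E_d$ (Notation \ref{not1}) are locally free of ranks $u$ and $h^1(L)$. First I would tensor $0\to\mathcal{O}_{\mathbb{P}^1}\to\mathcal{O}_{\mathbb{P}^1}(1)\to\mathcal{O}_{\mathbb{P}^1}(1)|_q\to 0$ (for $q\in\mathbb{P}^1$) with $E_d$ and push forward; the resulting connecting map
\[
u\colon F_1:=(E_d\otimes\mathcal{O}_{\mathbb{P}^1}(1))|_{\{q\}\times U}\longrightarrow F_2:=R^1\pi_*E_d
\]
is a morphism of locally free sheaves of ranks $k$ and $h^1(L)$ satisfying $\dim\ker(u(L))=h^0(L+M)-u$ for all $L\in U$ (Equation \ref{vgl2} with $n=1$, base change).

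The rank-$\le\rho$ degeneracy locus of $u$ is then $\{L\in U:h^0(L+M)\ge u+(k-\rho)\}$. On $U$ we have $h^0(L+M)=2u+\#\{i:e_i=-1\}$, so with $j=k-\rho-u$ this is $Z_j=\{L\in U:h^0(L+M)\ge 2u+j\}$. The key step is to translate by $M$: since $f_*(L+M)=\mathcal{O}(\overrightarrow{e}+1)$, one gets $Z_j+M=W^{2u+j-1}_{d+k}(C)\cap(U+M)$. By Theorem \ref{theorem3} the components of $W^{2u+j-1}_{d+k}(C)$ are the $\overline{\Sigma}_{\overrightarrow{e'}}(C,f)$ with $\overrightarrow{e'}$ balanced plus balanced and $h^0=2u+j$; restricting to the open set $U+M$ forces, by Equation \ref{vgl2} (which computes $h^0(N-M)=v'$ and $h^0(N-2M)=0$ for $N$ in the translate), that $v'=u$ and $u'=j$, i.e. the unique type $\overrightarrow{e'_j}=B(0,b'_j,y'_j,j,u)$. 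By Theorem \ref{theorem1}, $\dim(Z_j+M)=g-u(\overrightarrow{e'_j})$, hence the codimension of $Z_j$ in $U$ (of dimension $g-u(\overrightarrow{e})$) equals $u(\overrightarrow{e'_j})-u(\overrightarrow{e})$; a finite computation with Equation \ref{vgl.u} shows this equals the expected codimension $(k-\rho)(\rk F_2-\rho)$, and that it exceeds $\dim W$ exactly when $u(\overrightarrow{e'_j})>g$, in which case $Z_j$ is empty by Theorem \ref{theorem1}. This yields the expected-codimension statement.

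For a component $U'\ne U$, its generic point lies in $\Sigma_{\overrightarrow{e'_j}-1}(C,f)$, whose open stratum is already contained in $U$ (its points are free from $M$ with $h^0=u$); hence $W'=\overline{\Sigma}_{\overrightarrow{e'_j}-1}(C,f)$ and $W'+M=\overline{\Sigma}_{\overrightarrow{e'_j}}(C,f)$ with $\overrightarrow{e'_j}$ of type $B(0,b'_j,y'_j,j,r+1)$ (recall $u=r+1$); by Theorem \ref{theorem3} this is a component, necessarily of type II since $a=0$ and $v'=u\ne 0$, precisely when $b'_j\ge 2$. Conversely, any type II component $\overline{\Sigma}_{\overrightarrow{e''}}(C,f)$ of some $W^s_e(C)$, say $\overrightarrow{e''}=B(0,b'',y'',u'',v'')$ with $v''\ne 0$ and $b''\ge 2$, arises this way: apply the construction to the (unique balanced) type I component $W$ in degree $e-k$ with $u=v''$, taking $j=u''$. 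The type $\overrightarrow{e'_{u''}}$ produced above shares with $\overrightarrow{e''}$ the non-negative multiplicities $(u'',v'')$ and the total degree, so their balanced negative parts coincide and $\overrightarrow{e'_{u''}}=\overrightarrow{e''}$.

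The main obstacle is the content of the middle paragraph: the degeneracy locus of $u$ is cut out using only the twist $n=1$, whereas a genuine splitting degeneracy locus involves all twists $n$ (Equation \ref{vgl4}). What makes the identification $Z_j+M=W^{2u+j-1}_{d+k}(C)\cap(U+M)$ legitimate is that on the open set $U+M$ the cohomological conditions for the remaining twists are automatic, so that $Z_j+M$ really is an open piece of $\overline{\Sigma}_{\overrightarrow{e'_j}}(C,f)$; combined with Theorem \ref{theorem1} (available because $C$ is a general $k$-gonal curve), which pins its dimension to $g-u(\overrightarrow{e'_j})$, this forces the expected codimension. The residual equality $u(\overrightarrow{e'_j})-u(\overrightarrow{e})=(k-\rho)(\rk F_2-\rho)$ is then a routine computation.
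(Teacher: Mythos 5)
There is a genuine error in the construction of the map $u$, and it breaks precisely the expected-codimension claim, which is the only part of the statement not already contained in the cited results. Your $F_1=(E_d\otimes\mathcal{O}_{\mathbb{P}^1}(1))|_{\{q\}\times U}$ and $F_2=R^1\pi_*E_d$ have ranks $k$ and $g-d+r$, and $\dim\ker(u(L))=h^0(L+M)-h^0(L)$. But by the base point free pencil trick $h^0(L+M)\geq 2h^0(L)=2(r+1)$ for \emph{every} $L\in U$, so your $u$ has rank at most $k-r-1$ everywhere on $U$: the loci $\{\rk u\leq\rho\}$ with $\rho\geq k-r-1$ are all of $U$, and for $\rho=k-r-1-j$ the condition $h^0(L+M)\geq 2(r+1)+j$ cuts out a locus whose true codimension is $u(\overrightarrow{e'})-u(\overrightarrow{e})=j\,(g-d-k+2r+1+j)$ (by Theorem \ref{theorem1}), whereas the expected codimension for your ranks is $(k-\rho)(\rk F_2-\rho)=(r+1+j)(g-d-k+2r+1+j)$. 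The ``finite computation'' you defer would therefore not close; it would exhibit the mismatch. The missing idea is to remove the $2(r+1)$ automatic sections of $L+M$ before forming the degeneracy loci: the paper maps $\pi_*(\mathcal{L}\oplus\mathcal{L})\to\pi_*(\mathcal{L}\otimes p^*(M+D))$ via two general sections $s,t$ of $M$, uses freeness from $M$ (again via the base point free pencil trick) to see this is injective on every fibre, hence by Lemma \ref{lemma4} has locally free cokernel $N$, and takes $F_1=N$, $F_2=\pi_*(\mathcal{L}\otimes p^*(M+D)|_{U\times D})$. Then $\rk N-r_t=t$ rather than $r+1+t$, and the expected codimension becomes $t(g-d-k+2r+1+t)$, which does match $u(\overrightarrow{e'})-u(\overrightarrow{e})$.

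The remaining parts of your argument (constancy of $h^0$ and $h^1$ on $U$, the translation by $M$, the identification of the degeneracy loci with $\overline{\Sigma}_{\overrightarrow{e'}-1}(C,f)\cap U$ via the splitting type of points of $U$, and the argument that every type II component arises this way) are essentially the paper's and are fine once the correct $u$ is in place; but as written the central assertion about expected codimension is false for the map you construct.
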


\begin{proof}
Let $W=\overline{\Sigma}_{\overrightarrow{e}}(C,f)$ with $\overrightarrow {e}$ of type $B(0,b,y,r+1,0)$.
Let $\overrightarrow {e'}$ be of type $B(0,b',y',t,r+1)$ for some $t \geq 1$ and such that $\overrightarrow {e'}-1 \leq \overrightarrow {e}$ then of course $\Sigma _{\overrightarrow {e'}-1}(C) \subset U$.
Clearly each splitting sequence of balanced plus balanced type with $a=0$ and $v\neq 0$ can be obtained in that way (using a suited $\overrightarrow {e}$).

Let $\mathcal{L}$ be the restriction of the Poincar\'e bundle $\mathcal{P}_d$ to $U \times C$.
Let $p: U\times C \rightarrow C$ and $\pi : U \times C \rightarrow U$ be the projection morphisms.
Let $D$ be an effective divisor of large degree on $C$ and consider the natural exact sequence
\[
0 \rightarrow \mathcal{L} \otimes p^*(M) \rightarrow \mathcal {L} \otimes p^*(M+D) \rightarrow \mathcal{L} \otimes p^*(M+D)\vert _{U \times D} \rightarrow 0 \text { .}
\]
Using this sequence we obtain the following exact sequence on $U$
\[
\pi _*(\mathcal{L} \otimes p^*(M+D)) \rightarrow \pi _*(\mathcal {L} \otimes p^*(M+D)\vert _{U \times D}) \rightarrow R^1\pi_* (\mathcal {L} \otimes p^* (M)) \rightarrow 0 \text { .}
\]
In this sequence $\pi _*(\mathcal {L} \otimes p^*(M+D))$ is locally free of rank $d+k+\deg (D)-g+1$ and $\pi_* (\mathcal {L} \otimes p^*(M+D) \vert _{U\times D})$ is locally free of rank $\deg (D)$.
Remember that for each $L \in U$ we have $h^1(L+M+D)=0$ (implying $R^1\pi_* (\mathcal {L}\otimes p^*(M+D))=0$) and since $R^2\pi_*(\mathcal {L}\otimes p^*(M))=0$ we have $R^1\pi_*(\mathcal {L} \otimes p^*(M))(L) \cong H^1(C,L+M)$ for each $L \in U$.
Those remarks follow from Grauert's Theorem and the cohomology and base change Theorem , see e.g. \cite {ref1}, Chapter III, Section 12.

Let $s$ and $t$ be two general global sections of $M$ and consider the map $\mathcal{L} \oplus \mathcal {L} \rightarrow \mathcal {L} \otimes p^*(M)$ locally defined by $(u,v) \rightarrow u \otimes s+v\otimes t$.
Consider the resulting composite morphism between locally free sheaves on $U$:
\[
q : \pi _*(\mathcal {L} \oplus \mathcal {L}) \rightarrow \pi_*(\mathcal {L} \otimes p^*(M)) \rightarrow \pi_*(\mathcal {L} \otimes p^*(M+D))
\]
For $L\in U$ the map
\[
q(L) : H^0(C,L)\oplus H^0(C,L) \cong H^0(C,L)\otimes H^0(C,M) \rightarrow H^0(C,L + M + D)
\]
factors through the natural multiplication map
\[
H^0 (C,L) \otimes H^0 (C,M) \rightarrow H^0(C,L+M) \text { .}
\]
with $H^0 (C,L+M) \rightarrow H^0(C,L+M+D)$ being injective.

From the base point free pencil trick (see e.g. \cite {ref9}, p. 126) if follows its kernel is $H^0 (C, L-M)$.
Since $[L] \in \Pic^d_f(C)$ this is zero, hence $q(L)$ is injective for every $L \in U$.
This implies $q$ is a monomorphism and the cokernel $N$ is also locally free (see Lemma \ref{lemma4}) of rank $d+k+\deg (D)-g+1-2(r+1)$.
By construction the composition of $q$ with $\pi_*(\mathcal{L} \otimes p^*(M+D)) \rightarrow \pi_*(\mathcal {L} \otimes p^* (M+D)\vert _{U \times D})$ is zero, so we obtain an exact sequence
\[
N \xrightarrow{u} \pi_*(\mathcal{L} \otimes p^*(M+D)\vert_{U \times D}) \rightarrow R^1\pi_*(\mathcal {L}\otimes p^*(M))\rightarrow 0 \text { .}
\]
This is the map $u : F_1 \rightarrow F_2$ mentioned in the statement.
For each $L \in U$ we have the exact sequence
\[
N(L) \xrightarrow{u(L)} H^0(C,(L+M+D)\vert _D) \rightarrow H^1(C,L+M) \rightarrow 0 \text { .}
\]

We have $h^0(L+M) \geq 2r+2+t$ if and only if $h^1(L+M)\geq 2r+2+t-d-k+g-1$ if and only if $\rk (u(L)) \leq \deg (D)-2r-2-t+d+k-g+1$.
This is a non-trivial condition if and only if $\rk (N) > \deg (D)-2r-2-t+d+k-g+1$, i.e. $t>0$.
In case $t=0$ then $L+M \in \Pic^{d+k}(C)$ does not belong to a component of type II (it belongs to a component of type III), so we can assume $t>0$.
Also we need $h^1(L+M) \geq 1$ in case $L+M$ belongs to a component of type II.
Therefore we need $t \geq d+k-g-2r=\sum_{i=1}^k e_i -1+2k-2r$ (because of Formula \ref{vgl1}).
Using the splitting sequence $\overrightarrow {e}$ in our situation this implies we need
\[
t \geq (y+1)-(b-1)(y+x) \text { .}
\]
In case $b\geq 2$ then $(y+1)-(b-1)(y+x) \leq (y+1)-(y+x)=1-x$, but $t\geq 1-x$ since $t\geq 1$.
In case $b=1$ we need $t \geq y+1$ but in this case we always have $h^0(L+M)\geq 2r+2+y$ (because equality holds for $L \in \Sigma _{\overrightarrow {e}}(C,f)$) therefore there is no splitting degeneracy locus $\overline{\Sigma}_{\overrightarrow {e'}}(C,f)$ with $\overrightarrow {e'}$ of type $B(0,b',y',t,r+1)$ with $t \leq y$.

In case $L'\in \Sigma _{\overrightarrow {e'}}$ with $\overrightarrow {e'}$ of type $B(0,b',y',t,r+1)$ and $\deg(L')=d+k$ then $L' - M=L$ with $L\in U$ such that $h^0(L+M)=2r+2+t$ and $L \in \Sigma_{\overrightarrow {e'}-1}(C,f)$.
Define $U_t \subset U$ by
\begin{multline*}
U_t = \{ L\in U : h^0(L+M) \geq 2r+2+t \} \\
= \{ L \in U : \rk (u(L)) \leq \deg (D)-2r-2-t+d+k-g+1 \} \text { .}
\end{multline*}
We write $r_t=\deg (D)-2r-2-t+d+k-g+1$.
Of course, by definition $\Sigma _{\overrightarrow {e'}-1}(C,f) \subset U_t$.
On the other hand, if $L \in U_t$ then the splitting sequence $\overrightarrow {e''}$ of $L$ satisfies $e''_i=0$ for $k-r \leq i \leq k$ (because $L \in U$) and $e''_i=-1$ for $k-r-t \leq i \leq k-r-1$.
This clearly implies $\overrightarrow {e''} \leq \overrightarrow {e'}-1$, hence $U_t\subset \overline{\Sigma}_ {\overrightarrow {e'}-1}(C,f)$.
Since $U_t$ is a closed subset of $U$ and $\overline {\Sigma _{\overrightarrow {e'}-1}(C,f)}=\overline {\Sigma}_{\overrightarrow {e'}-1}(C,f)$ we find $\overline {\Sigma}_{\overrightarrow {e'}-1}(C,f) \cap U=U_t$ and $\overline{\Sigma}_{\overrightarrow {e'}-1}(C,f)$ is the closure of $U_t$ in $\Pic ^d(C)$.
Those statements are equivalent to the statement in the proposition except for the fact that as a degeneracy locus of the morphism $u$ one has $\dim (U_t)$ has the expected dimension (if non-empty).

From the previous arguments it follows from Theorem \ref{theorem1} that $\dim (U)-\dim (U_t) = u(\overrightarrow {e'})-u(\overrightarrow {e})$ (and of course $U_t=\emptyset$ if $u(\overrightarrow {e'})>g$).
On the other hand, as a degeneracy locus of the morphism $u$ we have

\begin{multline*}
\dim (U_t) \leq \dim (U) -(\rk (N)-r_t)(\rk (\pi_ *(\mathcal {L}\otimes p^*(M+D)\vert _{U \times D})-r_t) \\
=\dim (U) -t(2r+1+t-d-k+g) \text { .}
\end{multline*}
Therefore we need to show
\[
u(\overrightarrow {e'})-u(\overrightarrow {e})=t(2r+1+t-d-k+g) \text { .}
\]
From Equation \ref{vgl1} we obtain the following two equations
\[
g+k-d-1=by+(b+1)(k-r-1-y)=b(k-r-1)+k-r-1-y
\]

\begin{multline*}
g+k-d-1=(b'+1)y'+(b'+2)(k-r-1-t-y')+t\\
=(b'+1)(k-r-1-t)+k-r-1-t-y'+t
\end{multline*}
and this implies
\[
(b'+1)(k-r-1-t)=b(k-r-1)-y+y' \text { .}
\]
We also have
\[
u(\overrightarrow {e})=[b(k-r-1-y)+(b-1)y](r+1)=b(k-r-1)(r+1)-y(r+1)
\]
and

\begin{multline*}
u(\overrightarrow {e'})=[(b'+1)(k-r-1-t-y')+b'y'](r+1)\\
+[b'(k-r-1-t-y')+(b'-1)y']t\\
=(b'+1)(k-r-1-t)(r+1+t)-(k-r-1-t)t-y'(r+1+t) \text { .}
\end{multline*}
Filling in the previous expression for $(b'+1)(k-r-1-t)$ into $u(\overrightarrow {e'})$ one obtains
\[
u(\overrightarrow {e'})-u(\overrightarrow {e})=((b-1)(k-r-1)-y+t)t=t(g-k-d+t+2r+1) \text { .}
\]
\end{proof}

\begin{remark} \label{rem1}
Using the notations of the previous proof we obtain that if $f$ is general, then locally at a point of $\Sigma_{\overrightarrow {e'}-1}(C,f)$ the set $\overline{\Sigma}_{\overrightarrow {e'}-1}(C,f)$ is given by $\codim _{\overline{\Sigma}_{\overrightarrow {e}}(C,f)}(\overline{\Sigma}_{\overrightarrow {e'}-1}(C,f))$ equations in $\overline{\Sigma}_{\overrightarrow {e}}(C,f)$.
\end{remark}

\begin{remark} \label{rem2}
The proof of Proposition \ref{prop2} shows that for $\overrightarrow {e}$ being of type $B(0,b,y,t,r+1)$ the codimension of $\Sigma _{\overrightarrow {e}}(C,f)$ in $\Pic^d(C)$ is at most $u(\overrightarrow {e})$ without referring to \cite {ref4} as done by explaining Equation \ref{vgl3}.
\end{remark}

In the previous proof we made use of the following lemma which should be well-known.

\begin{lemma}\label{lemma4}
Let $u : E \rightarrow E'$ be a morphism of locally free sheaves on a scheme $T$ of finite type over $K$ and assume for each closed point $t \in T$ the map between the fibers $u(t) : E(t) \rightarrow E'(t)$ is injective.
Then $u$ is a monomorphism and the cokernel $E' / E$ is locally free.
\end{lemma}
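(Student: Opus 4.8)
The plan is to reduce everything to a local computation with matrices, since both assertions --- that $u$ is a monomorphism and that $\coker(u)$ is locally free --- can be checked locally on $T$. Fix a closed point $t \in T$ and choose an affine open neighborhood over which $E \cong \mathcal{O}_T^{\,r}$ and $E' \cong \mathcal{O}_T^{\,s}$ are free, with $r = \rk(E)$ and $s = \rk(E')$. Relative to these trivializations $u$ is given by an $s \times r$ matrix $A$ with entries in $\mathcal{O}_{T,t}$, and the fibre map $u(t) : E(t) \rightarrow E'(t)$ is represented by the reduction $\bar{A}$ of $A$ modulo the maximal ideal $\mathcal{M}_{T,t}$, a matrix over the residue field $K(t)$.

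The first key step is to translate the hypothesis. Injectivity of $u(t)$ means $\bar{A}$ has full column rank $r$ over $K(t)$, so some $r \times r$ minor of $\bar{A}$ is nonzero; equivalently, the corresponding minor of $A$ is a unit in $\mathcal{O}_{T,t}$. This minor is a regular function that does not vanish at $t$, hence does not vanish on some open neighborhood $V$ of $t$. After shrinking to $V$ and reordering the rows of $E'$, I may assume $A = \left(\begin{smallmatrix} A_1 \\ A_2 \end{smallmatrix}\right)$, where $A_1$ is an $r \times r$ matrix invertible over $\mathcal{O}_T(V)$ and $A_2$ is $(s-r) \times r$.

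Next I would perform an explicit change of basis of the target. Left multiplication of $A$ by the invertible matrix $\left(\begin{smallmatrix} I_r & 0 \\ -A_2 & I_{s-r} \end{smallmatrix}\right)\left(\begin{smallmatrix} A_1^{-1} & 0 \\ 0 & I_{s-r} \end{smallmatrix}\right)$ over $\mathcal{O}_T(V)$ carries $A$ to $\left(\begin{smallmatrix} I_r \\ 0 \end{smallmatrix}\right)$. Thus, after an automorphism of $E'|_V$, the map $u|_V$ becomes the standard inclusion $\mathcal{O}_V^{\,r} \hookrightarrow \mathcal{O}_V^{\,s}$ onto the first $r$ summands. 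In particular $u|_V$ is injective with locally split image, and $\coker(u)|_V \cong \mathcal{O}_V^{\,s-r}$ is free of rank $s-r$.

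Finally I would globalize. The neighborhoods $V$ constructed above cover every closed point of $T$; equivalently, the locus where some $r \times r$ minor of the matrix of $u$ is a unit is an open subset of $T$ containing all closed points. Since $T$ is of finite type over $K$ it is a Jacobson scheme, so an open set containing every closed point is all of $T$, its complement being a closed set with no closed points and hence empty. Therefore $u$ is a monomorphism and $\coker(u)$ is locally free on all of $T$. The main point requiring care is the passage from the stalk at $t$ to an actual open neighborhood --- ensuring the chosen minor, and hence $A_1^{-1}$, has entries that are genuine regular functions on $V$ rather than mere germs --- together with the Jacobson argument that extends the conclusion from closed points to the non-closed points; the matrix reduction itself is routine.
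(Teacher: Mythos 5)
Your proof is correct, but it takes a genuinely different route from the paper's. You argue by explicit local trivialization: injectivity of $u(t)$ forces some $r\times r$ minor of the local matrix of $u$ to be a unit at $t$, hence on a neighborhood $V$, after which left multiplication by an invertible matrix puts $u|_V$ in the standard form $\left(\begin{smallmatrix} I_r \\ 0 \end{smallmatrix}\right)$, exhibiting $u$ locally as a split inclusion with free cokernel; the Jacobson property of schemes of finite type over $K$ then upgrades ``an open set containing all closed points'' to all of $T$. The paper instead argues homologically: tensoring the exact sequences $0 \rightarrow \ker(u) \rightarrow E \rightarrow \im(u) \rightarrow 0$ and $0 \rightarrow \im(u) \rightarrow E' \rightarrow \coker(u) \rightarrow 0$ with the residue field $K(t)$, it deduces $\Tor^{\mathcal{O}_{T,t}}_1(\coker(u)_t,K(t))=0$ from the injectivity of $u(t)$, invokes the local criterion for freeness to conclude $\coker(u)_t$ is free, back-substitutes to get $\Tor^{\mathcal{O}_{T,t}}_1(\im(u)_t,K(t))=0$ and hence $\ker(u)(t)=0$, and finishes with Nakayama. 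Your argument is more elementary and makes the local splitting of $u$ explicit; the paper's is coordinate-free and isolates exactly which $\Tor$ groups must vanish. Both proofs need the finite-type hypothesis in the same place --- to pass from closed points to all points --- and you are the more explicit about it. The one step genuinely requiring care, namely that the chosen minor is an honest unit of $\mathcal{O}_T(V)$ (so that $A_1^{-1}$ has entries in $\mathcal{O}_T(V)$) because a section of $\mathcal{O}_T$ whose germ is a unit at every point of $V$ is invertible on $V$, is correctly flagged and handled in your write-up.
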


\begin{proof}
Viewing $E$ and $E'$ as vectorbundles on $T$ this seems very clear, but here is a direct proof.

Consider the short exact sequences of sheaves
\[
0 \rightarrow \ker (u) \rightarrow E \rightarrow \im(u) \rightarrow 0
\]
\[
0 \rightarrow \im (u) \rightarrow E' \rightarrow \coker (u) \rightarrow 0 \text { .}
\]
For $t \in T$ we have the exact sequence
\[
0 \rightarrow \Tor^{\mathcal{O}_{T,t}}_1(\im(u)_t,K(t)) \rightarrow \ker(u)(t) \rightarrow E(t) \rightarrow \im(u)(t) \rightarrow 0 \text { .}
\]
Since $E(t) \rightarrow \im(u)(t)$ is injective (because $u(t)$ is injective), it is an isomorphism and therefore $\Tor ^{\mathcal{O}_{T,t}}_1(\im (u)_t,K(t)) \rightarrow \ker (u)(t)$ is an isomorphism too.
Since $u(t)$ is an injection it follows also $\im (u)(t) \rightarrow E'(t)$ is an injection.
Therefore, from the exact sequence
\[
0 \rightarrow \Tor^{\mathcal{O}_{T,t}}_1(\coker (u)_t,K(t)) \rightarrow \im (u)(t) \rightarrow E'(t) \rightarrow \coker (u)(t) \rightarrow 0
\]
it follows $\Tor ^{\mathcal{O}_{T,t}}_1(\coker (u)_t,K(t))=0$.
This implies $\coker(u)_t$ is a free $\mathcal{O}_{T,t}$-module for all closed points $t\in T$, hence $\coker (u)$ is locally free on $T$ (see e.g. \cite{ref5}, p. 171 and Corollary A.3.3 p. 617).
But then also $\im (u)_t$ is a free $\mathcal {O}_{T,t}$-module and therefore $\Tor ^{\mathcal {O}_{T,t}}_1(\im (u)_t,K(t))=0$.
This implies $\ker (u) (t)=0$ for all $t \in T$ and using Nakayama's Lemma this implies $\ker (u)=0$.
So we obtain the exact sequence
\[
0 \rightarrow E \rightarrow E' \rightarrow \coker (u) \cong E'/E \rightarrow 0 \text { .}
\]
\end{proof}

\begin{remark}\label{rem3}
In the proof of Proposition \ref{prop2} we use that in case $L \in U$, because of the base-point free pencil trick, we have $h^0(L+M) \geq 2r+2$, hence $L+M \in W^{2r+1}_{d+k}$.
In case $L+M \in \Sigma _{\overrightarrow {e'}}(C,f)$ with $\overrightarrow {e'}$ of type $B(0,b',y',t,r+1)$ with $t\geq 1$ then we have the stronger statement $L+M \in W^{2r+1+t}_{d+k}(C)$.

In case $b' \geq 2$ then we have $L+M$ belongs to a component of type II of $W^{2r+2+t}_{d+k}(C)$.
This means those elements belonging to an open subset of that component are coming from $L \in W^r_d(C)$ being free from $M$ but adding $M$ one gets more global sections then prescribed by the base point free pencil trick.
So from this point of view they have a special property with respect to $M$.
In Proposition \ref{prop2} it is described that those components satisfy a generic behaviour with respect to this special property.

Note that in case $b'=1$ then we can construct $\overrightarrow {e''} \geq \overrightarrow {e'}$ changing $e'_{k-r}=1$ and $e'_{k-r-t-1}=-1$ into $0$ and keeping the other values of $\overrightarrow {e'}$ unchanged.
So we have $\overline{\Sigma} _{\overrightarrow {e'}}(C,f) \varsubsetneq \overline{\Sigma}_{\overrightarrow {e''}}(C,f) \subset W^{2r+1+t}_{d+k}(C)$ and then $\overline{\Sigma}_{\overrightarrow {e'}}(C,f)$ is not an irreducible component of type II (it is properly contained in an irreducible component of type II or III).

In case $\overrightarrow {e'}$ is of type $B(a,b,y,u,v)$ with $a \neq 0$ and $b \geq 2$ (hence $\overline {\Sigma}_{\overrightarrow {e'}}(C,f)$ is a component of type III), then $[L'] \in \Sigma _{\overrightarrow {e'}}(C,f)$ defines $L =L'-aM \in \Sigma _{\overrightarrow {e'}-a}(C,f)$ as described before and $h^0(L')$ is equal to the value one obtains from $h^0(L)$ by applying the base point free pencil trick.
This means $L'$ has no special property with respect to $M$ besides the property already contained in $L$.

This gives the picture that all irreducible components of the schemes $W^r_d(C)$ on a general $k$-gonal curve are in a natural way determined by the irreducible components of $W^r_{d,f}$ satisfying the classical Brill-Noether Theory.
In particular we proved the statements from Theorem A.
\end{remark}

\section{Tangent spaces to the scheme structures}\label{section4}

Up to now we only considered $W^r_d(C)$ and $\overline {\Sigma}_{\overrightarrow {e}}(C,f)$ as sets.
The scheme structure on $W^r_d(C)$ is explained in \cite{ref9}, Chapter IV.
\begin{definition}\label{definition5}
Let $\mathcal{P}_d$ be the Poincar\'e line bundle on $C \times \Pic^d(C)$.
Let $p: C\times \Pic^d(C) \rightarrow C$ and $\pi : C \times \Pic^d(C) \rightarrow \Pic^d(C)$ be the projection morphisms.
Then $W^r_d(C)$ is the subscheme of $\Pic^d(C)$ defined by the $(g-d+r)$-th Fitting ideal of $R^1\pi_*(\mathcal {P}_d)$.
\end{definition}

The Fitting ideal in Definition \ref{definition5} can be described as follows.
Let $D$ be a divisor of large degree on $C$, then we obtain an exact sequence similar to the one occuring in the proof of Proposition \ref{prop2}
\[
\pi_ *(\mathcal{P}_d \otimes \mathcal{O}(p^{-1}(D))) \xrightarrow {u} \pi_*(\mathcal {P}_d \otimes \mathcal {O}(p^{-1}(D)) \vert _{p^{-1}(D)}) \rightarrow R^1\pi_*(\mathcal {P}_d) \rightarrow 0
\]
The $k$-th Fitting ideal of $R^1\pi_*(\mathcal{P}_d)$ is locally generated by the minors of order $\deg (D)-k+1$ using local matrix representations of $u$.

Inspired by this definition and using Formula \ref{vgl2}, in \cite{ref6} one defines a natural scheme structure on $\overline \Sigma _{\overrightarrow {e}}(C,f)$.
We introduce the notation $r(n)+1 = \sum _{i=1}^k \max \{ 0, e_i+n+1 \}$.

\begin{definition}\label{definition6}
We continue to use the notations from Definition \ref{definition5}.
The scheme $\overline{\Sigma}_{\overrightarrow{e}} (C,f)$ is the scheme-theoretical intersection of the subschemes of $\Pic ^d(C)$ defined by the $(g-(d+kn)+r(n))$-th Fitting ideal of $R^1\pi_*(\mathcal{P}_d \otimes p^*(M^{\otimes n}))$ for all $n \in \mathbb{Z}$.
\end{definition}

The theory of Fitting ideals is described in \cite{ref14} (see also \cite{ref5}, 20.2).
In the sequel we will frequently use it commutes with base change (see e.g. \cite{ref5}, Corollary 20.5) without mentioning it.

\begin{remark}\label{rem4}
In Definition \ref{definition6}, using the $n=0$, one finds one of those intersecting subschemes of $Pic^d(C)$ is the scheme $W^{r(0)}_d(C)$ as defined in Definition \ref{definition5}.
In particular we obtain that, as schemes, from the definitions we only know $\overline{\Sigma}_{\overrightarrow {e}}(C,f)$ is a closed subscheme of $W^{r(0)}_d(C)$.
\end{remark}

As already mentioned in Theorem \ref{theorem1}, in \cite{ref7} one proves that, if non-empty, the splitting loci $\Sigma _{\overrightarrow {e}}(C,f)$ in case $f$ is general is smooth as a set.
Moreover in Corollary 1.3 of \cite{ref7} it is stated (together with other statements, see Theorem \ref{theorem3}) that the schemes $W^r_d(C)$ are generically smooth in case $C$ is a general $k$-gonal curve (i.e. it has no multiple components).
We are going to recall the proof of the smoothness of the splitting loci from \cite{ref7} and indicate why the proof of the generically smoothness of the schemes $W^r_d(C)$ is not complete in \cite{ref7}.

\begin{notation}\label{not3}
Let $[L]\in \Pic^d(C)$.
A tangent vector $v \in T_L(\Pic^d(C))$ can be identified with an isomorphism class of an invertible sheaf $\mathcal{L}_v$ of relative degree $d$ on $\pi_{\epsilon} : C \times \Spec (K[\epsilon]) \rightarrow \Spec (K[\epsilon])$ (here $\epsilon ^2=0$) such that for the base change $\iota : \Spec (K) \rightarrow \Spec (K[\epsilon])$ given by the natural epimorphism $K[\epsilon] \rightarrow K[\epsilon]/\langle \epsilon \rangle \cong K$ one has $L \cong \iota^*(\mathcal{L}_v)$.
Using the projection $p_{\epsilon} : C \times \Spec{K[\epsilon}] \rightarrow C$ we have $(p_{\epsilon})_ * (\mathcal{L}_v)$ is a locally free sheaf of rank 2 on $C$ and we also write $\mathcal{L}_v$ to denote this vectorbundle on $C$.
The above mentioned base change corresponds to an epimorphism $\mathcal{L}_v \rightarrow L$ of locally free sheaves on $C$ and we obtain an exact sequence of locally free sheaves on $C$:
\[
0 \rightarrow L \rightarrow \mathcal{L}_v \rightarrow L \rightarrow 0
\]
(using transition functions for $\mathcal {L}_v$ one finds the kernel of the epimorphism has the same transition functions as $L$).
This implies the tangent vector $v$ gives rise to an element of $\Ext^1_C(L,L)$.
This gives a canonical identification of $T_L(\Pic^d(C)$ with $\Ext^1_C(L,L)$.
Using \cite{ref1}, Chapter III, Propositions 6.3 and 6.7, one finds this is isomorphic to $H^1(C,\mathcal{O}_C)$, which is often mentioned as being the tangent space to $\Pic^d(C)$ at some point because $\Pic^0(C)$ is the quotient of $H^1(C,\mathcal{O}_C)$ through the lattice $H^1(C,\mathbb{Z})$.
In Remark \ref{rem6} we make this identification more canonical and we give some comment on the behaviour of this identification under translations by multiples of $M$.

Now let $f : C \rightarrow \mathbb{P}^1$ be a morphism of degree $k$ and write $f_*(L)=\mathcal{O}(\overrightarrow {e})$ for some splitting type $\overrightarrow {e}$.
Let $\mathcal{E}_v=f_* (\mathcal {L}_v)$, a locally free sheaf of rank $2k$ on $\mathbb{P}^1$.
We get the exact sequence
\[
0 \rightarrow E \rightarrow \mathcal{E}_v \rightarrow E \rightarrow 0
\]
and therefore we get an element of $\Ext^1_{\mathbb{P}^1}(E,E)$ we are going to denote by $df(v)$.
Although there is no scheme parametrizing the functor of locally free sheaves of rank $k$ on $\mathbb{P}^1$, this space $\Ext^1_{\mathbb{P}^1}(E,E)$ can be considered as the tangent space of this functor at $E$ (as a matter of fact in the intermediate step of $C \times \Spec(K[\epsilon]) \rightarrow \mathbb{P}^1 \times \Spec(K[\epsilon]) \rightarrow \mathbb {P}^1$ we can consider $\mathcal{E}_v$ as a locally free sheaf of rank $k$ on $\mathbb{P}^1 \times \Spec (K[\epsilon])$ such that $\iota^*(\mathcal{E}_v)\cong E$).
The map $df: \Ext^1_C(L,L) \rightarrow \Ext^1_{\mathbb{P}^1}(E,E)$ is $K$-linear and can be considered as the tangent map at $[L]$ of the morphism from $\Pic^d(C)$ to the functor of locally free sheaves of rank $k$ on $\mathbb{P}^1$.
Again from \cite{ref1} we obtain $\Ext^1_{\mathbb{P}^1}(E,E) \cong H^1(\mathbb{P}^1, E^D \otimes E)$ and this implies $\dim (\Ext^1_{\mathbb{P}^1}(E,E))=u(\overrightarrow {e})$.
\end{notation}

\begin{remark}\label{rem5}
In \cite{ref7} it is proved that in case $f$ is general and $L \in \Sigma_{\overrightarrow {e}}(C,f)$ then the linear map $df$ (see Notation \ref{not3}) is surjective.
Since elements $v \in T_{[L]}(\Pic ^d(C))$ tangent to $\Sigma _{\overrightarrow {e}}(C,f)$ need to be contained in the kernel of $df$ and the codimension of $\Sigma _{\overrightarrow {e}}(C,f)$ in $\Pic ^d(C)$ is at most $u(\overrightarrow {e})$, it follows $\dim (\Sigma _{\overrightarrow {e}}(C,f))=g-u(\overrightarrow {e})$ and $\Sigma _{\overrightarrow {e}}(C,f)$ is smooth as a set.
Note that this argument does not refer to the scheme structure of $\overline {\Sigma}_{\overrightarrow {e}}(C,f)$ or to the scheme structure of $W^r_d(C)$ in case $\overline {\Sigma}_{\overrightarrow {e}}(C,f)$ is an irreducible component of $W^r_d(C)$.
Therefore those arguments are insufficient to prove the statement contained in Corollary 1.3 in \cite{ref7} that $W^r_d(C)$ is generically smooth in case $C$ is a general $k$-gonal curve.

However using the arguments of \cite{ref13}, Section 4, it is easy to see that the results proved in \cite{ref7} imply the schemes $\overline{\Sigma}_{\overrightarrow {e}}(C,f)$ are smooth along $\Sigma _{\overrightarrow {e}}(C,f)$ in case $f$ is general (see Proposition \ref{prop3}; I am grateful to H.K. Larson for mentioning this fact).
However as mentioned in Remark \ref{rem4} it is still insufficient for the statement on $W^r_d(C)$.
\end{remark}

\begin{remark}\label{rem6}
We now make some remarks on the identification between $\Ext^1_C(L,L)$ and $H^1(C,\mathcal{O}_C)$ mentioned in Notation \ref{not3} (and we use those notations).
Using local trivialisations of $L$ we have transitionfunctions $f_{i,j} \in \Gamma (U_i \cap U_j, \mathcal {O}^*_C)$ and there exist $g_{i,j} \in \Gamma (U_i \cap U_j, \mathcal{O}_C)$ such that $f_{i,j}+\epsilon g_{i,j}$ are transitionfunctions for $\mathcal {L}_v$ on $C \times \Spec(K[\epsilon])$.
From the identity $(f_{i,j}+\epsilon g_{i,j})\dot (f_{j,k}+\epsilon g_{j,k})=f_{i,k}+\epsilon g_{i,k}$ we obtain $g_{i,k}=g_{i,j}f_{j,k}+f_{i,j}g_{j,k}$.
This implies $\frac{g_{i,k}}{f_{i,k}}=\frac{g_{i,j}}{f_{i,j}}+\frac{g_{j,k}}{f_{j,k}}$, therefore $\frac{g_{i,j}}{f_{i,j}} \in \Gamma (U_i \cap U_j, \mathcal {O}_C)$ satisfies the cocyle condition and one obtains $c(v) \in H^1(C,\mathcal{O}_C))$.
This is an explicit description of the isomorphism $c : \Ext^1_C(L,L) \rightarrow H^1(C,\mathcal {O}_C)$.

Now let $m_{i,j}$ be transition functions of $M$ (the line bundle associated to $f$).
For each $n \in \mathbb{Z}$ we have an extension
\[
0 \rightarrow L \otimes M^{\otimes n} \rightarrow \mathcal {L}_v \otimes M^{\otimes n} \rightarrow L \otimes M^{\otimes n} \rightarrow 0
\]
corresponding to $v_n \in \Ext^1_C(L + nM,L + nM)=T_{L+nM}(\Pic^{d+kn}(C))$.
As described above we have the explicit description of the identification $c_n : \Ext^1_C(L + nM,L + nM)  \rightarrow H^1(C,\mathcal{O}_C)$.
Since the transition functions of $L + nM$ are $f_{i,j}m_{i,j}^n$ and the transition functions of $\mathcal{L}_v\otimes M^{\otimes n}$ are $f_{i,j}m_{i,j}^n+g_{i,j}m_{i,j}^n$ it follows $c_n(v_n)$ is also described by the cocycle $\frac{g_{i,j}}{f_{i,j}}$.
This shows that the identifications $c$ and $c_n$ are compatible with the tangent map of the translation $\Pic^d(C) \rightarrow \Pic^{d+nk}(C) : L \rightarrow L+nM$.
\end{remark}

We show the arguments of a statement already mentioned in Remark \ref{rem5}.
\begin{proposition}\label{prop3}
Let $f : C \rightarrow \mathbb{P}^1$ be a general morphism of degree $k$ with $C$ a smooth curve of genus $g$.
In case $\Sigma _{\overrightarrow {e}}(C,f)$ is not empty (i.e. $u(\overrightarrow {e})\geq g$) then the scheme $\overline{\Sigma}_{\overrightarrow {e}}(C,f)$ is smooth along $\Sigma _{\overrightarrow {e}}(C,f)$.
\end{proposition}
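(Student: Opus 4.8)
\emph{Strategy.} The plan is to prove the statement by a Zariski tangent space computation at an arbitrary closed point $[L] \in \Sigma_{\overrightarrow{e}}(C,f)$. Since $f$ is general, $\overline{\Sigma}_{\overrightarrow{e}}(C,f)$ is the closure of $\Sigma_{\overrightarrow{e}}(C,f)$ (the remark after Theorem \ref{theorem1}), and by Theorem \ref{theorem1} the set $\Sigma_{\overrightarrow{e}}(C,f)$ is smooth of dimension $g - u(\overrightarrow{e})$; hence the local dimension of the scheme $\overline{\Sigma}_{\overrightarrow{e}}(C,f)$ at $[L]$ equals $g - u(\overrightarrow{e})$. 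Because the dimension of the Zariski tangent space always bounds this local dimension from below, it is enough to prove $\dim_K T_{[L]}(\overline{\Sigma}_{\overrightarrow{e}}(C,f)) \leq g - u(\overrightarrow{e})$, and equality with the local dimension then yields smoothness at $[L]$.

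To compute this tangent space I would use the scheme structure of Definition \ref{definition6}, together with the local description of each Fitting ideal by a morphism of locally free sheaves, exactly as in the exact sequence following Definition \ref{definition5} (twisting by a divisor $D$ of large degree and by $M^{\otimes n}$). For each $n$ this produces a morphism $\phi_n$ whose cokernel is $R^1\pi_*(\mathcal{P}_d \otimes p^*(M^{\otimes n}))$; its cokernel fibre at $[L]$ is $H^1(C, L+nM) \cong H^1(\mathbb{P}^1, E(n))$ and its kernel fibre is $H^0(\mathbb{P}^1, E(n))$, where $E = f_*(L) \cong \mathcal{O}(\overrightarrow{e})$. Since $[L] \in \Sigma_{\overrightarrow{e}}(C,f)$, the splitting type of $L$ is exactly $\overrightarrow{e}$, so the corank of $\phi_n([L])$ is exactly the value prescribed by $\overrightarrow{e}$; the standard tangent space formula for a determinantal locus at a point of expected corank then identifies $T_{[L]}$ of the $n$-th subscheme with the kernel of the induced map $\psi_n(v) : H^0(\mathbb{P}^1, E(n)) \to H^1(\mathbb{P}^1, E(n))$. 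This induced map is the connecting homomorphism of the extension $0 \to E(n) \to \mathcal{E}_v(n) \to E(n) \to 0$, i.e.\ cup product with $df(v) \in \Ext^1_{\mathbb{P}^1}(E,E) \cong H^1(\mathbb{P}^1, E^D \otimes E)$ (Notation \ref{not3}). As $\overline{\Sigma}_{\overrightarrow{e}}(C,f)$ is the scheme-theoretic intersection of these subschemes, its tangent space is $\bigcap_n \ker(\psi_n)$.

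The core of the argument is the identification $\bigcap_n \ker(\psi_n) = \ker(df)$. The inclusion $\ker(df) \subseteq \bigcap_n \ker(\psi_n)$ is immediate, since every $\psi_n(v)$ is obtained from $df(v)$ by cup product. For the inclusion I actually need, namely $\bigcap_n \ker(\psi_n) \subseteq \ker(df)$, I would argue contrapositively: writing $df(v)$ through the splitting $E^D \otimes E = \bigoplus_{i,j} \mathcal{O}(e_j - e_i)$ as a tuple of components $c_{ij} \in H^1(\mathbb{P}^1, \mathcal{O}(e_j - e_i))$, suppose some $c_{ij} \neq 0$ (which forces $e_j - e_i \leq -2$). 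Taking $n = -e_i$, the component of $\psi_n(v)$ from the summand $H^0(\mathbb{P}^1, \mathcal{O}) \subseteq H^0(\mathbb{P}^1, E(n))$ to the summand $H^1(\mathbb{P}^1, \mathcal{O}(e_j - e_i)) \subseteq H^1(\mathbb{P}^1, E(n))$ is cup product with $c_{ij}$, which sends the constant section to $c_{ij} \neq 0$; hence $\psi_{-e_i}(v) \neq 0$. Therefore $df(v) \neq 0$ implies $v \notin \bigcap_n \ker(\psi_n)$, and the two inclusions give $T_{[L]}(\overline{\Sigma}_{\overrightarrow{e}}(C,f)) = \ker(df)$.

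Finally I would invoke the result of \cite{ref7} recalled in Remark \ref{rem5}: for general $f$ and $[L] \in \Sigma_{\overrightarrow{e}}(C,f)$ the map $df : \Ext^1_C(L,L) \to \Ext^1_{\mathbb{P}^1}(E,E)$ is surjective. Using $\dim_K \Ext^1_C(L,L) = g$ and $\dim_K \Ext^1_{\mathbb{P}^1}(E,E) = u(\overrightarrow{e})$ (both from Notation \ref{not3}), we obtain $\dim_K \ker(df) = g - u(\overrightarrow{e})$, so $\dim_K T_{[L]}(\overline{\Sigma}_{\overrightarrow{e}}(C,f)) = g - u(\overrightarrow{e})$, which matches the local dimension and proves smoothness along $\Sigma_{\overrightarrow{e}}(C,f)$. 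The step I expect to be the main obstacle is the rigorous set-up of the second paragraph: one must match the Fitting-ideal scheme structure of Definition \ref{definition6} with an honest determinantal tangent space computation and verify, through cohomology and base change, that the induced kernel-to-cokernel map of each $\phi_n$ is exactly cup product with $df(v)$. Once this is in place, the linear-algebra identification $\bigcap_n \ker(\psi_n) = \ker(df)$ is the clean combinatorial core and the concluding dimension count is routine.
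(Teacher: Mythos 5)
Your proposal is correct, and its overall architecture coincides with the paper's: both reduce the statement to the identification $T_{[L]}(\overline{\Sigma}_{\overrightarrow {e}}(C,f))=\ker (df)$ at $[L]\in \Sigma_{\overrightarrow {e}}(C,f)$ and then conclude from the surjectivity of $df$ proved in \cite{ref7} (recalled in Remark \ref{rem5}) together with the dimension statement of Theorem \ref{theorem1}. The difference is in how that identification is established. The paper's proof of Proposition \ref{prop3} simply quotes Lemma 4.1 of \cite{ref13}, after using Lemma \ref{lemma5} to replace $R^1\pi_*(\mathcal{P}_d(n))$ by $R^1\pi_*(E_d(n))$; its own re-derivation of that lemma goes through Lemma \ref{lemma6} (tangency to the $n$-th Fitting scheme is equivalent to surjectivity of $H^0(\mathbb{P}^1,\mathcal{E}_v(n))\rightarrow H^0(\mathbb{P}^1,E(n))$) followed by the splitting criterion of Lemma \ref{lemma7}. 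You instead phrase tangency to the $n$-th Fitting scheme as vanishing of the connecting map $\psi_n(v):H^0(\mathbb{P}^1,E(n))\rightarrow H^1(\mathbb{P}^1,E(n))$ --- the same statement as Lemma \ref{lemma6} read through the long exact cohomology sequence --- and then prove $\bigcap_n\ker(\psi_n)=\ker(df)$ directly, by decomposing $df(v)$ into components $c_{ij}\in H^1(\mathbb{P}^1,\mathcal{O}(e_j-e_i))$ and detecting any nonzero $c_{ij}$ with the constant section of the $i$-th summand at the twist $n=-e_i$. This component-wise detection is a valid substitute for Lemma \ref{lemma7}: distinct components land in distinct direct summands of $H^1(\mathbb{P}^1,E(n))$, so no cancellation can occur. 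What your route buys is self-containedness (no appeal to \cite{ref13}); what it leaves implicit is the base-change bookkeeping matching the Fitting-ideal structure of Definition \ref{definition6} with the fibrewise kernel-to-cokernel map, which you correctly flag as the main technical burden and which is exactly what Lemmas \ref{lemma5} and \ref{lemma6} of the paper carry out in detail.
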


To make the connection between the line bundles on $C$ and the locally free sheaves on $\mathbb{P}^1$ we prove the following lemma.
\begin{lemma}\label{lemma5}
Let $f :C \rightarrow \mathbb{P}^1$ be a morphism of degree $k$ with $C$ a smooth curve of genus $g$.
We use Notation\ref{not1} using the Poincar\'e sheaf and we also write $f$ to denote $f_{\Pic^d(C)}$.
We denote both projections $C \times \Pic^d(C) \rightarrow \Pic^d(C)$ and $\mathbb{P}^1 \times \Pic^d(C) \rightarrow \Pic^d(C)$ by $\pi$.
For $n$ we write $\mathcal{P}_d(n)$ to denote $\mathcal{P}_d \otimes f^*(\mathcal {O}_{\mathbb{P}^1}(n))$ and $E_d(n)$ to denote $E_d \otimes \mathcal{O}_{\mathbb{P}^1}(n)$.
For any $i\geq 0$ and any $n \in \mathbb{Z}$ the $i$-th Fitting ideals of $R^1\pi_*(\mathcal{P}_d(n))$ and $R^1\pi_*(E_d(n))$ are equal.
\end{lemma}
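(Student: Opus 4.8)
The plan is to prove the stronger statement that the two sheaves $R^1\pi_*(\mathcal{P}_d(n))$ and $R^1\pi_*(E_d(n))$ are in fact isomorphic as $\mathcal{O}_{\Pic^d(C)}$-modules. Since the Fitting ideals of a coherent sheaf depend only on its isomorphism class (see \cite{ref14} and \cite{ref5}, 20.2), equality of the $i$-th Fitting ideals for every $i \geq 0$ then follows immediately, and there is nothing special about the first direct image.

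First I would record that the projection $\pi : C \times \Pic^d(C) \rightarrow \Pic^d(C)$ factors as the composition of $f = f_{\Pic^d(C)} : C \times \Pic^d(C) \rightarrow \mathbb{P}^1 \times \Pic^d(C)$ followed by the projection $\pi : \mathbb{P}^1 \times \Pic^d(C) \rightarrow \Pic^d(C)$. The key point is that $f$ is a finite morphism, being the base change of the finite morphism $f : C \rightarrow \mathbb{P}^1$ along $\mathbb{P}^1 \times \Pic^d(C) \rightarrow \mathbb{P}^1$. A finite morphism is affine, so the higher direct images $R^jf_*(\mathcal{G})$ vanish for all $j \geq 1$ and every quasi-coherent sheaf $\mathcal{G}$ (see \cite{ref1}, Chapter III). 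Consequently the Grothendieck spectral sequence $R^p\pi_*(R^qf_*(\mathcal{G})) \Rightarrow R^{p+q}\pi_*(\mathcal{G})$ for the composition has only its $q=0$ row, so it degenerates and yields natural isomorphisms $R^i\pi_*(\mathcal{G}) \cong R^i\pi_*(f_*\mathcal{G})$ for all $i \geq 0$.

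It then remains to identify $f_*(\mathcal{P}_d(n))$ with $E_d(n)$. Since $\mathcal{P}_d(n) = \mathcal{P}_d \otimes f^*(\mathcal{O}_{\mathbb{P}^1}(n))$ and $\mathcal{O}_{\mathbb{P}^1}(n)$ is invertible, the projection formula (\cite{ref1}, Chapter 2, Exercise 5.1(d)) gives
\[
f_*(\mathcal{P}_d(n)) = f_*\bigl(\mathcal{P}_d \otimes f^*(\mathcal{O}_{\mathbb{P}^1}(n))\bigr) \cong f_*(\mathcal{P}_d) \otimes \mathcal{O}_{\mathbb{P}^1}(n) = E_d \otimes \mathcal{O}_{\mathbb{P}^1}(n) = E_d(n),
\]
where $E_d = f_*(\mathcal{P}_d)$ by Notation \ref{not1}. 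Combining this with the isomorphism coming from the spectral sequence for $i=1$ produces the desired isomorphism $R^1\pi_*(\mathcal{P}_d(n)) \cong R^1\pi_*(E_d(n))$ of sheaves on $\Pic^d(C)$, hence the equality of all their Fitting ideals.

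I expect no genuine obstacle here: the only thing that must be checked with any care is that $f_{\Pic^d(C)}$ is finite, which is immediate from the stability of finiteness under base change, so that the higher direct images along it vanish and the spectral sequence degenerates. Everything else is the projection formula together with the elementary fact that isomorphic coherent sheaves have the same Fitting ideals.
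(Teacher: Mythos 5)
Your proof is correct and follows essentially the same route as the paper: factor $\pi$ through the finite morphism $f_{\Pic^d(C)}$, use vanishing of the higher direct images along $f$ together with the Leray/Grothendieck spectral sequence to get $R^1\pi_*(\mathcal{P}_d(n)) \cong R^1\pi_*(f_*\mathcal{P}_d(n))$, and identify $f_*(\mathcal{P}_d(n))$ with $E_d(n)$ via the projection formula. Your explicit remarks that finiteness is stable under base change and that Fitting ideals depend only on the isomorphism class of the sheaf are details the paper leaves implicit, but the argument is the same.
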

\begin{proof}
Both projection maps $\pi$ are related by composition with $f : C \times \Pic^d(C) \rightarrow \mathbb{P}^1 \times \Pic^d(C)$.
Since $R^1f_*(\mathcal{P}_d)=0$ it follows from the Leray spectral sequence (see e.g. \cite {ref10}, Chapter III, Theorem 1.18) that $R^1\pi_*(\mathcal {P}_d(n))\cong R^1\pi_*(f_*(\mathcal {P}_d(n)) \cong R^1\pi_*(E_d(n))$.
In the second isomorphism we use the projection formula implying $f_*(\mathcal{P}_d(n)) \cong f_*(\mathcal{P}_d) \otimes \mathcal{O}_{\mathbb{P}^1}(n)$.
\end{proof}
\begin{proof}[Proof of Proposition \ref{prop3}]
In Lemma 4.1 of \cite{ref13} it is proved that the tangent space to the scheme defined as the intersection of the schemes defined by the $(g-(d+kn)+r(n))$-th Fitting ideal of $R^1\pi_*(E_d(n))$ with $n \in \mathbb{Z}$ is equal to the kernel of $df: \Ext^1_C(L,L) \rightarrow \Ext^1_{\mathbb{P}^1}(E,E)$ in case $[L] \in \Sigma _{\overrightarrow {e}}(C,f)$.
Because of Lemma \ref{lemma5} and the arguments recalled in Remark \ref{rem5} this finishes the proof.
\end{proof}

The proof of Lemma 4.1 in \cite{ref13} is based on Lemma 4.2 in \cite{ref13}.
In order to finish the proof that $W^r_d(C)$ is generically smooth in case $C$ is a general $k$-gonal curve, we need to replace this lemma by a more old fashioned-type lemma.

\begin{lemma}\label{lemma6}
We continue to use the situation from Lemma \ref{lemma5}, we extend the use of $(n)$ and we also use Notation \ref{not3}.
Let $[L]\in \Sigma _{\overrightarrow {e}}(C,f)$ and $v \in T_{[L]}(\Pic^d (C))$.
This tangent vector $v$ belongs to the tangent space of the $(g-(d+kn)+r(n))$-th Fitting ideal of $R^1\pi_*(E_d(n))$ if and only if $H^0 (\mathbb{P}^1,\mathcal{E}_v(n)) \rightarrow H^0(\mathbb{P}^1,E(n))$ is surjective.
\end{lemma}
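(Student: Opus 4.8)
The plan is to compute the tangent space by base change to the dual numbers and then reduce everything to the structure of a single module over $K[\epsilon]$. By Lemma \ref{lemma5} I may work with $E_d(n)$ on $\mathbb{P}^1 \times \Pic^d(C)$. First I would note that the tangent vector $v$ is a morphism $v : \Spec(K[\epsilon]) \to \Pic^d(C)$ with image $[L]$, and that $v$ lies in the tangent space to the scheme cut out by the $(g-(d+kn)+r(n))$-th Fitting ideal of $R^1\pi_*(E_d(n))$ exactly when the pullback of that ideal vanishes in $K[\epsilon]$. Since Fitting ideals commute with base change, and since $R^1\pi_*$ is the top direct image along a family of curves (hence commutes with the non-flat base change $v$), this pulled-back ideal equals the corresponding Fitting ideal of the $K[\epsilon]$-module $N := R^1(\pi_{\epsilon})_*(\mathcal{E}_v(n)) = H^1(\mathbb{P}^1, \mathcal{E}_v(n))$; here I use $v^*(\mathcal{P}_d) = \mathcal{L}_v$ together with the fact that $f_*$ commutes with base change to get $v^*(E_d) = f_*(\mathcal{L}_v) = \mathcal{E}_v$. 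Thus $v$ is tangent to the scheme if and only if the $j$-th Fitting ideal of $N$ is zero, with $j = g-(d+kn)+r(n)$.

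The second step is to read off the structure of $N$ as a $K[\epsilon]$-module from the cohomology of the defining sequence $0 \to E(n) \to \mathcal{E}_v(n) \to E(n) \to 0$. Its long exact sequence
\[
0 \to H^0(E(n)) \to H^0(\mathcal{E}_v(n)) \to H^0(E(n)) \xrightarrow{\partial} H^1(E(n)) \to N \to H^1(E(n)) \to 0
\]
shows, writing $a := h^1(E(n))$, that $N/\epsilon N \cong H^1(E(n))$ and $\epsilon N \cong \coker(\partial)$, because multiplication by $\epsilon$ on $N$ factors through the sub- and quotient copies of $E(n)$. Since $K[\epsilon]$ is a principal ideal ring of length two, $N$ decomposes as $K[\epsilon]^{\oplus p} \oplus K^{\oplus q}$; comparing dimensions gives $p = \dim\coker(\partial)$ and $q = \rk(\partial)$, so $p+q = a$. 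Here the hypothesis $[L] \in \Sigma_{\overrightarrow{e}}(C,f)$ enters essentially: for such $L$ one has $h^0(E(n)) = r(n)+1$ exactly, hence $a = h^1(E(n)) = g-(d+kn)+r(n) = j$, so the Fitting index is precisely the corank $a$ at the given point and not a deeper-stratum value.

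Finally I would compute the $a$-th Fitting ideal of $N \cong K[\epsilon]^{\oplus p} \oplus K^{\oplus q}$ directly from the presentation whose only nonzero part is the $q \times q$ block $\epsilon\, I_q$: because $\epsilon^2 = 0$, every minor of size $\geq 2$ vanishes, and one finds the relevant Fitting ideal is $0$ precisely when $q = 0$, that is, when $N$ is free. Combining the three steps, $v$ is tangent to the Fitting scheme if and only if $q = 0$, if and only if $\partial = 0$; and by exactness of the displayed sequence $\partial = 0$ is equivalent to the surjectivity of $H^0(\mathcal{E}_v(n)) \to H^0(E(n))$, which is the assertion.

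The step I expect to require the most care is the bookkeeping in the last paragraph: matching the Fitting index fixed by Definitions \ref{definition5} and \ref{definition6} with the corank $a$ at $[L]$, and checking that under that convention the module computation singles out freeness of $N$ rather than being off by one. The two base-change facts in the first paragraph are routine but must be justified (in particular that the top-degree $R^1\pi_*$ commutes with the non-flat base change $v$), and the identification $\epsilon N \cong \coker(\partial)$ should be verified by tracking the $K[\epsilon]$-module structure through the long exact sequence.
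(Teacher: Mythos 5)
Your argument is correct, but it takes a genuinely different route from the paper's. The paper's proof fixes a divisor $D$ of large degree, base changes the presentation $\pi_*(E_d(D)) \xrightarrow{u} \pi_*(E_d(D)\vert_D)$ to $\Spec(K[\epsilon])$, and works by explicit matrix reduction: surjectivity of $H^0(\mathbb{P}^1,\mathcal{E}_v(n))\to H^0(\mathbb{P}^1,E(n))$ lets one choose bases making $r(n)+1$ columns of the matrix of $u_v$ vanish (whence the minors vanish), and conversely the vanishing of carefully chosen minors is used to perform column operations producing elements of $\ker(u_v)=H^0(\mathbb{P}^1,\mathcal{E}_v(n))$ lifting a basis of $H^0(\mathbb{P}^1,E(n))$. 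You instead argue intrinsically on the $K[\epsilon]$-module $N=H^1(\mathbb{P}^1,\mathcal{E}_v(n))$: after the base-change facts for Fitting ideals and for the top direct image (both of which ultimately reduce to right-exactness of the pulled-back presentation, so they are no stronger than what the paper uses implicitly), the decomposition $N\cong K[\epsilon]^{\oplus p}\oplus K^{\oplus q}$ together with the long exact sequence gives $q=\rk(\partial)$ and $p+q=h^1(E(n))$, and the whole lemma collapses to a one-line Fitting-ideal computation. The point you flag as delicate --- matching the index convention --- is indeed the only place where care is needed, and it does check out: with the paper's convention (the $j$-th Fitting ideal is generated by the minors of order $n-j+1$ of a presentation whose target has rank $n$; see the discussion after Definition \ref{definition5}) and with $j=g-(d+kn)+r(n)=h^1(E(n))=p+q$ at a point of $\Sigma_{\overrightarrow{e}}(C,f)$, the relevant ideal is the ideal of $1\times 1$ minors of the minimal presentation, hence equals $(\epsilon)$ when $q\geq 1$ and $(0)$ when $q=0$ --- exactly the dichotomy you need, with no off-by-one. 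What your version buys is brevity and a transparent explanation of why the statement is sensitive to the precise Fitting index (one step deeper and every minor would vanish for degree reasons); what the paper's version buys is a completely elementary, basis-level argument whose explicit manipulations are closer in spirit to the matrix descriptions reused elsewhere in Section \ref{section4}.
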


This lemma should be compared to the following fact on the schemes $W^r_d(C)$.
Let $[L]\in W^r_d(C) \setminus W^{r+1}_d(C)$.
In \cite{ref9}, Chapter IV, Proposition (4.2), it is proved that $v \in T_{[L]}(\Pic^d(C))$ it a tangent vector to the scheme $W^r_d(C)$ if and only if $H^0(C, \mathcal{L}_v) \rightarrow H^0(C,L)$ is surjective.

\begin{proof}[Proof of Lemma \ref{lemma6}]
By replacing $L$ with $L \otimes M^{\otimes n}$ we can assume $n=0$.
We write $m$ to denote $g-d+r(0)$.
Let $D$ be an effective divisor of very large degree on $\mathbb{P}^1$ and consider the exact sequence on $\mathbb{P}^1 \times \Pic^d(C)$
\[
0 \rightarrow E_d \rightarrow E_d(D) \rightarrow E_d(D)\vert D \rightarrow 0
\]
(using some short notations omitting the projection $p : \mathbb{P}^1 \times \Pic^d(C) \rightarrow \mathbb{P}^1$).
It gives rise to an exact sequence on $\Pic ^d(C))$:
\[
\pi_*(E_d(D)) \xrightarrow {u}\pi_*(E_d(D)\vert _D) \rightarrow R^1\pi_*(E_d) \rightarrow 0 \text { .}
\]
Since $R^1\pi_*(E_d)([L])\cong H^1(\mathbb{P}^1,E)$ we obtain the exact sequence
\[
0 \rightarrow H^0(\mathbb{P}^1,E) \rightarrow H^0(\mathbb{P}^1,E(D)) \xrightarrow {u([L])} H^0(\mathbb{P}^1, E(D) \vert _D) \rightarrow H^1(\mathbb{P}^1, E) \rightarrow 0 \text { .}
\]

For $v \in T_{[L]}(\Pic^d(C))$, taking base change with the associated morphism $\Spec (K[\epsilon]) \rightarrow \Pic^d(C)$, we get the exact sequence
\[
\pi_*(\mathcal{E}_v(D)) \xrightarrow {u_v} \pi_*(\mathcal {E}_v(D) \vert _D) \rightarrow R^1\pi_*(\mathcal {E}_v) \rightarrow 0
\]
and pulling back by $\iota : [L]=\Spec (K) \rightarrow \Spec (K[\epsilon])$ we obtain the above exact sequence.
(Above, we also write $\pi$ to denote the projection map $\mathbb{P}^1 \times \Spec(K[\epsilon]) \rightarrow \Spec (K[\epsilon])$.)
By definition $v$ is tangent to the scheme defined by the $m$-th Fitting ideal of $R^1\pi_*(E_d)$ if and only if all $(m+1)\times (m+1)$-minors of a matrix representation of $u_v$ are zero.
This matrix representation is a $\sum_{i=1}^k(e_i+\deg (D)+1)\times k\deg(D)$-matrix over $K[\epsilon]$.

In case $H^0(\mathbb{P}^1, \mathcal{E}_v) \rightarrow H^0(\mathbb{P}^1,E)$ is surjective then, using liftings of a $K$-base of $H^0(\mathbb{P}^1,E)$ to $H^0(\mathbb{P}^1,\mathcal{E}_v)$, we find we can take a $K[\epsilon]$-base for $\pi_* (\mathcal{E}_v(D))$ and $\pi _*(\mathcal {E}_v(D) \vert _D)$ such that the first $h^0(E)=r+1$ columns of the matrix representation on $u_v$ are zero.
This holds because those liftings are the $K[\epsilon]$-base of a free $K[\epsilon]$-submodule of $\pi_*(\mathcal{E}_v)=\ker (u_v)$ (this can be considered as an application of Lemma \ref{lemma4} using $T =\Spec (K[\epsilon])$).
This implies all minors of order $(\sum_{i=1}^ke_i)+k+k\deg(D)-r=d-g-r+k\deg(D)+1$ are zero, hence $v$ belongs to the tangent space of the scheme defined by the $m$-th Fitting ideal of $R^1\pi_*(E_d)$.

Now assume $v$ belongs  to the tangent space of the scheme defined by the $m$-th Fitting ideal of $R^1\pi_*(E_d)$.
Using the exact sequence with $u([L])$ we obtain we can find bases over $K[\epsilon]$ for $\pi_*(\mathcal {E}_v(D))$ and $\pi_*(\mathcal{E}_v(D) \vert _D)$ such that the matrix corresponding to $u_v$ has reduction modulo $\langle \epsilon \rangle$ with the first $r+1$ columns equal to zero, the left upper square of order $d-g+k\deg(D)-r$ equal to the identity matrix and the last $g-d+r$ rows equal to zero.
So the first $r+1$ base elements of $\pi_*(\mathcal {E}_v(D))$ correspond to a base of $h^0(\mathbb{P}^1,E)$.

The minor of order $d-g+k\deg(D)-r+1$ of that matrix corresponding to $u_v$ containing the upper square matrix of order $d-g+k\deg(D)-r$ and the $(i,j)$-th element for some $i \leq r+1$ and $j> d-g+k\deg (D)-r$ has to be zero.
This implies this $(i,j)$-th element has to be zero.
It follows that for $1 \leq i \leq r+1$, changing the $i$-th base element of $\pi_*(\mathcal{E}_v(D))$ by adding a suitable linear combination using coefficients whose reduction modulo $\langle \epsilon \rangle$ are 0 of the $(r+1+j)$-th base elements with $1 \leq j \leq d-g+k\deg(D)-r$, we obtain the $i$-th column of $u_v$ becomes zero.
Those new base elements correspond to global sections of $\pi_*(\mathcal{E}_v(D))$ being lifts of elements of a base of $H^0(\mathbb{P}^1,E)$ and belonging to the kernel of $H^0(u_v)$, hence belonging to $H^0(\mathbb{P}^1, \mathcal{E}_v)$.
This proves the reduction map $H^0(\mathbb{P}^1,\mathcal{E}_v) \rightarrow H^0(\mathbb{P}^1, E)$ is surjective.
\end{proof}

Using Lemma \ref{lemma6} one can obtain a proof of Lemma 4.1 in \cite{ref13} as follows.
From Lemma \ref{lemma6} it follows $v \in T_{[L]}\Pic^d(C)$ with $[L] \in \Sigma _{\overrightarrow {e}}(C,f)$ is tangent to the scheme $\overline{\Sigma} _{\overrightarrow {e}}(C,f)$ if and only if for all $n \in \mathbb{Z}$ the map $H^0(\mathbb{P}^1, \mathcal{E}_v(n)) \rightarrow H^0(\mathbb{P}^1,E(n))$ is surjective.
Now we can use the following lemma that should be well-known.

\begin{lemma}\label{lemma7}
Let $0 \rightarrow E_1 \rightarrow E_2 \rightarrow E_3 \rightarrow 0$ be an exact sequence of locally free sheaves on $\mathbb{P}^1$.
This sequence splits if and only if for each $n \in \mathbb{Z}$ the associated map $H^0(\mathbb{P}^1,E_2(n)) \rightarrow H^0(\mathbb{P}^1,E_3(n))$ is surjective.
\end{lemma}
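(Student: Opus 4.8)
The statement asserts that a short exact sequence $0 \to E_1 \to E_2 \to E_3 \to 0$ of locally free sheaves on $\mathbb{P}^1$ splits precisely when $H^0(\mathbb{P}^1, E_2(n)) \to H^0(\mathbb{P}^1, E_3(n))$ is surjective for every $n \in \mathbb{Z}$. The forward direction is immediate: if the sequence splits then $E_2 \cong E_1 \oplus E_3$, so $E_2(n) \cong E_1(n) \oplus E_3(n)$, and taking global sections gives a direct sum decomposition in which the map to $H^0(\mathbb{P}^1, E_3(n))$ is visibly surjective (it is the projection onto a summand). This half requires no hypotheses on the base other than that twisting and $H^0$ commute with direct sums.

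For the nontrivial converse, the plan is to measure the obstruction to splitting cohomologically and then show the surjectivity hypothesis forces that obstruction to vanish. The isomorphism classes of extensions of $E_3$ by $E_1$ are classified by $\operatorname{Ext}^1_{\mathbb{P}^1}(E_3, E_1)$, and the sequence splits if and only if its extension class is zero. Since $E_3$ is locally free, $\operatorname{Ext}^1_{\mathbb{P}^1}(E_3, E_1) \cong H^1(\mathbb{P}^1, E_3^{\vee} \otimes E_1)$. Writing $E_3 \cong \bigoplus_j \mathcal{O}_{\mathbb{P}^1}(a_j)$ and $E_1 \cong \bigoplus_i \mathcal{O}_{\mathbb{P}^1}(b_i)$ via Grothendieck's theorem, this group decomposes as $\bigoplus_{i,j} H^1(\mathbb{P}^1, \mathcal{O}_{\mathbb{P}^1}(b_i - a_j))$, which is nonzero only in the summands with $b_i - a_j \leq -2$.

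The key step is to connect the splitting to the stated surjectivity via the long exact sequence in cohomology. For each $n$, twisting by $\mathcal{O}_{\mathbb{P}^1}(n)$ and taking cohomology yields
\[
H^0(\mathbb{P}^1, E_2(n)) \to H^0(\mathbb{P}^1, E_3(n)) \xrightarrow{\delta_n} H^1(\mathbb{P}^1, E_1(n)),
\]
so the surjectivity hypothesis for all $n$ is equivalent to the vanishing of every connecting map $\delta_n$. The main point is that these connecting maps are cup product with the fixed extension class $\varepsilon \in H^1(\mathbb{P}^1, E_3^{\vee} \otimes E_1)$: concretely, $\delta_n$ is the composite $H^0(E_3(n)) \otimes \varepsilon \to H^1(E_3^{\vee} \otimes E_3 (n) \otimes \dots)$ followed by contraction, so $\delta_n$ is identically zero for all $n$ exactly when $\varepsilon$ pairs to zero against all global sections of all twists of $E_3$. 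I expect the main obstacle to be verifying that this pairing is nondegenerate enough to conclude $\varepsilon = 0$, and the cleanest route is the summand-by-summand reduction: it suffices to show that for a single nonsplit extension $0 \to \mathcal{O}(b) \to F \to \mathcal{O}(a) \to 0$ with $b - a \leq -2$, some twist $H^0(F(n)) \to H^0(\mathcal{O}(a+n))$ fails to be surjective. Choosing $n = -a$ so that $H^0(\mathcal{O}(a+n)) = H^0(\mathcal{O})$ is one-dimensional, the corresponding $\delta_{-a} \colon H^0(\mathcal{O}) \to H^1(\mathcal{O}(b-a))$ sends the generator to $\varepsilon$ itself (up to scalar), which is nonzero precisely because the extension is nonsplit. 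Thus nonsplitting produces a failure of surjectivity, and the contrapositive gives the converse. Assembling the summands, the surjectivity hypothesis forces every component of $\varepsilon$ to vanish, hence $\varepsilon = 0$ and the sequence splits.
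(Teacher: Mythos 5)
Your proof is correct, but it takes a genuinely different route from the paper. You identify the obstruction to splitting as the extension class $\varepsilon\in\Ext^1_{\mathbb{P}^1}(E_3,E_1)\cong H^1(\mathbb{P}^1,E_3^{\vee}\otimes E_1)$, observe that each connecting map $\delta_n$ is Yoneda composition (cup product) with $\varepsilon$, and then kill $\varepsilon$ component by component: writing $E_3\cong\bigoplus_j\mathcal{O}_{\mathbb{P}^1}(a_j)$ and taking $n=-a_j$, the section of $E_3(-a_j)$ coming from the $j$-th summand is sent by $\delta_{-a_j}$ exactly to the $j$-th component of $\varepsilon$ in $\bigoplus_j\Ext^1(\mathcal{O}(a_j),E_1)$, so the hypothesis forces $\varepsilon=0$. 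The paper instead constructs the splitting directly: for each $n$ such that $E_3(n)$ has a trivial summand, it lifts the section $1$ of that summand through the surjection $H^0(E_2(n))\to H^0(E_3(n))$, obtaining a map $\mathcal{O}(a_j)\to E_2$ over the inclusion $\mathcal{O}(a_j)\hookrightarrow E_3$, and assembles these stepwise into a section $E_3\to E_2$. Both arguments invoke Grothendieck's splitting theorem for $E_3$ and exploit the same twists $n=-a_j$; the paper's version is more elementary and explicitly exhibits the splitting map, while yours isolates the cohomological obstruction cleanly and makes transparent why only the twists $n=-a_j$ matter (the pairing of $\varepsilon$ against sections of other twists carries no new information). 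The only point you should make explicit if writing this up is that the short exact sequence itself need not decompose as a direct sum of extensions of line bundles; what decomposes is the class $\varepsilon$, and your final sentence about "every component of $\varepsilon$" is the correct way to phrase the reduction.
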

\begin{proof}
The only if statement is trivial.

Assume for all $n \in \mathbb{Z}$ the associated map $H^0(\mathbb{P}^1,E_2(n)) \rightarrow H^0(\mathbb{P}^1,E_3(n))$ is surjective.
Let $n \in \mathbb{Z}$ be such that the locally free sheaf $E_3(n)$ has a direct summand $\mathcal{O}_{\mathbb{P}^1}$.
The global section $1 \in H^0(\mathbb{P}^1,\mathcal{O}_{\mathbb{P}^1})$ of that summand can be lifted to a global section of $E_2(n)$ (because of the surjectivity).
This gives rise to an injection $\mathcal {O}_{\mathbb{P}^1} \rightarrow E_2(n)$, injective on the fibers, such that the composition of the morphism to $E_3(n)$ and the projection on that component is the identity.
Using this one can make stepwise a section $E_3 \rightarrow E_2$ of the short exact seqeuence.
\end{proof}

In order to prove the generic smoothness of the schemes $W^r_d(C)$ in case $f$ is general, we first mention an easy reduction of the number of Fitting ideals we have to consider to obtain the scheme-structure in Definition \ref{definition6}.

\begin{lemma}\label{lemma8}
In order to define the scheme structure on $\overline{\Sigma}_{\overrightarrow {e}}(C,f)$ as defined in Definition \ref{definition6} we can take $-e_k \leq n < -e_1-1$.
\end{lemma}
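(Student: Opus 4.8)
The plan is to show that for every integer $n$ outside the range $-e_k \le n < -e_1-1$ the $(g-(d+kn)+r(n))$-th Fitting ideal of $R^1\pi_*(\mathcal{P}_d \otimes p^*(M^{\otimes n}))$ is the \emph{zero} ideal, so that the closed subscheme $Z_n \subseteq \Pic^d(C)$ it defines is all of $\Pic^d(C)$. Since the scheme-theoretic intersection in Definition \ref{definition6} is defined by the sum $\sum_n I_n$ of the ideals $I_n$ defining the $Z_n$, discarding those $I_n$ that vanish does not change the intersection; this gives the statement immediately. Throughout I write $k_n := g-(d+kn)+r(n)$.

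First I would record the meaning of $k_n$. Using Riemann--Roch on $L+nM$ (of degree $d+kn$) together with $r(n)+1 = \sum_{i}\max\{0,e_i+n+1\}$, one sees that on $\Sigma_{\overrightarrow{e}}(C,f)$ the integer $k_n$ equals the generic value $h^1(L+nM) = \sum_{i}\max\{0,-e_i-n-1\}$. The presentation recalled after Definition \ref{definition5} exhibits $Z_n$ as cut out by the minors of order $\deg D - k_n + 1$ of a matrix $u$ whose target has rank $\deg D$ and for which $\rk u([L]) = \deg D - h^1(L+nM)$. For the upper end this is quick: if $n \ge -e_1-1$ then every summand satisfies $-e_i-n-1 \le -e_1-n-1 \le 0$, so $k_n = 0$; the prescribed minors then have order $\deg D + 1$, exceeding the number of rows of $u$, and the corresponding Fitting ideal is $0$. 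Hence $Z_n = \Pic^d(C)$.

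The substantive case is $n \le -e_k-1$. Here $e_i+n+1 \le e_k+n+1 \le 0$, so $r(n)=-1$ and $k_n = g-1-d-kn$. Crucially, $R^1\pi_*(\mathcal{P}_d \otimes p^*(M^{\otimes n}))$ need not be locally free, since $h^1(L+nM)$ genuinely jumps, so I would not argue by local freeness. Instead, from $ke_k \ge \sum_{i=1}^k e_i = d-g+1-k$ (Formula \ref{vgl1}) one gets $d+kn \le d-ke_k-k \le g-1$. As $L \mapsto L+nM$ is a translation isomorphism $\Pic^d(C)\to\Pic^{d+kn}(C)$, a generic $L$ yields a generic line bundle of degree $\le g-1$, whose generic cohomology is $h^0=0$, $h^1 = g-1-d-kn = k_n$. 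By upper semicontinuity this generic value is the minimum, so $h^1(L+nM)\ge k_n$ for \emph{every} $L$, i.e.\ $\rk u([L]) \le \deg D - k_n$ everywhere. Thus the generic (hence maximal) rank of $u$ is $\deg D - k_n$, every minor of order $\deg D - k_n + 1$ vanishes at each point and therefore identically on the reduced variety $\Pic^d(C)$, so the Fitting ideal is again $0$ and $Z_n = \Pic^d(C)$.

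The main obstacle is precisely this lower-end argument: because the direct image sheaf fails to be locally free, the vanishing of $I_n$ cannot be read off from a constant fibre dimension and must instead be deduced from the maximal rank of the presentation matrix. The input that makes this go through is the degree bound $d+kn\le g-1$, which forces the generic $L+nM$ to be nonspecial from below so that its generic $h^1$ already attains the threshold $k_n$. Once both ends are settled, the sum-of-ideals description of the scheme-theoretic intersection finishes the proof.
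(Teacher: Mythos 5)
Your proposal is correct, and its overall strategy coincides with the paper's: show that for $n$ outside the stated range the relevant Fitting ideal is the zero ideal, so the corresponding subscheme is all of $\Pic^d(C)$ and contributes nothing to the scheme-theoretic intersection. The execution differs in a way worth noting. For $n\ge -e_1-1$ you observe that the Fitting index $k_n$ is $0$, so the prescribed minors have order $\deg D+1$ and do not exist; this is a clean, global, purely combinatorial argument, whereas the paper argues locally that $R^1\pi_*(\mathcal{P}_d(n))$ vanishes near $\Sigma_{\overrightarrow{e}}(C,f)$. The more significant divergence is at the lower end $n<-e_k$: the paper asserts $\ker(u(L))=H^0(C,L+nM)=0$ and deduces via Lemma \ref{lemma4} that $R^1\pi_*(\mathcal{P}_d(n))$ is locally free of rank equal to the Fitting index. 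That vanishing holds for $L\in\Sigma_{\overrightarrow{e}}(C,f)$ (and nearby), but not for arbitrary $L\in\Pic^d(C)$, since $\deg(L+nM)=d+kn$ is only bounded by $g-1$ and special line bundles of such degree can have sections; you correctly flag that $R^1\pi_*$ genuinely fails to be locally free there and instead use the pointwise rank bound $\rk u(L)\le \deg D-k_n$ together with reducedness of $\Pic^d(C)$ to conclude the ideal vanishes globally. This buys a proof of the lemma as a statement about the global scheme structure, which is what Definition \ref{definition6} requires. Two small simplifications are available to you: the inequality $h^1(L+nM)\ge g-1-(d+kn)=k_n$ is immediate from Riemann--Roch and $h^0\ge 0$, so the genericity-plus-semicontinuity detour (and even the degree bound $d+kn\le g-1$) is not needed; equivalently, the required minors have order exceeding the number of columns $\deg D-k_n$ of the presentation matrix, so they do not exist, exactly mirroring your upper-end argument.
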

\begin{proof}
Let $L \in \Pic^d(C)$.

Let $n<-e_k$.
Then $u(L): \pi_*(\mathcal{P}_d(n)(D))(L)=H^0(C, L + nM +D)\rightarrow \pi_*(\mathcal{P}_d(n)(D)\vert_D)(L)=H^0(C,L + nM +D\vert_D)$, hence $\ker (u(L))=H^0(C, L + nM)=0$.
From Lemma \ref{lemma4} it follows that locally at $L$ the sheaf $R^1\pi_*(\mathcal{P}_d(n))$ is locally free.
Therefore locally at $L$ we can use a presentation $0 \rightarrow 0 \rightarrow R^1\pi_*(\mathcal{P}_d(n)) \rightarrow R^1\pi_(\mathcal{P}_d(n)) \rightarrow 0$.
This shows the Fitting ideal is trivial locally at $L$.

Let $n \geq -(e_1+1)$.
For $L \in \Sigma _{\overrightarrow {e}}(C,f)$ we have $h^0(L+(n+1)M)-h^0(L+nM)=k$.
Since $\deg (M)=k$ this implies $L + nM$ is non-special (see the argument in the proof of Lemma \ref{lemma3}), in particular $h^1(L+nM)=0$.
This implies $R^1\pi_*(\mathcal{P}_d(n))=0$ in a neighbourhood of $L$ and then of course the Fitting ideal of $R^1\pi_*(\mathcal{P}_d(n))$ is trivial locally at $L$.
\end{proof}

The following proposition gives a further number of reductions on the integers $n \in \mathbb{Z}$ we have to consider in order to describe the tangent space of $\overline {\Sigma}_{\overrightarrow {e}}(C,f)$ at $L\in \Sigma_{\overrightarrow {e}}(C,f)$.

\begin{proposition}\label{prop4}
Let $f:C \rightarrow \mathbb{P}^1$ be a morphism of degree $k$ for a smooth curve $C$ of genus $g$ and let $L \in \Sigma _{\overrightarrow {e}}(C,f)$.
The tangent space to the scheme $\overline{\Sigma}_{\overrightarrow {e}}(C,f)$ at $L$ is equal to the intersection of the tangent spaces at $L$ of the subschemes of $\Pic^d(C)$ defined by the $(g-(d-ke_i)+r(-e_i))$-th Fitting ideals of $R^1\pi_*(\mathcal{P}_d(-e_i))$ with $e_1+1<e_i \leq e_k$.
\end{proposition}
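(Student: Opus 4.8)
The plan is to convert the scheme-theoretic tangent space condition into pure linear algebra of connecting homomorphisms, and then to organize these maps into a single graded module homomorphism over the homogeneous coordinate ring of $\mathbb{P}^1$, where the reduction becomes a statement about generators of a free module.

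First I would use Lemma \ref{lemma6} together with Lemma \ref{lemma5} to restate everything cohomologically. For $v \in T_{[L]}(\Pic^d(C))$, the vector $v$ lies in the tangent space at $L$ of the subscheme defined by the $(g-(d+kn)+r(n))$-th Fitting ideal of $R^1\pi_*(\mathcal{P}_d(n))$ if and only if $H^0(\mathbb{P}^1,\mathcal{E}_v(n)) \to H^0(\mathbb{P}^1,E(n))$ is surjective. Twisting the sequence $0 \to E \to \mathcal{E}_v \to E \to 0$ by $\mathcal{O}_{\mathbb{P}^1}(n)$ and taking the long exact cohomology sequence, this surjectivity is equivalent to the vanishing of the connecting homomorphism $\partial_n : H^0(\mathbb{P}^1,E(n)) \to H^1(\mathbb{P}^1,E(n))$, which is cup product with the class $df(v) \in \Ext^1_{\mathbb{P}^1}(E,E) = H^1(\mathbb{P}^1, E^D \otimes E)$. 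Thus the tangent space to $\overline{\Sigma}_{\overrightarrow{e}}(C,f)$ at $L$ equals $\{v : \partial_n = 0 \text{ for all } n \in \mathbb{Z}\}$; note $\partial_n$ vanishes automatically when $n<-e_k$ (then $H^0(E(n))=0$) or when $n \geq -e_1-1$ (then $H^1(E(n))=0$), which recovers Lemma \ref{lemma8} at the level of tangent spaces.

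Next comes the key observation: since cup product with the fixed class $df(v)$ is compatible with multiplication by global sections of $\mathcal{O}_{\mathbb{P}^1}(m)$, the family $(\partial_n)_n$ assembles into a degree-zero homomorphism of graded modules over $R = \bigoplus_n H^0(\mathbb{P}^1,\mathcal{O}_{\mathbb{P}^1}(n))$,
\[
\partial : \bigoplus_n H^0(\mathbb{P}^1,E(n)) \longrightarrow \bigoplus_n H^1(\mathbb{P}^1,E(n)).
\]
Because $E \cong \bigoplus_j \mathcal{O}_{\mathbb{P}^1}(e_j)$, the source is the free graded $R$-module $\bigoplus_j R(e_j)$, with a free generator $g_j$ in degree $-e_j$. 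Hence $\partial = 0$ if and only if $\partial$ kills each $g_j$, and $\partial(g_j)=\partial_{-e_j}(g_j)$ lives in degree $-e_j$. Finally I would compare the two tangent spaces. The containment $\{v:\partial=0\} \subseteq \bigcap_{e_1+1<e_i\leq e_k}\{v:\partial_{-e_i}=0\}$ is trivial. For the reverse, assume $\partial_{-e_i}=0$ for every value $e_1+1<e_i\leq e_k$; I must show $\partial$ annihilates each $g_j$. If $e_j > e_1+1$, then $-e_j$ is one of the chosen degrees, so $\partial(g_j)=\partial_{-e_j}(g_j)=0$ by hypothesis. If $e_j \leq e_1+1$, then $H^1(\mathbb{P}^1,E(-e_j))=\bigoplus_i H^1(\mathbb{P}^1,\mathcal{O}_{\mathbb{P}^1}(e_i-e_j))$ vanishes, since a nonzero summand would force $e_i \leq e_j-2 \leq e_1-1 < e_1$, contradicting $e_i \geq e_1$; so $\partial(g_j)=0$ automatically. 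This yields $\partial = 0$ and the claimed equality.

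The main obstacle I anticipate is the bookkeeping of the second step: one must verify carefully that the connecting maps are genuinely $R$-linear and correctly locate the degree in which each free generator $g_j$ lives, since the whole argument rests on the criterion that a map out of a free module vanishes as soon as it kills the generators. The slightly delicate point in the comparison is that the proposition asks for full vanishing of $\partial_{-e_i}$ in the special degrees, whereas the module argument only needs vanishing on the single generator $g_i$; these produce the same tangent set precisely because in each degree $-e_j$ either the target cohomology is zero or the degree is one of the chosen $-e_i$.
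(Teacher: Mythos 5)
Your proof is correct, and it takes a genuinely different route from the paper's. The paper stays on the curve $C$: it invokes the description from \cite{ref9}, Chapter IV, Proposition 4.2, of the tangent space to each Fitting-ideal scheme as the Serre-duality annihilator of the image of the multiplication map $\mu_1 : H^0(C,L-e'M)\otimes H^0(C,K-(L-e'M)) \to H^0(C,K)$, and then proves, for each intermediate twist $e_{i-1} < e' \le e_i-1$, the inclusion $\im(\mu_1) \subseteq \im(\mu_2)$ (hence the needed containment of tangent spaces) by showing via the base point free pencil trick that $m_1 : H^0(C,L-(e'+1)M)\otimes H^0(C,M) \to H^0(C,L-e'M)$ is surjective; iterating this steps every intermediate twist up to $-e_i$. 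You instead push everything to $\mathbb{P}^1$ via Lemmas \ref{lemma5} and \ref{lemma6}, identify each tangent-space condition with the vanishing of the connecting map $\partial_n$, i.e.\ cup product with $df(v)$, and then exploit that $\bigoplus_n H^0(\mathbb{P}^1,E(n))$ is a free graded module over $K[x_0,x_1]$ with generators in degrees $-e_j$, so that $\partial=0$ is detected on generators; the degrees with $e_j \le e_1+1$ are harmless because $H^1(\mathbb{P}^1,E(-e_j))=0$ there. Your argument is shorter and more structural, it absorbs Lemma \ref{lemma8} as a byproduct, and it replaces the induction over intermediate twists by a single freeness statement. What the paper's route buys is that it is phrased entirely in terms of the $\mu$-maps on $C$, which is exactly the form reused and sharpened in Proposition \ref{prop6}, where one needs the stronger statement that an intermediate twist gives the \emph{same} tangent space as the twist $-e_i$, proved there by a dimension count comparing $\im(\mu_1)$ and $\im(\mu_2)$. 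The one step you rightly flag, the $R$-linearity of the family $(\partial_n)$, does hold: multiplication by a global section of $\mathcal{O}_{\mathbb{P}^1}(m)$ defines a morphism from the twist-$n$ extension sequence to the twist-$(n+m)$ one, and connecting homomorphisms are natural with respect to such morphisms.
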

\begin{proof}
Assume $e_{i-1} < e_i-1$.
It is enough to prove that for each $e_{i-1} <e' \leq e_i-1$ the tangent space at $L$ to the scheme defined by the $(g-(d-ke')+r(-e'))$-th Fitting ideal of $R^1\pi_*(\mathcal{P}_d(-e'))$ contains the tangent space at $L$ to the scheme defined by the $(g-(d-k(e'+1))-r(-e'-1))$-th Fitting ideal of $R^1\pi_*(\mathcal{P}_d(-e'-1))$.

Using the isomorphism $c: \Ext^1(L,L) \rightarrow H^1(C,\mathcal{O}_C)$ described in Remark \ref{rem6} one obtains the following description for the tangent space to the scheme defined by the $(g-(d-ke')-r(-e'))$-th Fitting ideal of $R^1\pi_*(\mathcal{P}_d(-e'))$ at $L$ in \cite{ref9}, Chapter IV, Proposition 4.2.
Consider the multiplication map
\[
\mu_1 : H^0(C,L-e'M) \otimes H^0(C,K - (L-e'M)) \rightarrow H^0(C,K)
\]
then the tangent space is the orthogonal complement of $\im (\mu_1) \subset H^0(C,K)$ under the Serre duality $H^0(C,K)\otimes H^1(C,\mathcal{O}_C) \rightarrow K$.
As mentioned in Remark \ref{rem6} the isomorphism $c$ is compatible with the translation by multiples of $M$ under the various connected components of $\Pic (C)$.
In a similar way we use
\[
\mu_2 : H^0(C,L-(e'+1)M) \otimes H^0(C, K - (L-(e'+1)M)) \rightarrow H^0(C,K)
\]
and we need to prove $\im (\mu _1)\subset \im (\mu _2)$.

Consider the multiplication map
\[
m_1 : H^0(C,L -(e'+1)M)\otimes H^0(C,M) \rightarrow H^0(C,L -e'M)
\]
Because of the base point free pencil trick (see \cite {ref9}, p. 126) we have $\ker (m_1)\cong H^0(C,L-(e'+2)M)$.
Let $N=h^0(L-(e'+1)M))$.
We have $e_i-(e'+2) \geq -1$ and $e_{i-1}-(e'+2)<-2$ and therefore $h^0(L-(e'+2)M)=N-(k-i+1)$.
So we find $\dim ( \im (m_1))\geq 2N-N+(k-i+1)=N+(k-i+1)$.
Since $e_i - e' \geq 1$ and $e_{i-1}-e' >0$ we also have $h^0(L-e'M)=N+k-i+1$.
This proves $m_2$ is surjective.

Let $a_1, \cdots ,a_N$ be a base for $H^0(C,L-(e'+1)M)$ and let $s,t$ be a base for $H^0(C,M)$.
From the surjectivity of $m_1$ it follows the elements $sa_1, \cdots, sa_N,\linebreak ta_1, \cdots ,ta_N$ generate $H^0(C,L-e'M)$.
Therefore each element of $\im (\mu_1)$ can be written as $\sum_{i=1}^N(sa_ix_i+tb_iy_i)$ with $x_i, y_i \in H^0(C,K- (L-e'M))$.
But $\sum_{i=1}^N(sa_ix_i+tb_iy_i)=\mu_2(\sum_{i=1}^Na_i\otimes (sx_i+ty_i)) \in \im (\mu _2)$.
This proves $\im (\mu_1 )\subset \im (\mu _2)$.
\end{proof}

\begin{corollary}\label{cor1}
Let $C$ be a general $k$-gonal curve and let $Z$ be an irreducible component of type I of some scheme $W^r_d(C)$ then the scheme $W^r_d(C)$ is smooth at a general point of $Z$.
\end{corollary}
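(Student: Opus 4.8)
The plan is to compare the Zariski tangent space of the scheme $W^r_d(C)$ at a general point $L$ of $Z$ with that of the scheme $\overline{\Sigma}_{\overrightarrow{e}}(C,f)$, to show these two tangent spaces coincide, and then to conclude by a dimension count using the smoothness of $\overline{\Sigma}_{\overrightarrow{e}}(C,f)$ along $\Sigma_{\overrightarrow{e}}(C,f)$ from Proposition \ref{prop3}. Write $Z=\overline{\Sigma}_{\overrightarrow{e}}(C,f)$ with $\overrightarrow{e}$ of type $B(0,b,y,r+1,0)$, so that the only values occurring in $\overrightarrow{e}$ are $-b-1$, $-b$ and $0$ with $b\geq 1$. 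Since $f$ is general, $\Sigma_{\overrightarrow{e}}(C,f)$ is a dense open subset of $Z$, so a general point $L$ of $Z$ lies in $\Sigma_{\overrightarrow{e}}(C,f)$, and by Equation \ref{vgl5} it satisfies $h^0(L)=r+1$, i.e. $L\in W^r_d(C)\setminus W^{r+1}_d(C)$. I may assume $u(\overrightarrow{e})>0$, as otherwise $\dim Z=g$, forcing $Z=\Pic^d(C)=W^r_d(C)$ and the claim is trivial.

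First I would apply Proposition \ref{prop4}: the tangent space $T_L(\overline{\Sigma}_{\overrightarrow{e}}(C,f))$ is the intersection, over the values $e_i$ with $e_1+1<e_i\leq e_k$, of the tangent spaces at $L$ of the schemes defined by the $(g-(d-ke_i)+r(-e_i))$-th Fitting ideals of $R^1\pi_*(\mathcal{P}_d(-e_i))$. Because the values occurring are $-b-1$, $-b$ and $0$ and $b\geq 1$, the only value in the range $e_1+1<e_i\leq e_k=0$ is $e_i=0$ (I would check this separately according to whether $x>0$ or $x=0$, using $b\geq 1$ resp.\ $b\geq 2$; the remaining case $x=0,\,b=1$ is exactly the excluded trivial one $u(\overrightarrow{e})=0$). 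Hence $T_L(\overline{\Sigma}_{\overrightarrow{e}}(C,f))$ equals the tangent space at $L$ of the scheme cut out by the $(g-d+r(0))$-th Fitting ideal of $R^1\pi_*(\mathcal{P}_d)$. Since $r(0)=r$ for a splitting type of this form, this is precisely the Fitting ideal defining $W^r_d(C)$ in Definition \ref{definition5}, so $T_L(\overline{\Sigma}_{\overrightarrow{e}}(C,f))=T_L(W^r_d(C))$.

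To finish, I would run a dimension count. By Proposition \ref{prop3} the scheme $\overline{\Sigma}_{\overrightarrow{e}}(C,f)$ is smooth at $L$, so $\dim T_L(\overline{\Sigma}_{\overrightarrow{e}}(C,f))=\dim Z=\rho^r_d(g)$, the last equality being the dimension statement for components of type I recalled just after Proposition \ref{prop1}. Together with the previous step this gives $\dim T_L(W^r_d(C))=\rho^r_d(g)$. Since $Z$ is an irreducible component of $W^r_d(C)$ passing through $L$ we have $\dim_L W^r_d(C)\geq \dim Z=\rho^r_d(g)$, while always $\dim_L W^r_d(C)\leq \dim T_L(W^r_d(C))=\rho^r_d(g)$. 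Hence equality holds throughout, so $L$ is a smooth point of $W^r_d(C)$; as $L$ ranges over the dense open subset $\Sigma_{\overrightarrow{e}}(C,f)$ of $Z$, this is the assertion.

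The main obstacle is the reduction in the second paragraph: one must verify carefully, from Definition \ref{definition2}, that for a type I splitting type the range $e_1+1<e_i\leq e_k$ of Proposition \ref{prop4} picks out only the value $0$, so that the whole collection of Fitting ideals defining $\overline{\Sigma}_{\overrightarrow{e}}(C,f)$ collapses, on the level of tangent spaces at $L$, to the single Fitting ideal defining $W^r_d(C)$. Once this collapse is granted, everything else is formal bookkeeping with tangent-space and local dimensions.
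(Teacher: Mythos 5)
Your proof is correct and takes essentially the same route as the paper's: it reduces via Proposition \ref{prop4} to the single Fitting ideal defining $W^r_d(C)$ (where the paper invokes Lemma \ref{lemma8}, you verify the range $e_1+1<e_i\leq e_k$ by hand for the type $B(0,b,y,r+1,0)$), and then concludes from the smoothness of $\overline{\Sigma}_{\overrightarrow{e}}(C,f)$ along $\Sigma_{\overrightarrow{e}}(C,f)$ (Proposition \ref{prop3}) by the tangent-space dimension count. Your explicit handling of the degenerate case $u(\overrightarrow{e})=0$ is a minor addition that the paper leaves implicit.
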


\begin{proof}
We know $Z=\overline{\Sigma}_{\overrightarrow {e}}(C,f)$ with $\overline {e}$ of type $B(0,b,y,r+1,0)$ for some $b \geq 1$.
We know $\overline{\Sigma}_{\overrightarrow {e}}(C,f)$ is the closure of $\Sigma_{\overrightarrow {e}}(C,f)$ and the scheme $\overline {\Sigma}_{\overrightarrow {e}}(C,f)$ is smooth at $L \in \Sigma _{\overrightarrow {e}}(C,f)$ because of Proposition \ref{prop3}.
From Proposition \ref{prop4} and taking into account Lemma \ref{lemma8} we find that the tangent space to $\overline{\Sigma}_{\overrightarrow{e}}(C,f)$ at $L \in \Sigma_{\overrightarrow {e}}(C,f)$ is exactly the tangent space to the scheme defined by the $(g-d+r)$-th Fitting ideal of $R^1\pi_*(\mathcal{P}_d)$.
From Definition \ref{definition5} we know this is the tangent space to the scheme $W^r_d(C)$ at $L$ and we obtain $\dim (T_{L}(W^r_d(C))=\dim (Z) (=\rho^r_d(g))$.
It follows the scheme $W^r_d(C)$ is smooth at $L$.
\end{proof}

\begin{proposition}\label{prop5}
Let $C$ be a general $k$-gonal curve and let $Z$ be an irreducible component of type II of some scheme $W^r_d(C)$ then the scheme $W^r_d(C)$ is smooth at a general point of $Z$.
\end{proposition}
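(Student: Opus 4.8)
The plan is to show that at a general point $L$ of $Z$ the tangent space to the scheme $W^r_d(C)$ has dimension $\dim(Z)$; since $Z$ is an irreducible component of $W^r_d(C)$ and a general $L\in Z$ lies on no other component, the equality $\dim T_L(W^r_d(C))=\dim(Z)=\dim\mathcal{O}_{W^r_d(C),L}$ forces the local ring to be regular, hence $W^r_d(C)$ to be smooth at $L$. Write $Z=\overline{\Sigma}_{\overrightarrow{e}}(C,f)$ with $\overrightarrow{e}$ of type $B(0,b,y,u,v)$ and $v>0$. Because $Z$ is of type II, the alternative $(b,v)=(1,0)$ allowed by Theorem~\ref{theorem3} is excluded, so $b\geq 2$; this inequality is what drives the whole argument. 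I take $L\in\Sigma_{\overrightarrow{e}}(C,f)$ general, so that by Proposition~\ref{prop3} the scheme $Z$ is smooth at $L$ and $\dim T_L(Z)=\dim(Z)$. It therefore suffices to prove $T_L(W^r_d(C))=T_L(Z)$.

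First I would identify the two tangent spaces in terms of cup-product maps. By Proposition~\ref{prop4} together with Lemma~\ref{lemma8}, and since $e_k=1$ while the only values $e_i$ with $e_1+1<e_i\leq e_k$ are $0$ and $1$ (in both cases $x>0$ and $x=0$, using $b\geq 2$), the space $T_L(Z)$ is the intersection of the tangent spaces at $L$ of the schemes cut out by the Fitting ideals for $n=0$ and $n=-1$. By Definition~\ref{definition5} the space $T_L(W^r_d(C))$ is exactly the tangent space of the $n=0$ scheme. Using \cite{ref9}, Chapter IV, Proposition 4.2 together with the identification $c$ of Remark~\ref{rem6}, the tangent space of the $n$-scheme is the annihilator, under the Serre pairing $H^0(C,K)\otimes H^1(C,\mathcal{O}_C)\to K$, of the image of $\mu_n:H^0(L+nM)\otimes H^0(K-L-nM)\to H^0(K)$. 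Hence it is enough to prove the single inclusion $\im(\mu_{-1})\subseteq\im(\mu_0)$: this gives $T_L(n{=}0)\subseteq T_L(n{=}-1)$, so the intersection defining $T_L(Z)$ collapses to $T_L(n{=}0)=T_L(W^r_d(C))$.

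The inclusion $\im(\mu_{-1})\subseteq\im(\mu_0)$ is the heart of the matter and the step I expect to be the main obstacle. Given $s\in H^0(L-M)$ and $t\in H^0(K-L+M)$ I want to rewrite $\mu_{-1}(s\otimes t)=st$ as an element of $\im(\mu_0)$. The key is the surjectivity of the multiplication map $m:H^0(K-L)\otimes H^0(M)\to H^0(K-L+M)$: granting it, write $t=\sum_j b_j\tau_j$ with $b_j\in H^0(K-L)$ and $\tau_j\in H^0(M)$, whence $st=\sum_j(s\tau_j)\,b_j$ with each $s\tau_j\in H^0(L)$, exhibiting $st\in\im(\mu_0)$. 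To establish surjectivity of $m$ I pass to $\mathbb{P}^1$ via $f_*$. By Serre duality the splitting type of $K-L$ is $(-e_k-2,\dots,-e_1-2)$, and by the projection formula $H^0(C,K-L+M)=H^0(\mathbb{P}^1,f_*(K-L)(1))$ while $f^*H^0(\mathcal{O}_{\mathbb{P}^1}(1))\subseteq H^0(M)$, so it suffices that the $\mathbb{P}^1$-multiplication $H^0(\mathbb{P}^1,f_*(K-L))\otimes H^0(\mathbb{P}^1,\mathcal{O}_{\mathbb{P}^1}(1))\to H^0(\mathbb{P}^1,f_*(K-L)(1))$ be surjective. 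This last map is the direct sum over the line-bundle summands of the standard maps $H^0(\mathcal{O}_{\mathbb{P}^1}(a))\otimes H^0(\mathcal{O}_{\mathbb{P}^1}(1))\to H^0(\mathcal{O}_{\mathbb{P}^1}(a+1))$, each of which is surjective unless $a=-1$. Thus $m$ is surjective precisely when no summand $\mathcal{O}_{\mathbb{P}^1}(-1)$ occurs in $f_*(K-L)$, i.e. when $-1$ is not among the $e_i$. For $\overrightarrow{e}$ of type $B(0,b,y,u,v)$ with $b\geq 2$ the entries lie in $\{-b-1,-b,0,1\}$ and never equal $-1$, so $m$ is surjective. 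This is exactly where the hypothesis $b\geq 2$ (equivalently, that $Z$ is a genuine component of type II) enters, and it is what makes the $n=-1$ Fitting condition redundant on tangent spaces.

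Combining these steps, $T_L(W^r_d(C))=T_L(n{=}0)=T_L(Z)$ has dimension $\dim(Z)$, so $W^r_d(C)$ is smooth at the general point $L$ of $Z$. The contrast with Corollary~\ref{cor1} is instructive: for type I only the single $n=0$ Fitting ideal survives the reduction of Proposition~\ref{prop4}, whereas for type II a second ideal $(n=-1)$ appears and the nontrivial point is precisely that its tangent condition is already implied by the $n=0$ one, via the surjectivity of $m$.
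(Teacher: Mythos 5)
Your proof is correct, but the central step is carried out by a genuinely different route than the paper's. Both arguments share the same frame: reduce, via Proposition \ref{prop4}, Lemma \ref{lemma8} and Proposition \ref{prop3}, to showing that the tangent space at $L$ of the $n=0$ Fitting scheme (which is $T_L(W^r_d(C))$) is contained in that of the $n=-1$ Fitting scheme. At that point the paper passes to $\mathbb{P}^1$ through Lemma \ref{lemma6}: a tangent vector $v$ for which $H^0(\mathbb{P}^1,\mathcal{E}_v)\rightarrow H^0(\mathbb{P}^1,E)$ is surjective is shown, by a case analysis on the splitting type of the rank-$2k$ bundle $\mathcal{E}_v$ (Claims 1--4 plus a counting argument in which $b\geq 2$ enters at the very end), to force $\mathcal{E}_v\cong E\oplus E$, whence surjectivity for every twist. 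You instead stay with the cup-product description of \cite{ref9}, Chapter IV, Proposition 4.2 --- legitimate here because $h^0(L+nM)=r(n)+1$ exactly for $L\in\Sigma_{\overrightarrow{e}}(C,f)$ and $n\in\{0,-1\}$, and Remark \ref{rem6} places both conditions in the same $H^1(C,\mathcal{O}_C)$ --- reduce to the inclusion $\im(\mu_{-1})\subseteq\im(\mu_0)$, and prove it from the surjectivity of $H^0(C,K-L)\otimes H^0(C,M)\rightarrow H^0(C,K-L+M)$, checked on $\mathbb{P}^1$ from the splitting type of $f_*(K-L)$: its summands are $\mathcal{O}_{\mathbb{P}^1}(-e_i-2)$, and $b\geq 2$ is exactly what excludes a summand $\mathcal{O}_{\mathbb{P}^1}(-1)$. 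This is closer in spirit to the paper's own proofs of Propositions \ref{prop4} and \ref{prop6} (multiplication maps, here on the residual side rather than on the $L$ side), it is shorter than the splitting-type analysis of $\mathcal{E}_v$, and, like the paper's argument, it is valid for arbitrary $f$, generality being used only through Proposition \ref{prop3}. One small point to tighten: what you call ``Serre duality'' for the splitting type of $f_*(K-L)$ is really duality for the finite morphism $f$; either invoke it as such or deduce the splitting type by comparing $h^0(K-L+nM)=h^1(L-nM)$ with Formulas \ref{vgl2} and \ref{vgl4}.
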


\begin{proof}
We have $Z = \overline{\Sigma}_{\overrightarrow {e}}(C,f)$ with $\overrightarrow {e}$ of type $B(0,b,y,u,v)$ with $v>0$, $u>0$, $u+2v = r+1$ and $b>1$ and such that Formula \ref{vgl1} holds.
Let $L\in \Sigma _{\overrightarrow {e}}(C,f)$.
Because of Proposition \ref{prop4} the tangent space of the scheme $Z$ at $L$ is equal to the intersection of the tangent spaces of the schemes defined by the $(g-(d+nk)+r(n))$-th Fitting ideal of $R^1\pi_*(\mathcal{P}_d(n))$ with $n \geq 0$.
Therefore intersecting the tangent space of $W^r_d(C)$ at $L$ with the tangent space to the scheme defined by the $(g-(d-k)+v-1)$-th Fitting ideal of $R^1\pi_*(\mathcal{P}_d(-1))$ we obtain the tangent space of $\overline {\Sigma}_{\overrightarrow {e}}(C,f)$ at $L$.
Because of Proposition \ref{prop3} this tangent space has dimension equal to $\dim (Z)$.
Therefore it is sufficient to prove that each tangent vector $v \in T_{L}(W^r_d(C))$ is a tangent vector to the scheme defined by the $(g-(d-k)+v-1)$-th Fitting ideal of $R^1\pi_*(\mathcal{P}_d(-1))$.
Because of Lemma \ref{lemma6} we need to prove the following statement.
Let $v \in T_{L}(\Pic^d(C))=\Ext^1(L,L)$ and let $0 \rightarrow E \rightarrow \mathcal{E}_v \rightarrow E \rightarrow 0$ be the extension of locally free sheaves on $\mathbb{P}^1$ corresponding to $df(v)$ (see Notation \ref{not3}).
If $H^0(\mathbb{P}^1,\mathcal{E}_v) \rightarrow H^0(\mathbb{P}^1,E)$ is surjective then $H^0(\mathbb{P}^1,\mathcal{E}_v(-1)) \rightarrow H^0(\mathbb{P}^1,E(-1))$ is surjective.

Since $\mathcal{E}_v$ is a locally free sheaf of rank $2k$ on $\mathbb{P}^1$ it is isomorphic to $\mathcal{O}(\overrightarrow {e'})$ for some splitting sequence $\overrightarrow {e'}=(e'_1 \cdots , e'_{2k})$ such that $\sum_{i=1}^{2k}e'_i=2 \sum_{i=1}^k e_i$ (because $\Lambda^{2k}\mathcal{E}_v\cong \Lambda ^k(E) \otimes \Lambda ^k(E)$).
It is enough to prove that if $H^0(\mathbb{P}^1,\mathcal{E}_v) \rightarrow H^0(\mathbb{P}^1,E)$ is surjective then $\overrightarrow {e'}$ if of type $B(0,b,2y,2u,2v)$ (in that case $\mathcal{E}_v \cong E \oplus E$).

$\underline {\text{Claim 1}}$: $e'_1 \geq -b-1$.

We have an exact sequence $H^1(\mathbb{P}^1, E(b)) \rightarrow H^1 (\mathbb{P}^1, \mathcal {E}_v(b)) \rightarrow H^1(\mathbb{P}^1, E(b))$.
Since $H^1(\mathbb{P}^1,E(b))=0$ this implies $H^1(\mathbb{P}^1,\mathcal{E}_v(b))=0$.
This implies $-e'_1-b-2<0$, hence $e'_1 >-b-2$.

$\underline {\text{Claim 2}}$: If $e'_{x'}=-b-1$ and $e'_{x'+1}\geq -b$ then $x' \leq 2x$ with $x=k-y-u-v$.
(Of course, if $e'_1 \geq -b$ then we put $x'=0$ and there is nothing to prove.)

In the exact sequence $H^1(\mathbb{P}^1,E(b-1))\rightarrow H^1(\mathbb{P}^1,\mathcal{E}_v(b-1)) \rightarrow H^1(\mathbb{P}^1,E(b-1)) \rightarrow 0$ we have $\dim (H^1(E(b-1)))=x$ and because of Claim 1 we have $\dim (H^1(\mathbb{P}^1,\mathcal{E}_v(b-1)))=x'$.
This proves Claim 2.

$\underline {\text{Claim 3}}$: $e'_{2k} \leq 1$.

We have the exact sequence $0 \rightarrow H^0(\mathbb{P}^1,E(-2))\rightarrow H^0(\mathbb{P}^1,\mathcal{E}_v(-2)) \rightarrow H^0(\mathbb{P}^1,E(-2))$.
Since $H^0(\mathbb{P}^1,E(-2))=0$ this implies $H^0(\mathbb{P}^1,\mathcal{E}_v(-2))=0$
This implies $e'_{2k}-2<0$, hence $e'_{2k}<2$.

$\underline{ \text{Claim 4}}$: Let $v'$ be the number of elements $e'_i$ equal to 1, then $v' \leq 2v$.

We have the exact sequence $0 \rightarrow H^0(\mathbb{P}^1,E(-1)) \rightarrow H^0(\mathbb{P}^1,\mathcal{E}_v(-1)) \rightarrow H^0(\mathbb{P}^1,E(-1))$ with $\dim (H^0 (\mathbb{P}^1,E(-1))=v$ and because of Claim 3 we have $\dim (H^0 (\mathbb{P}^1,\mathcal{E}_v(-1)=v'$.
This proves Claim 4.

Suppose $0 \rightarrow H^0(\mathbb{P}^1,E) \rightarrow H^0(\mathbb{P}^1,\mathcal{E}_v) \rightarrow H^0(\mathbb{P}^1,E) \rightarrow 0$ is exact.
We write $u'$ to denote the number of elements $e'_i$ equal to 0.
We have $\dim (H^0(\mathbb{P}^1,E))=u+2v$ and because of Claim 3 we have $\dim (H^0(\mathbb{P}^1,\mathcal{E}_v))=u'+2v'$.
This implies
\[
u'+2v'=2u+4v \text { .}
\]

Let $v'=2v$, then also $u'=2u$.
Since $\sum_{i=1}^{2k}e'_i=2\sum_{i=1}^ke_i$ we need $\sum _{i=1}^{2x+2y}e'_i=2x(-b-1)+2y(-b)$.
Since $e'_i=-b-1$ for $1 \leq i \leq x'$ with $x'\leq 2x$ (because of Claims 1 and 2) and $e'_i\geq -b$ for $x'+1 \leq i \leq 2x+2y$, we need $x'=2x$ and $e'_i=-b$ for $2x+1 \leq i \leq 2x+2y$.
So in this case we obtain $\mathcal{E}_v\cong E \oplus E$.

In case $v'\neq 2v$ then because of Claim 4 we have $v'<2v$ and $u'=2u+2(2v-v')$.
From $\sum_{i=1}^{2k}e'_i=2 \sum_{i=1}^k e_i$ we obtain
\[
\sum_{i=1}^{2k-v'+1}e'_i = 2x(-b-1)+2y(-b)+(2v-v') \text { .}
\]
Because of Claim 1 we also have
\[
\sum_{i=1}^{2k-v'+1}e'_i \geq x'(-b-1)+(2x-x'+2y-(2v-v'))(-b) \text { .}
\]
This implies
\[
2v-v' \geq (2x-x')+(2v-v')b \text { .}
\]
Since $2x \geq x'$ and $b\geq 2$ this is impossible.
It follows that $v'\neq 2v$ cannot occur, finishing the proof.
\end{proof}

In order to prove that irreducible components of type III of $W^r_d(C)$ are generically smooth in case $C$ is a general $k$-gonal curve, first we have to sharpen the statement of Proposition \ref{prop4} in relevant cases.

\begin{proposition}\label{prop6}
Let $f:C \rightarrow \mathbb{P}^1$ be a morphism of degree $k$ with $C$ a smooth curve of genus $C$.
Let $L \in \Sigma_{\overrightarrow {e}}(C,f)$ and let $e' \in \mathbb{Z}$ such that for some $i \geq 2$ we have $e_{i-1}+2 \leq e' < e_i$.
The tangent space at $L$ to the scheme defined by the $(g-(d-ke')+r(-e'))$-th Fitting ideal of $R^1\pi_*(\mathcal{P}_d(-e'))$ is equal to the tangent space of the scheme defined by the $(g-(d-ke_i)+r(-e_i))$-th Fitting ideal of $R^1\pi_*(\mathcal{P}_d(-e_i))$.
\end{proposition}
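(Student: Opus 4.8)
The plan is to translate the statement into an equality of images of multiplication maps, as in the proof of Proposition \ref{prop4}, and then to supply the one inclusion that Proposition \ref{prop4} does not already give, by means of the base point free pencil trick.

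First I would recall from the proof of Proposition \ref{prop4} (using \cite{ref9}, Chapter IV, Proposition 4.2, and the identification $c$ of Remark \ref{rem6}) that for each integer $n$ the tangent space at $L$ to the scheme defined by the $(g-(d-kn)+r(-n))$-th Fitting ideal of $R^1\pi_*(\mathcal{P}_d(-n))$ is the orthogonal complement, under Serre duality in $H^0(C,K)$, of the image $V_n$ of the multiplication map
\[
\mu_n : H^0(C,L-nM) \otimes H^0(C,K-(L-nM)) \rightarrow H^0(C,K) \text{ .}
\]
Thus the assertion is equivalent to $V_{e'}=V_{e_i}$. Iterating the inclusion proved in Proposition \ref{prop4} over the values $e',e'+1,\dots,e_i-1$ (all of which lie in the range $e_{i-1}<\cdot\le e_i-1$ where that proposition applies, since $e'\ge e_{i-1}+2$) already yields $V_{e'}\subseteq V_{e_i}$, so it remains to prove $V_{e_i}\subseteq V_{e'}$. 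As the same iteration reduces everything to consecutive steps, it suffices to show $V_{e'+1}\subseteq V_{e'}$ for every $e'$ with $e_{i-1}+2\le e'\le e_i-1$.

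To obtain this reverse inclusion I would argue symmetrically to Proposition \ref{prop4}, now decomposing the second factor. If the multiplication map
\[
m' : H^0(C,K-L+e'M) \otimes H^0(C,M) \rightarrow H^0(C,K-L+(e'+1)M)
\]
is surjective, then writing a section $b\in H^0(C,K-L+(e'+1)M)$ as $\sum_j b'_j s_j$ with $b'_j\in H^0(C,K-L+e'M)$ and $s_j\in H^0(C,M)$, any generator $\mu_{e'+1}(a\otimes b)=\sum_j (a\,s_j)\,b'_j$ lies in $V_{e'}$, because $a\,s_j\in H^0(C,L-e'M)$. Hence $V_{e'+1}\subseteq V_{e'}$, and the problem is reduced to the surjectivity of $m'$. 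For this I would use the base point free pencil $V=f^*H^0(\mathbb{P}^1,\mathcal{O}_{\mathbb{P}^1}(1))\subseteq H^0(C,M)$ together with the base point free pencil trick (\cite{ref9}, p.126) applied to $F=K-L+e'M$: the image of $V\otimes H^0(F)\rightarrow H^0(F+M)$ has dimension $2h(e')-h(e'-1)$, where $h(n):=h^0(C,K-L+nM)$, so $m'$ is surjective as soon as $2h(e')=h(e'-1)+h(e'+1)$. By the projection formula and Serre duality $h(n)=h^1(C,L-nM)=\sum_{j=1}^k\max\{0,n-e_j-1\}$, a convex piecewise linear function of $n$ whose slope changes only at the points $n=e_j+1$. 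The hypothesis $e_{i-1}+2\le e'\le e_i-1$ places $e'-1$ strictly between the consecutive entries $e_{i-1}$ and $e_i$, so $e'\neq e_j+1$ for all $j$; hence $h$ has no slope change at $e'$, and the linear relation $2h(e')=h(e'-1)+h(e'+1)$ holds.

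The decisive point, and the one I expect to be the main obstacle, is precisely this absence of a slope change at $n=e'$: it is exactly what forces the hypothesis $e'\ge e_{i-1}+2$. Indeed for $e'=e_{i-1}+1$ there is a kink of $h$ at $e'$, the relation $2h(e')=h(e'-1)+h(e'+1)$ fails, $m'$ need not be surjective, and the tangent space genuinely grows, which is consistent with Proposition \ref{prop4} giving only an inclusion in that case. I would finally chain the consecutive equalities $V_{e'}=V_{e'+1}$ for $e'=e_{i-1}+2,\dots,e_i-1$ to conclude $V_{e'}=V_{e_i}$, hence the asserted equality of tangent spaces, taking care that every intermediate value stays in the range $[e_{i-1}+2,e_i-1]$ where the pencil-trick computation is valid.
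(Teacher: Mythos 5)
Your proposal is correct, and it replaces the paper's key computation by a genuinely different (and shorter) argument. The paper also reduces to the consecutive step $e'\mapsto e'+1$ and also establishes the surjectivity of $m_2:H^0(C,K-(L-e'M))\otimes H^0(C,M)\rightarrow H^0(C,K-(L-(e'+1)M))$ by the base point free pencil trick plus a dimension count (your convexity formulation of $n\mapsto h^1(L-nM)$ is the same computation in different clothing, and your reduction to the pencil $V=f^*H^0(\mathbb{P}^1,\mathcal{O}_{\mathbb{P}^1}(1))$ is if anything more careful than the paper's implicit assumption that $h^0(M)=2$). Where you diverge is in how the equality $\im(\mu_1)=\im(\mu_2)$ is extracted from that surjectivity: the paper keeps only the inclusion $\im(\mu_1)\subseteq\im(\mu_2)$ from Proposition \ref{prop4} and then proves the two images have the same \emph{dimension}, by introducing the auxiliary map $\mu'_1$ on $(H^0(L-(e'+1)M)\otimes H^0(M))\otimes H^0(K-(L-e'M))$, constructing a surjection $\mu:\ker(\mu'_1)\rightarrow\ker(\mu_2)$, and computing $\ker(\mu)$ with a second application of the pencil trick. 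You instead prove the reverse inclusion $\im(\mu_2)\subseteq\im(\mu_1)$ directly, by decomposing the \emph{second} tensor factor through the surjective $m_2$ — exactly symmetric to how Proposition \ref{prop4} decomposes the first factor through $m_1$. This bypasses all the kernel bookkeeping; the only thing the paper's route buys in exchange is the explicit identification of $\ker(\mu)$ with $N$ copies of $H^0(C,K-(L-(e'-1)M))$, which is not used elsewhere. Your handling of the range of validity (no kink of $h$ at any $n\in[e_{i-1}+2,e_i-1]$, so the chaining closes up to $e_i$) is the right check and is correct.
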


\begin{proof}
It is enough to prove that the tangent space at $L$ of the scheme defined by the $(g-(d-ke')+r(-e'))$-th Fitting ideal of $R^1\pi_*(\mathcal{P}_d(-e'))$ is equal to the tangent space of the scheme defined by the $(g-(d-k(e'+1)+r(-e'-1))$-th Fitting ideal of $R^1\pi_*(\mathcal{P}_d(-e'-1))$.
We continue to use the arguments and notations of the proof of Proposition \ref{prop4}; in particular it is enough to prove $\dim (\im (\mu_1))=\dim (\im (\mu_2))$.

Since we assume $e'\geq e_{i-1}+2$ we can use the base point free pencil trick to prove that also the following multiplication map is surjective
\[
m_2 : H^0(C,K - (L -e'M))\otimes H^0(C,M) \rightarrow H^0(C,K -(L -(e'+1)M)) \text { .}
\]
Indeed, because of the base point free pencil trick we have
\begin{multline*}
\dim (\ker m_2)=\dim (H^0(C,K - (L -(e'-1)M))\\
=N+2(k-i+1)-(d-k(e'-1)+g-1) \text { .}
\end{multline*}
Since $h^0(K-L+e'M)=N+(k-i+1)-(d-ke'+g-1)$ we find $\dim (\im (m_2))=N-(d-k(e'+1)+g-1)=h^0(K-L+(e'+1)M)$.

Now we use the multiplication map
\[
\mu'_1 : (H^0(C,L- (e'+1)M)\otimes H^0(C,M)) \otimes H^0(C,K - (L -e'M))\rightarrow H^0(C,K) \text { .}
\]
From the surjectivity of $m_1$ it follows that $\im (\mu'_1)=\im (\mu_1)$.
Taking into account the results of Proposition \ref{prop5}, we have to prove that $\dim (\im (\mu'_1))=\dim (\im (\mu_2))$.

As in the proof of Proposition \ref{prop5}, let $a_1, \cdots , a_N$ be  a base for $H^0(C,L -(e'+1)M)$ and let $s,t$ be a base for $H^0(C,M)$.
Let $\sum_{i=1}^N((a_i\otimes s)\otimes x_i+(a_i \otimes t) \otimes y_i) \in \ker (\mu'_1)$ with $x_i,y_i \in H^0(C,K -(L - e' M))$.
Then of course $\sum_{i=1}^Na_i \otimes (sx_i+ty_i) \in \ker (\mu_2)$, so we obtain a linear map $\mu : \ker (\mu'_1) \rightarrow \ker (\mu _2)$.

Let $\sum_{i=1}^N a_i \otimes z_i \in \ker (\mu_2)$ with $z_i \in H^0(C,K-(L-(e'+1)M))$.
Because $m_2$ is surjective there exist $x'_i,y'_i \in H^0(C,K -(L -e'M))$ such that $z_i=x'_is+y'_it$.
We obtain $\sum_{i=1}^N a_i \otimes z_i =\mu (\sum_{i=1}^N((a_i \otimes s)\otimes x'_i +(a_i \otimes t) \otimes y'_i))$ proving that $\mu$ is surjective.

Let $\sum_{i=1}^N ((a_i \otimes s) \otimes x_i+(a_i \otimes t) \otimes y_i) \in \ker (\mu)$ with $x_i, y_i \in H^0(C,K -(L - e'M))$.
We have $\sum_{i=1}^N a_i\otimes (sx_i+ty_i) = 0$ in $H^0 (C,L -(e'+1)M)\otimes H^0(C,K -(L-(e'+1)M))$.
This implies for $1 \leq i \leq N$ we have $sx_i+ty_i=0$ in $H^0(C,K -(L -(e'+1)M))$.
Using once more the base point free pencil trick, this is equivalent to the existence of $z_i \in H^0(C,K -(L- (e'-1) M))$ with $x_i=tz_i$ and $y_i=-sz_i$.
It follows
\[
\sum_{i=1}^N (a_i\otimes s \otimes x_i + a_i \otimes t \otimes y_i) = \sum _{i=1}^N(a_i \otimes s \otimes tz_i-a_i\otimes t \otimes sz_i) \text { .}
\]
This proves
\begin{multline*}
\dim (\ker \mu)=N \cdot \dim H^0(C,K -(L-(e'-1)M))\\
=N \cdot (N+2(k-i+1)-(d-k(e'-1)-g+1)) \text { .}
\end{multline*}

We find
\[
\dim (\ker (\mu_2))= \dim (\ker (\mu'_1))-N\cdot (N+2(k-i+1)-(d-k(e'-1)-g+1)) \text { .}
\]
This implies
\begin{multline*}
\dim (\im \mu_2))=N\cdot (N-(d-k(e'+1)-g+1))-\dim (\ker (\mu_2))\\
=N\cdot (N-(d-k(e'+1)-g+1))-\dim (\ker (\mu'_1))+N\cdot (N+2(k-i+1)-(d-k(e'-1)-g+1))\\
=2N\cdot (N+(k-i+1)-(d-ke'-g+1))-\dim (\ker (\mu'_1))=\dim (\im (\mu'_1)) \text { .}
\end{multline*}
\end{proof}

\begin{corollary}\label{cor2}
Let $C$ be a general $k$-gonal curve and let $Z$ be an irreducible component of type III of some scheme $W^r_d(C)$ then the scheme $W^r_d(C)$ is smooth at a general point of $Z$.
\end{corollary}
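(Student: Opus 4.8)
The plan is to test smoothness at a general point $L$ of $Z$, which then lies in $\Sigma_{\overrightarrow{e}}(C,f)$, and to show that the Zariski tangent space to the scheme $W^r_d(C)$ at $L$ has dimension $\dim(Z)$. Write $\overrightarrow{e}$ of type $B(a,b,y,u,v)$ with $a\geq 1$. First I would record that necessarily $b\geq 2$. Indeed, if $(b,v)=(1,0)$ and $a\geq 1$, then replacing one entry $-1$ by $0$ and one entry $a$ by $a-1$ produces a splitting type $\overrightarrow{e}''$ with the same total degree and the same $\sum_i\max\{0,e_i+1\}$, while $\overrightarrow{e}\leq\overrightarrow{e}''$ strictly; hence $\overline{\Sigma}_{\overrightarrow{e}}(C,f)\subsetneq\overline{\Sigma}_{\overrightarrow{e}''}(C,f)\subseteq W^r_d(C)$ and $\overline{\Sigma}_{\overrightarrow{e}}(C,f)$ would not be a component. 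So a type III component has $b\geq 2$, and this is precisely what allows Proposition \ref{prop6} to be applied across the value $0$.

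The tangent space to $W^r_d(C)$ at $L$ is the tangent space $T_0$ to the scheme defined by the $(g-d+r)$-th Fitting ideal of $R^1\pi_*(\mathcal{P}_d)$ (Definition \ref{definition5}), that is, the case $n=0$. Since $a\geq 1$ and $b\geq 2$, the integer $e'=0$ satisfies $e_{i-1}+2\leq 0<e_i$, where $e_{i-1}=-b$ and $e_i=a$ are the two splitting values straddling $0$. Thus Proposition \ref{prop6} applies and yields $T_0=T_{-a}$, where $T_{-a}$ denotes the tangent space at $L$ to the scheme defined by the $(g-(d-ka)+r(-a))$-th Fitting ideal of $R^1\pi_*(\mathcal{P}_d(-a))$. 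This is the crucial reduction: it trades the single Fitting ideal cutting out $W^r_d(C)$ for the twisted one adapted to subtracting $aM$.

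Next I would transport $T_{-a}$ through the translation $L\mapsto L-aM$. Put $L'=L-aM$ and $\overrightarrow{e}'=\overrightarrow{e}-a$, which is of type $B(0,a+b,y,u,v)$, so that $W':=\overline{\Sigma}_{\overrightarrow{e}'}(C,f)=Z-aM$ is a component of $W^{r'}_{d'}(C)$ of type I (if $v=0$) or type II (if $v>0$), with $d'=d-ak$ and $r'=r(-a)$. By Remark \ref{rem6} the canonical identification of tangent spaces is compatible with translation by multiples of $M$, and by the projection formula $\mathcal{E}_{v'}=\mathcal{E}_v(-a)$ and $E'=E(-a)$ for the translated tangent vector $v'$; hence, via Lemma \ref{lemma6}, $T_{-a}$ is carried isomorphically onto the tangent space $T'_0$ to $W^{r'}_{d'}(C)$ at $L'$. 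Now Corollary \ref{cor1} (when $v=0$) or Proposition \ref{prop5} (when $v>0$) gives that $W^{r'}_{d'}(C)$ is smooth at $L'$, so $\dim(T'_0)=\dim(W')=\dim(Z)$. Chaining the equalities yields $\dim(T_0)=\dim(T_{-a})=\dim(T'_0)=\dim(Z)$, and since $Z$ is a component of $W^r_d(C)$ through the general point $L$ this forces $W^r_d(C)$ to be smooth at $L$.

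The main obstacle is the very first link: the scheme $W^r_d(C)$ is cut out by the single Fitting ideal at $n=0$, whereas the smoothness input for types I and II concerns the Fitting ideal at $n=-a$ after translation, and a priori these need not match — and indeed they genuinely differ precisely when $b=1$, which is exactly why that case fails to produce a type III component. Proposition \ref{prop6} is what bridges this gap, and the delicate verification is that $0$ lies in the admissible range $e_{i-1}+2\leq e'<e_i$, which is guaranteed by $b\geq 2$. Once this is in place, the translation identification of the Fitting-ideal tangent spaces and the appeal to the type I and II smoothness are formal consequences of Remark \ref{rem6}, Lemma \ref{lemma6}, Corollary \ref{cor1} and Proposition \ref{prop5}.
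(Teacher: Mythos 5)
Your proof is correct and follows essentially the same route as the paper: identify the tangent space of $W^r_d(C)$ at a general point $L$ of $Z$ with the tangent space of the Fitting-ideal scheme for the twist by $-a$ via Proposition \ref{prop6}, carry it through the translation by $-aM$, and invoke Corollary \ref{cor1} or Proposition \ref{prop5} for the type I or type II component $Z-aM$. Your preliminary verification that a type III component necessarily has $b\geq 2$ (so that $0$ lies in the admissible range $e_{i-1}+2\leq e'<e_i$ of Proposition \ref{prop6}) is a worthwhile addition: the paper imposes the corresponding inequality on the translation amount without comment, and the exclusion of the case $(b,v)=(1,0)$ with $a\geq 1$ is not made explicit in the statement of Theorem \ref{theorem3}.
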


\begin{proof}
We have $Z=Z'+e'M$ with $Z' =\overline{\Sigma}_{\overrightarrow{e}}(C,f)$ and $\overrightarrow {e}$ of type $B(0,b,y,u,v)$ such that $b+e' \leq -2$.
Let $L = L' +e' M$ with $L'\in \Sigma _{\overrightarrow {e}}(C,f)$.
From Proposition \ref{prop6} it follows that the tangent space of $W^r_d(C)$ at $L$ is equal to the tangent space of the scheme defined by the $(g-(d-e'k)+r(0))$-th Fitting ideal of $R^1\pi_*(\mathcal{P}_{d-e'k})$.
In case $Z'$ is of type I then it follows from Corollary \ref{cor1} that the dimension of the tangent space is equal to $\dim (Z')=\dim (Z)$.
In case $Z'$ is of type II then the same conclusion follows from Proposition \ref{prop5}.
\end{proof}

Putting together Corollary \ref{cor1}, Proposition \ref{prop5} and Corollary \ref{cor2} we have proved Theorem B.

\section{A smoothness result}\label{section5}

Up to now, if $Z=\overline{\Sigma}_{\overrightarrow{e}}(C,f)$ is a component of type I of some $W^r_d(C)$, in case $f$ is general then $Z$ and $W^r_d(C)$ are smooth as schemes along $\Sigma_{\overrightarrow {e}}(C,f)$ (this follows from Corollary \ref{cor1} and from \cite{ref7} because of Proposition \ref{prop3}).
The methods of this paper imply we get more strata in the smooth locus of the schemes $Z$ and $W^r_d(C)$.

\begin{proposition}\label{prop7}
Let $\overrightarrow{e'}$ be a splitting type of type $B(0,b,y,u,v)$ with $u,v \neq 0$.
Let $\overrightarrow{e}$ be the splitting type of type $B(0,b',y',v,0)$ obtained by taking $\overrightarrow {e'}-1$ and making it of balanced plus balanced type without changing the positive part.
In case $f$ is general then $\Sigma _{\overrightarrow {e'}-1}(C,f)$ is contained in the smooth locus of the scheme $W^{v-1}_d(C))$ with $d$ determined by Formula \ref{vgl1} using $\overrightarrow {e}$ (in particular it is also contained in the smooth locus of $\overline{\Sigma}_{\overrightarrow {e}}(C,f)$).

\end{proposition}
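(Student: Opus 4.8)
The plan is to prove the stronger, scheme-theoretic statement that $W^{v-1}_d(C)$ is smooth at every point $L\in\Sigma_{\overrightarrow{e'}-1}(C,f)$, by computing the dimension of its Zariski tangent space and checking that it equals the expected value $\rho^{v-1}_d(g)$. First I would record the numerology. Writing $\overrightarrow{e'}-1=(\overbrace{-b-2,\cdots,-b-2}^{x},\overbrace{-b-1,\cdots,-b-1}^{y},\overbrace{-1,\cdots,-1}^{u},\overbrace{0,\cdots,0}^{v})$ in the notation of Definition \ref{definition2}, Equation \ref{vgl2} gives $h^0(L)=v$ and $h^0(L-M)=h^0(\mathcal{O}(\overrightarrow{e'}-2))=0$. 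Thus $L$ is free from $M$ (Definition \ref{definition4}), so $L$ lies in the open subset $\Pic^d_f(C)$, where $W^{v-1}_d(C)$ restricts to $W^{v-1}_{d,f}(C)$. Since $u,v,y\geq 1$ we have $v-1\leq k-3\leq k-2$, so by Theorem A (Proposition \ref{prop1} and the discussion following it) $W^{v-1}_{d,f}(C)$ satisfies the classical Brill--Noether statements: it is irreducible of dimension $\rho^{v-1}_d(g)=g-h^0(L)h^1(L)$. Hence $\dim_L W^{v-1}_d(C)=\rho^{v-1}_d(g)$, and it suffices to show $\dim T_L(W^{v-1}_d(C))=\rho^{v-1}_d(g)$.

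Next I would transfer the tangent-space computation to $\mathbb{P}^1$. By Lemma \ref{lemma6} with $n=0$, a vector $v\in T_L(\Pic^d(C))=\Ext^1_C(L,L)$ is tangent to $W^{v-1}_d(C)$ if and only if $H^0(\mathbb{P}^1,\mathcal{E}_v)\to H^0(\mathbb{P}^1,E)$ is surjective, where $E=f_*(L)=\mathcal{O}(\overrightarrow{e'}-1)$ and $\mathcal{E}_v$ is the extension of $E$ by $E$ whose class is $df(v)\in\Ext^1_{\mathbb{P}^1}(E,E)$ (Notation \ref{not3}). In the long exact cohomology sequence of $0\to E\to\mathcal{E}_v\to E\to 0$ this surjectivity is equivalent to the vanishing of the connecting homomorphism $H^0(E)\to H^1(E)$, which is cup product with the class $df(v)$. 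Since this map depends only on, and linearly on, the class $\xi=df(v)$, I introduce the linear map $\Phi\colon\Ext^1_{\mathbb{P}^1}(E,E)\to\Hom(H^0(E),H^1(E))$ sending a class to the induced connecting map, and obtain $T_L(W^{v-1}_d(C))=df^{-1}(\ker\Phi)$. Because $f$ is general and $L\in\Sigma_{\overrightarrow{e'}-1}(C,f)$, the map $df$ is surjective (Remark \ref{rem5}, \cite{ref7}) with $\dim\ker(df)=g-u(\overrightarrow{e'}-1)$, so $\dim T_L(W^{v-1}_d(C))=(g-u(\overrightarrow{e'}-1))+\dim\ker\Phi$.

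The main step is then the purely cohomological computation of $\dim\ker\Phi$ for $E=\mathcal{O}(\overrightarrow{e'}-1)$. Decomposing $\Ext^1_{\mathbb{P}^1}(E,E)=\bigoplus_{i,j}H^1(\mathcal{O}(a_j-a_i))$ and $\Hom(H^0(E),H^1(E))=\bigoplus_{i,j}\Hom(H^0(\mathcal{O}(a_i)),H^1(\mathcal{O}(a_j)))$, where $a_1\leq\cdots\leq a_k$ are the entries of $\overrightarrow{e'}-1$, the block of $\Phi$ coming from the $(i,j)$ summand is the multiplication $H^1(\mathcal{O}(a_j-a_i))\otimes H^0(\mathcal{O}(a_i))\to H^1(\mathcal{O}(a_j))$. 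The decisive feature of $\overrightarrow{e'}-1$ is that its only non-negative entries are the $v$ copies of $0$: for $a_i=0$ the source $H^0(\mathcal{O}_{\mathbb{P}^1})$ is one-dimensional and multiplication is an isomorphism onto $H^1(\mathcal{O}(a_j))$, while for $a_i<0$ one has $H^0(\mathcal{O}(a_i))=0$. I therefore expect to conclude that $\Phi$ is surjective onto $\Hom(H^0(E),H^1(E))$, of dimension $h^0(E)h^1(E)=h^0(L)h^1(L)$, so that $\dim\ker\Phi=u(\overrightarrow{e'}-1)-h^0(L)h^1(L)$.

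Substituting gives $\dim T_L(W^{v-1}_d(C))=g-h^0(L)h^1(L)=\rho^{v-1}_d(g)$, which equals $\dim_L W^{v-1}_d(C)$ and hence proves smoothness at $L$. For the final assertion I would note that $\overline{\Sigma}_{\overrightarrow{e}}(C,f)$ is a closed subscheme of $W^{v-1}_d(C)$ (Remark \ref{rem4}) with the same support near $L$ (both restrict to $W^{v-1}_{d,f}(C)$ on $\Pic^d_f(C)$); since $W^{v-1}_d(C)$ is now reduced at $L$, the two schemes coincide there, so $\Sigma_{\overrightarrow{e'}-1}(C,f)$ also lies in the smooth locus of $\overline{\Sigma}_{\overrightarrow{e}}(C,f)$. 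I expect the surjectivity of $\Phi$ to be the main obstacle: it is the one place where the precise shape of $\overrightarrow{e'}-1$, rather than only its numerical invariants, is used, and it is where the trivial summands $\mathcal{O}_{\mathbb{P}^1}$ of $E$ must be isolated to control the cup product.
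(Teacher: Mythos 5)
Your argument is correct, but it proves the proposition by a genuinely different route than the paper. The paper works entirely through the Fitting-ideal scheme structures: it invokes Proposition \ref{prop4} to cut the tangent space of $\overline{\Sigma}_{\overrightarrow{e'}-1}(C,f)$ down to the $n=0$ and $n=1$ Fitting ideals, Proposition \ref{prop3} for smoothness of that scheme along $\Sigma_{\overrightarrow{e'}-1}(C,f)$, the reducedness of $\overline{\Sigma}_{\overrightarrow{e}}(C,f)$ from \cite{ref6}, and the codimension count of Remark \ref{rem1} to conclude that $\overline{\Sigma}_{\overrightarrow{e}}(C,f)$ is smooth at $L$; only then does it identify $T_L(\overline{\Sigma}_{\overrightarrow{e}}(C,f))$ with $T_L(W^{v-1}_d(C))$ via \cite{ref9}, IV.4.2(ii). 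You instead compute $\dim T_L(W^{v-1}_d(C))$ head-on: Lemma \ref{lemma6} with $n=0$, the observation that the connecting map $H^0(E)\to H^1(E)$ is cup product with $df(v)$, surjectivity of $df$ from \cite{ref7}, and the block decomposition showing the cup-product map $\Phi$ surjects onto $\Hom(H^0(E),H^1(E))$ because every non-negative summand of $\mathcal{O}(\overrightarrow{e'}-1)$ is trivial. Combined with the determinantal lower bound $\dim_L W^{v-1}_d(C)\geq\rho^{v-1}_d(g)$ (or your identification of the unique component through $L$, which is valid since type II and III components miss $\Pic^d_f(C)$), this gives smoothness, and your deduction of the statement for $\overline{\Sigma}_{\overrightarrow{e}}(C,f)$ — a closed subscheme with the same support inside a scheme reduced at $L$ — is sound. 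Your proof is shorter and bypasses Propositions \ref{prop2}, \ref{prop3} and \ref{prop4} entirely; what it gives up is the local description of $\overline{\Sigma}_{\overrightarrow{e'}-1}(C,f)$ by the expected number of equations inside $\overline{\Sigma}_{\overrightarrow{e}}(C,f)$, and its key step (surjectivity of $\Phi$) is exactly the feature that fails when the splitting type has summands of positive degree, which is why the paper's heavier Fitting-ideal machinery is still needed for the type II and type III cases treated in Proposition \ref{prop5} and Corollary \ref{cor2}.
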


\begin{proof}
From Proposition \ref{prop4} we know the tangent space to the scheme $\overline{\Sigma}_{\overrightarrow {e'}-1}(C,f)$ at $L\in \Sigma_{\overrightarrow {e'}-1}(C,f)$ is equal to the intersection of the tangent dimensions of the schemes defined by the $(g-(d+k)+r'(1))$-th Fitting ideal of $R^1\pi_*(\mathcal{P}_d(1))$ and the $(g-d+r'(0))$-th Fitting ideal of $R^1\pi_*(\mathcal{P}_d)$ (we use $r'(i)$ related to $\overrightarrow {e'}-1$).
From Proposition \ref{prop3} we know this intersection has dimension equal to $\dim (\overline {\Sigma}_{\overrightarrow {e'}-1}(C,f))$.
It follows $\overline{\Sigma}_{\overrightarrow {e'}-1}(C,f)$ is reduced at $L$ (this is already proved in \cite{ref6}) and locally at $L$ the scheme $\overline{\Sigma}_{\overrightarrow {e'}-1}(C,f)$ is the scheme-theoretic intersection of the two schemes mentioned above.
Indeed, this equation is clear as sets.
Therefore the intersection of those two schemes also has a tangent space at $L$ of dimension equal to the dimension of the intersection of the schemes at $L$.
This implies also the intersection of those two schemes is already reduced at $L$.
We know from \cite{ref6} that the scheme $\overline{\Sigma}_{\overrightarrow {e}}(C,f)$ is reduced and it contains $\Sigma_{\overrightarrow {e'}-1}(C,f)$.
Also because of Definition \ref{definition6} we have $\overline{\Sigma}_{\overrightarrow {e}}(C,f)$ is a closed subscheme of the scheme defined by the $(g-d+r'(0))$-th Fitting ideal of $R^1\pi_*(\mathcal{P}_d)$.
It follows that locally at $L$ the scheme $\overline{\Sigma}_{\overrightarrow {e'}-1}(C,f)$ is the intersection of the scheme $\overline{\Sigma }_{\overrightarrow{e}}(C,f)$ and the scheme defined by the $(g-(d+k)+r'(1))$-th Fitting ideal of $R^1\pi_*(\mathcal{P}_d(1))$.

It follows that locally at $L$ the scheme $\overline{\Sigma}_{\overrightarrow {e'}-1}(C,f)$ is defined as the subscheme of $\overline{\Sigma}_{\overrightarrow {e}}(C,f)$ defined by the $(g-(d+k)+r'(1))$-th Fitting ideal of $R^1\pi_*(\mathcal{P}_d(1))\vert _{\overline{\Sigma}_{\overrightarrow {e}}(C,f)}$.
In \cite{ref1}, Chapter IV, Lemma 3.1, it is proved that this sheaf is equal to $R^1\pi_*(\mathcal{P}_d(1)\vert _{\Sigma _{\overrightarrow {e}}(C,f)})$.
Now we can use the result of Proposition \ref{prop2}, especially Remark \ref{rem1} showing that locally at $[L]$ the scheme $\overline{\Sigma}_{\overrightarrow {e'}-1}(C,f)$ is locally defined by $\codim _{\overline{\Sigma}_{\overrightarrow{e}}(C,f)}\overline{\Sigma}_{\overrightarrow {e'}-1}(C,f)$ equations.
Since the scheme $\overline{\Sigma}_{\overrightarrow {e'}-1}(C,f)$ is  smooth along $\Sigma_{\overrightarrow {e'}-1}(C,f)$ it follows that also the scheme $\overline{\Sigma}_{\overrightarrow {e}}(C,f)$ is smooth at $L$ (its tangent space needs to have dimension $\dim (\overline{\Sigma}_{\overrightarrow {e}}(C,f))$ at $L$).

Remember that $\overline{\Sigma}_{\overrightarrow {e}}(C,f)$ is an irreducible component of $W^{v-1}_d(C)$.
The tangent space of $W^{v-1}_d(C)$ at $L\in \Sigma _{\overrightarrow {e'}-1}(C,f)$ is the tangent space at $L$ of the scheme defined by $(g-d+r'(0))$-th Fitting ideal of $R^1\pi_*(\mathcal{P}_d)$.
Now let $r(i)$ be defined by $\overrightarrow {e}$.
Since $u \neq 0$ we have $r'(0)=r(0)$ and $r'(i)>r(i)$ in case $0 < i < b'$.
Then it follows from \cite{ref9}, Chapter IV, Proposition (4.2) (ii), that the tangent space at $L \in \Sigma_{\overrightarrow {e'}-1}(C,f)$ of the scheme defined by the $(g-(d+ki)+r'(i))$-th Fitting ideal of $R^1 \pi_*(\mathcal{P}_d(i))$ for $0 < i <b'$ is equal to $T_L(\Pic ^d(C))$.
This implies that at such point the tangent space $T_L(\overline{\Sigma}_{\overrightarrow {e}}(C,f)$ is equal to $T_ L(W^{v-1}_d(C))$.
This implies $\dim (T_L(W^{v-1}_d(C)))=\rho ^{v-1}_d(g)$, hence $W^{v-1}_d(C)$ is smooth at $L$.
\end{proof}

\begin{remark}\label{rem7}
The proof of the smoothness of $\overline{\Sigma}_{\overrightarrow {e}}(C,f)$ in Proposition \ref{prop7} is inspired by the proof of Proposition 9.1 in \cite{ref6}.
It would be nice to have a similar argument for the scheme $W^{v-1}_d(C)$ but in order to use Proposition \ref{prop2} we would have to know in advance that the scheme $W^r_d(C)$ is reduced along $\Sigma _{\overrightarrow {e'}-1}(C,f)$.
This is because of the use of Grauert's Theorem in the proof of Proposition \ref{prop2}.
This is a very subtle problem giving rise to interesting questions.

As an example consider the situation with $\overrightarrow{e}$ a splitting type of type $B(0,b,y,1,0)$.
As a set we have $\overline {\Sigma _{\overrightarrow{e}}(C,f)}$ is equal to $W^0_d(C)$.
In case $f$ is general then the scheme $\overline {\Sigma _{\overrightarrow{e}}(C,f)}=\overline {\Sigma} _{\overrightarrow{e}}(C,f)$ is reduced while from \cite{ref9}, Chapter IV, Corollary 4.5, we know also $W^0_d(C)$ as a scheme is reduced, so both schemes are equal.
This scheme $\overline{\Sigma}_{\overrightarrow {e}}(C,f)$ is an irreducible component of type I and for all $1 \leq c \leq b-2$ we have $\overline{\Sigma}_{\overrightarrow {e}+c}(C,f)$ is an irreducible component of $W^c_{d+kc}(C)$.
Again from \cite{ref6} we know the scheme $\overline{\Sigma}_{\overrightarrow {e}+c}(C,f)$ is reduced and therefore $\overline{\Sigma}_{\overrightarrow {e}+c}(C,f) = W^0_d(C)+cM \subset \Pic^{d+ck}(C)$ as schemes.
However we only know $\overline{\Sigma}_{\overrightarrow{e}+c}(C,f)$ is a closed subscheme of $W^c_{d+kc}(C)$.
In case $k$ is small with respect to $g$ and $c$ is sufficiently small then, as a set, $W^c_{d+kc}(C)=\overline{\Sigma}_{\overrightarrow {e}+c}(C,f)$.
Is it true that for general $f$ one has the equality $W^c_{d+kc}(C)=\overline{\Sigma}_{\overrightarrow {e}+c}(C,f)=W^0_d(C)+c[M]$ as schemes in this case?
For $k=2$, the hyperelliptic case, this is proved only quite recently (for any hyperelliiptic curve) in the appendix of \cite{ref15} (see also Proposition 4.14 in \cite{ref16}).
\end{remark}

\textbf {Acknowledgement.} I want to thank some people for some communication concerning (parts of) this paper: Nero Budur, An-Khuong Doan, Gerriet Martens and Hans Schoutens.
In particular I want to thank Hannah Larson for some correspondence we had on the subject of this paper.

\end{document}